\newcommand{\norm}[1]{||#1||}
\begin{document}
\setlength{\baselineskip}{16pt}

\parindent 0.5cm
\evensidemargin 0cm \oddsidemargin 0cm \topmargin 0cm \textheight
22cm \textwidth 16cm \footskip 2cm \headsep 0cm

\newtheorem{theorem}{Theorem}[section]
\newtheorem{lemma}[theorem]{Lemma}
\newtheorem{proposition}[theorem]{Proposition}
\newtheorem{definition}{Definition}[section]
\newtheorem{example}{Example}[section]
\newtheorem{corollary}[theorem]{Corollary}

\newtheorem{remark}{Remark}[section]
\newtheorem{property}[theorem]{Property}
\numberwithin{equation}{section}
\newtheorem{mainthm}{Theorem}
\newtheorem{mainlem}{Lemma}

\numberwithin{equation}{section}

\def\p{\partial}
\def\I{\textit}
\def\R{\mathbb R}
\def\C{\mathbb C}
\def\u{\underline}
\def\l{\lambda}
\def\a{\alpha}
\def\O{\Omega}
\def\e{\epsilon}
\def\ls{\lambda^*}
\def\D{\displaystyle}
\def\wyx{ \frac{w(y,t)}{w(x,t)}}
\def\imp{\Rightarrow}
\def\tE{\tilde E}
\def\tX{\tilde X}
\def\tH{\tilde H}
\def\tu{\tilde u}
\def\d{\mathcal D}
\def\aa{\mathcal A}
\def\DH{\mathcal D(\tH)}
\def\bE{\bar E}
\def\bH{\bar H}
\def\M{\mathcal M}
\renewcommand{\labelenumi}{(\arabic{enumi})}

\def\disp{\displaystyle}
\def\undertex#1{$\underline{\hbox{#1}}$}
\def\card{\mathop{\hbox{card}}}
\def\sgn{\mathop{\hbox{sgn}}}
\def\exp{\mathop{\hbox{exp}}}
\def\OFP{(\Omega,{\cal F},\PP)}
\newcommand\JM{Mierczy\'nski}
\newcommand\RR{\ensuremath{\mathbb{R}}}
\newcommand\CC{\ensuremath{\mathbb{C}}}
\newcommand\QQ{\ensuremath{\mathbb{Q}}}
\newcommand\ZZ{\ensuremath{\mathbb{Z}}}
\newcommand\NN{\ensuremath{\mathbb{N}}}
\newcommand\PP{\ensuremath{\mathbb{P}}}
\newcommand\abs[1]{\ensuremath{\lvert#1\rvert}}

\newcommand\normf[1]{\ensuremath{\lVert#1\rVert_{f}}}
\newcommand\normfRb[1]{\ensuremath{\lVert#1\rVert_{f,R_b}}}
\newcommand\normfRbone[1]{\ensuremath{\lVert#1\rVert_{f, R_{b_1}}}}
\newcommand\normfRbtwo[1]{\ensuremath{\lVert#1\rVert_{f,R_{b_2}}}}
\newcommand\normtwo[1]{\ensuremath{\lVert#1\rVert_{2}}}
\newcommand\norminfty[1]{\ensuremath{\lVert#1\rVert_{\infty}}}

\title{Asymptotic behavior of semilinear parabolic equations on the circle with time almost-periodic/recurrent dependence}

\author {
\\
Wenxian Shen\\
Department of Mathematics and Statistics\\
 Auburn University, Auburn, AL 36849, USA
\\
\\
Yi Wang\thanks{Partially supported by NSF of China No.11371338, 11471305, Wu Wen-Tsun Key Laboratory and the Fundamental
Research Funds for the Central Universities.}  $\,\,$ and $\,$ Dun Zhou\thanks{Partially supported by NSF of China No.11601498, China Postdoctoral Science Foundation No. 2016M600480 and Wu Wen-Tsun Key Laboratory.}\\
School of Mathematical Science\\
 University of Science and Technology of China
\\ Hefei, Anhui, 230026, P. R. China
\\
\\
}
\date{}

\maketitle
% insert the table of contents
%\tableofcontents

%---------------------SECTION DIVIDE LINE---------------------------
\begin{abstract}
We study topological structure of the $\omega$-limit sets of the skew-product semiflow generated by the following scalar reaction-diffusion equation
\begin{equation*}
u_{t}=u_{xx}+f(t,u,u_{x}),\,\,t>0,\,x\in S^{1}=\mathbb{R}/2\pi \mathbb{Z},
\end{equation*}
where $f(t,u,u_x)$ is $C^2$-admissible with time-recurrent structure including almost-periodicity and almost-automorphy. Contrary to the time-periodic cases (for which any $\omega$-limit set can be imbedded into a periodically forced circle flow), it is shown that one cannot expect that any $\omega$-limit set can be imbedded into an almost-periodically forced circle flow even if $f$ is uniformly almost-periodic in $t$.

More precisely, we prove that, for a given $\omega$-limit set $\Omega$, if ${\rm dim}V^c(\O)\le 1$ ($V^c(\O)$ is the center space associated with $\O$), then $\Omega$ is either spatially-homogeneous or spatially-inhomogeneous; and moreover, any spatially-inhomogeneous $\O$ can be imbedded into a time-recurrently forced circle flow (resp. imbedded into an almost periodically-forced circle flow if $f$ is uniformly almost-periodic in $t$). On the other hand, when ${\rm dim}V^c(\O)>1$, it is pointed out that the above embedding property cannot hold anymore. Furthermore, we also show the new phenomena of the residual imbedding into a time-recurrently forced circle flow (resp. into an almost automorphically-forced circle flow if $f$ is uniformly almost-periodic in $t$) provided that $\dim V^c(\Omega)=2$ and $\dim V^u(\Omega)$ is odd. All these results reveal that for such system there are essential differences between time-periodic cases and non-periodic cases.
\end{abstract}

\section{Introduction}

In this paper we consider the following scalar reaction-diffusion equation on the circle
$S^{1}=\mathbb{R}/2\pi \mathbb{Z}$:
\begin{equation}\label{equation-1}
u_{t}=u_{xx}+f(t,u,u_{x}),\,\,t>0,\,x\in S^{1},
\end{equation}
 where $f=f(t,u,u_x)$ is $C^2$-admissible and time-recurrent in $t$ including time-periodic, almost periodic and almost automorphic phenomena as special cases (see Definition \ref{admissible}) .

There are already many works concerning with the long time behavior of bounded solutions of \eqref{equation-1} in autonomous or time-periodic cases (see, e.g. \cite{Chen1989160,CRo,JR1,JR2,Massatt1986,Matano,SF1}). However, in practical problems, large quantities of systems evolve influenced by external effects which are roughly but not exactly periodic, or under environmental forcing which exhibits different, non-commensurate periods. Thus, using  quasi-periodic or almost periodic equations, or even certain nonautonomous
equations to characterized models with such time dependence are more appropriate. Based on these, we are trying to portray
the long time behavior of bounded solutions  of \eqref{equation-1} with time-recurrent structures including almost periodicity and almost automorphy, which  boils down to the problem of understanding the structure of  $\omega$-limit sets of
the skew-product semiflow generated by \eqref{equation-1}.

To be more precise, let $f(t,u,p)\in C(\mathbb{R}\times \mathbb{R} \times \mathbb{R},\mathbb{R})$ be a $C^2$-admissible function. Then $f_{\tau}(t,u,p)=f(t+\tau,u,p)(\tau \in \RR)$ generates a family
$\{f_{\tau}|\tau \in \mathbb{R}\}$ in the space of continuous functions $C(\mathbb{R}\times \mathbb{R} \times \mathbb{R},\mathbb{R})$ equipped with the compact open topology. The closure $H(f)$ of $\{f_{\tau}|\tau\in \mathbb{R}\}$ in
the compact open topology, called the hull of $f$, is a compact metric space and every $g\in H(f)$ has the same regularity as $f$. Thus, the time-translation $g\cdot t\equiv g_{t}\,(g\in H(f))$ defines a compact flow on $H(f)$.  We further assume that $f$ is {\it time-recurrent} or, in other words, the flow on $H(f)$ is minimal. This means that $H(f)$ is a minimal set of the flow, that is, it is the only nonempty compact subset of itself that is invariant under the flow $g\cdot t$. This is true, for instance, when $f$ is a uniformly almost periodic or, more generally, a uniformly almost automorphic function (Definition \ref{almost}).

Equation \eqref{equation-1} naturally
induces a family of equations associated to each $g\in H(f)$,
\begin{equation}\label{equation-lim1}
u_{t}=u_{xx}+g(t,u,u_{x}),\,\,\quad t> 0,\quad x\in S^{1}.
\end{equation}
To understand the long time behavior of bounded solutions of \eqref{equation-1}, we study the long time behavior of bounded solutions of \eqref{equation-lim1} for any $g\in H(f)$.
Assume that $X$ is the fractional power space associated with the operator $u\rightarrow
-u_{xx}:H^{2}(S^{1})\rightarrow L^{2}(S^{1})$ satisfies $X\hookrightarrow C^{1}(S^{1})$ (that is, $X$ is compact embedded in $C^{1}(S^{1})$). For any $u\in X$, \eqref{equation-lim1} defines (locally) a unique solution $\varphi(t,\cdot;u,g)$ in $X$ with $\varphi(0,\cdot;u,g)=u(\cdot)$ and it continuously depends on $g\in H(f)$ and $u\in
X$. Consequently, \eqref{equation-lim1} admits a (local) skew-product semiflow $\Pi^{t}$ on $X\times
H(f)$:
\begin{equation}\label{equation-lim2}
\Pi^{t}(u,g)=(\varphi(t,\cdot;u,g),g\cdot t),\quad t\ge 0.
\end{equation}
It follows from \cite{Hen} (see also \cite{Hess,Mierczynski}) and the standard a priori estimates for parabolic equations, if
$\varphi(t,\cdot;u,g) (u\in X)$ is bounded in $X$ in the existence interval of the solution, then $u$ is a globally defined classical solution. Note that, for any $\delta>0$, $\{\varphi(t,\cdot;u,g):t\ge \delta\}$ is relatively compact in $X$. Consequently, the $\omega$-limit set $\omega(u,g)$ of the bounded semi-orbit $\Pi^t(u,g)$ in $X\times H(f)$ is a nonempty connected compact subset of $X\times H(f)$.
%It is further known that $\Pi^{t}$ on the $\omega$-limit set $\omega(u,g)$ has a unique continuous backward time extension (see, e.g. \cite{hale1988asymptotic}). Therefore, it defines a usual skew-product flow on $\omega(u,g)$.
The study of the long time behavior of the bounded solution $\varphi(t,\cdot;u,g)$ of \eqref{equation-lim1}  then boils down to the problem of understanding the structure of the $\omega$-limit set $\omega(u,g)$.

For the autonomous case or, equivalently, if $H(f)=\{f\}$, it is already known that any $\omega$-limit set $\omega(u)$ can be embedded into $\mathbb{R}^2$ (cf. the Poincar\'{e}-Bendixson type Theorem by Fiedler and Mallet-Paret \cite{Fiedler}; see also in \cite{Fiedler1}); and moreover, for \eqref{equation-1}, $\omega(u)$ is either a rotating wave, or contained in a set of equilibria differing only by phase shift in $x$ (see Massatt \cite{Massatt1986} or Matano \cite{Matano}).
In the case that $f$ is time-periodic with period $1$  (or equivalently, $H(f)$ is homeomorphic to the circle $\mathcal{T}^1=\mathbb{R}/\mathbb{Z}$), one may typically track the asymptotic behavior of bounded solutions by considering the $\omega$-limit set $\omega_P(u)$ of the associated Poincar\'{e} map $P$ defined as the time one map $P:u\mapsto \varphi(1,\cdot;u,f)$. For such Poincar\'{e} map $P$, any $\omega$-limit set $\omega_P(u)$ can be embedded into $\mathbb{R}^2$ (Tere\v{s}\v{c}\'{a}k \cite{Te} or Pol\'{a}\v{c}ik \cite{Pola}).

Sandstede and Fiedler \cite{SF1} studied the time-periodic equation \eqref{equation-1} and showed that the Poincar\'{e} map $P$ induces on any $\omega_P(u)$ a linear shift-map given by some $x$-shift $\sigma_r$, where $\sigma_r$ denote the $S^1$-action on $u\in X$ induced by shifting $x$ as $(\sigma_ru)(\cdot):=u(\cdot+r).$ Depending on whether $2\pi/r$ is rational or irrational, this induced map is periodic or ergodic. In the terminology of skew-product semiflow \eqref{equation-lim2}, the remarkable result of Sandstede and Fiedler \cite{SF1} can be reformulated as: any $\omega$-limit set $\omega(u,g)$ can be viewed as a subset of the two-dimensional torus $\mathcal{T}^1\times S^1$ carrying a linear flow (see Sandstede \cite{Sandstede}); in other words,
 $\omega(u,g)$ is imbedded into a $\mathcal{T}^1$-periodically forced circle flow on $S^1$.

 The present paper is devoted to the investigation of the
  topological structure of the $\omega$-limit set $\omega(u,g)$ of \eqref{equation-1} in time-recurrent cases including almost periodicity and almost automorphy. Based on the phenomena in autonomous and time-periodic cases (\cite{Massatt1986,Matano,SF1}), a natural general problem is:
  \begin{itemize}
 \item[{\bf (P)}]  {\it For the time-recurrent system \eqref{equation-1}, whether any $\omega(u,g)$ can be imbedded into an $H(f)$-time-recurrently forced circle flow on $S^1$?  In particular, when $f$ is uniformly almost periodic in $t$, whether $\omega(u,g)$ can be imbedded into an almost periodically forced circle flow on $S^1$?}
 \end{itemize}
Unfortunately, our example in the Appendix of this paper immediately indicates that it is not correct even for time almost periodic cases. This reveals that on this problem there are certain essential differences between time-periodic cases and non-periodic cases.

 As a consequence, it then comes out an interesting question that under what condition $\omega(u,g)$ can be imbedded into an $H(f)$-time-recurrently forced circle flow on $S^1$. In this paper, we will first try to answer this question via connecting this question to the dimension of the center space $V^c(\omega(u,g))$  associated with $\omega(u,g)$. More precisely, let $(u,g)\in X\times H(f)$ be such that the motion $\Pi^{t}(u,g)$($t\ge 0$) is bounded. Let also $\Omega=\omega(u,g)$.  Then, among others, the following results are obtained in this paper:

\begin{itemize}
 \item[(i)] (see  Theorem \ref{hyperbolic0}) {\it Assume that $\dim V^c(\Omega)=0$ (i.e., $\Omega$ is hyperbolic), then
$\Omega$ is a spatially-homogeneous  $1$-cover of $H(f)$.}

 \item[(ii)] (see Theorem \ref{norma-hyper}) A{\it ssume that $\dim V^c(\Omega)=1$. Then $\Omega$ is either spatially-homogeneous or spatially-inhomogeneous (see Definition \eqref{D:homo-inhomo}). Moreover, any spatially-inhomogeneous $\O$ can be imbedded into an $H(f)$-time-recurrently forced circle flow on $S^1$ (resp. imbedded into an almost periodically forced circle flow on $S^1$  provided that $f$ is uniformly almost-periodic in $t$).}
\end{itemize}

Conclusions (i)-(ii) indicate that, when $\dim V^c(\Omega)\le 1$, $\Omega$ is either spatially-homogeneous or spatially-inhomogeneous; and moreover, {\bf (P)} is indeed correct for any spatially-inhomogeneous $\O$ automatically when $\dim V^c(\Omega)\le 1$. On the other hand,  a careful examination yields that the counter example in the Appendix admits $\dim V^c(\Omega)=2$ (see Remark A.1(i)), which means that one can not always expect  {\bf (P)} to hold anymore when $\dim V^c(\O)>1$.

We can further characterize the structure of $\O$ under the condition that $\dim V^c(\Omega)=2$ and the dimension of the unstable space $V^u(\Omega)$ associated with $\O$ is odd. More precisely, for $u\in M\subset X$, let $\Sigma u=\{\sigma_a u\,|\, a\in S^1\}$ (resp. $\Sigma M=\cup_{u\in M}\Sigma u$) be the $S^1$-group orbit of $u$ (resp. of $M$). Then we prove

\begin{itemize}
\item[(iii)]  (see Theorem \ref{structure-thm}) {\it  Assume that $\dim V^c(\Omega)=2$ and $\dim V^u(\Omega)$ is odd. Then

(a) Either $\Sigma M_1=\Sigma M_2$ or
$\Sigma M_1\cap\Sigma M_2=\emptyset$, for any two minimal subsets $M_1,M_2\subset  \Omega$;

(b) $\O$ contains at most two minimal sets $M_1$ and $M_2$ with $\Sigma M_1\cap \Sigma M_2= \emptyset$;

(c) Given any minimal set $M\subset \Omega$, $\Omega\cap \Sigma M$ can be {\rm residually imbedded} into an $H(f)$-time-recurrently forced circle flow on $S^1$ (resp. imbedded into an {\rm almost automorphically} forced circle flow on $S^1$  if $f$ is almost periodic in $t$).}
\end{itemize}
Conclusion (iii) reveals that, for higher dimensional center space $\dim V^c(\Omega)$, the structure of the $\omega$-limit set $\O$ can be more complicated; and moreover, residually imbedding and almost automorphically forced circle flow may occur.

The above main results (i)-(iii) are generalizations from autonomous and time-periodic cases (\cite{Massatt1986,Matano,SF1}) to general systems with time-recurrent structure which includes almost periodicity and almost automorphy. It
also deserves to point out that an almost periodically (automorphically) forced circle
flow has interesting and fruitful dynamical behavior (see, e.g. \cite{HuYi,Yi} and the references therein). The new phenomena (i)-(iii) we
discovered here reinforce the appearance of the almost periodically (automorphically) forced circle
flow on the $\omega$-limit set $\O$ of
the infinite-dimensional dynamical systems generated by evolutionary equations.

Here, we also mention that, for time almost-periodic system \eqref{equation-1},  the topological structure of the minimal sets (i.e., the simplest $\omega$-limit sets) has been investigated by the present authors in \cite{SWZ} very recently. Moreover, for the reflection-symmetric nonlinearity $f(t,u,u_x)=f(t,u,-u_x)$ in \eqref{equation-1}, one may refer to the work by Chen and Matano \cite{Chen1989160} for time-periodic cases and the work by Shen et.al \cite{SWZ2} for time almost-periodic cases.

The present paper is organized as follows. In section 2, we summarize preliminary materials to be used in our proofs which include some conceptions of dynamic systems, almost-periodic (almost-automorphic) functions, properties of zero number function of the linearized system associated with \eqref{equation-1}, as well as the invariant manifolds theory for skew-product semiflows. In section 3, we list some properties of invariant sets of \eqref{equation-lim2}.  In section 4, we introduce the skew-product seimiflows  $\tilde \Pi^t$ on the quotient space induced by the spatial-shift  and present  some basic properties of $\tilde \Pi^t$. In section 5, we  present the main results of this paper, Theorems \ref{hyperbolic0}-\ref{structure-thm}. We first study the general structure of the $\omega$-limit set $\O$ for \eqref{equation-1} with $\dim V^c(\Omega)\leq 1$ or $\dim V^c(\Omega)=2$ and $\dim V^u(\Omega)$ being odd  and prove Theorem \ref{structure-thm}, and then  further study the  $\omega$-limit set $\Omega$ with $\dim V^c(\Omega)\leq 1$ and prove Theorem \ref{norma-hyper} and Theorem \ref{hyperbolic0}, respectively.

\section{Preliminaries}

In this section, we introduce some conceptions, notations and properties which will be often used in the later sections (cf. \cite{SWZ,SWZ2}).

\subsection{Some conceptions of dynamic systems}
%{\bf (DZ: I have added this subsection to introduce "distal, almost 1-cover...")}
Let $Y$ be a compact metric space with metric $d_{Y}$, and
$\sigma:Y\times \RR\to Y, (y,t)\mapsto y\cdot t$ be a continuous
flow on $Y$, denoted by $(Y,\sigma)$ or $(Y,\RR)$. A pair
$y_1,y_2$ of different elements of $Y$ are said to be {\it positively proximal} (resp. {\it negatively proximal}), if there is $t_n\to\infty$ (resp. $t_n\to-\infty$) as $n\to\infty$ such
that $d_Y(y_1\cdot t_n,y_2\cdot t_n)\to 0$, the pair $y_1,y_2$ is called {\it two sided proximal} if it is both a positively and negatively proximal pair.

Let $(Y,\mathbb{R})$, $(Z,\mathbb{R})$ be two continuous compact flows. $Z$ is called a {\it $1$-cover} ({\it almost $1$-cover}) of $Y$ if there is an onto flow homomorphism $p:Z\to Y$ such that $p^{-1}(y)$ is a singleton for any $y\in Y$ (for at least one $y\in Y$). Moreover, if $Z$ is an almost $1$-cover of $Y$, it is also called an {\it almost automorphic extension} of $Y$. Here $(Y,\mathbb{R})$ is called an {\it factor} of $(Z,\mathbb{R})$.

\subsection{Almost periodic (automorphic) functions and almost periodically (automorphically) forced circle flows}

Let $D$ be a subset of $\RR^m$. We list the following definitions and notations in this subsection.
\begin{definition}\label{admissible}
A function $f\in C(\RR\times D,\RR)$ is said to be {\it
admissible} if for any compact subset $K\subset D$, $f$ is bounded and uniformly continuous on
$\RR\times K$. $f$ is $C^r$ ($r\ge 1$) {\it admissible} if $f$ is $C^r$ in $w\in D$ and Lipschitz in $t$, and $f$ as well as its partial derivatives to order $r$ are admissible.
\end{definition}

Let $f\in C(\RR\times D,\RR)$ be an admissible function. Then
$H(f)={\rm cl}\{f\cdot\tau:\tau\in \RR\}$ (called the {\it hull of
$f$}) is compact and metrizable under the compact open topology (see \cite{Sell,Shen1998}), where $f\cdot\tau(t,\cdot)=f(t+\tau,\cdot)$. Moreover, the time translation $g\cdot t$ of $g\in H(f)$ induces a natural
flow on $H(f)$ (cf. \cite{Sell}).

\begin{definition}\label{almost}
{\rm
\begin{itemize}
\item[(1)] A function $f\in C(\RR,\RR)$ is {\it recurrent} if $H(f)$ is minimal under the time translation flow $(t,g)\mapsto g\cdot t$ for $t\in\RR$ and $g\in H(f)$.

\item[(2)] A function $f\in C(\RR,\RR)$ is {\it almost automorphic} if for every $\{t'_k\}\subset\mathbb{R}$ there is a subsequence $\{t_k\}$
and a function $g:\mathbb{R}\to \mathbb{R}$ such that $f(t+t_k)\to g(t)$ and $g(t-t_k)\to f(t)$ pointwise.

\item[(3)]   $f$ is {\it almost periodic} if for any sequence $\{t_n\}$ there is a subsequence $\{t_{n_k}\}$ such that $\{f(t+t_{n_k})\}$ converges uniformly.

\item[(4)] A function $f\in C(\RR\times D,\RR)(D\subset \RR^m)$ is {\it uniformly recurrent in $t$} (resp. {\it uniformly almost automorphic in $t$}, {\it  uniformly almost periodic in $t$}) , if $f$ is both admissible and, for each fixed $d\in D$, $f(t,d)$ is  recurrent (resp. almost automorphic, almost periodic)  with respect to $t\in \RR$.
\end{itemize}
}
\end{definition}

\begin{remark}\label{a-p-to-minial}
{\rm  If $f$ is a uniformly almost periodic (automorphic) function in $t$, then $H(f)$ is always {\it minimal}, we call $(H(f),\mathbb{R})$ an almost periodic (automorphic) minimal flow.
 Moreover,  $g$ is a uniformly almost periodic (automorphic) function for all  (residually many)
$g\in H(f)$ (see, e.g. \cite{Shen1998}).}
\end{remark}
\begin{definition}
{\rm
Let $(Y,\sigma)$ be a flow on the compact metric space $Y$.  A skew-product circle flow  $\Lambda^t:S^1\times Y\rightarrow S^1\times Y$
is a skew-product flow of the following form
 \begin{equation}\label{skew-product-circleflow}
 \Lambda^{t}(u,y)=(\varphi(t,u,y),y\cdot t),\quad t\in\mathbb{R},\, (u,y)\in S^1\times Y.
 \end{equation}
 If $(Y,\sigma)$ is a (an almost periodic or almost automorphic) minimal flow, then $\Lambda^t$ is called {\it a time recurrently} ({\it an almost periodically or almost automorphically}) forced circle flow.
}
\end{definition}

\subsection{Zero number function}
We now recall the zero number function on $S^1$ and list some related properties.

Given a $C^{1}$-smooth function $u:S^{1}\rightarrow \mathbb{R}$, the zero number of $u$ is
 defined as
$$z(u(\cdot))={\rm card}\{x\in S^{1}|u(x)=0\}.$$
The following key lemma describes the behavior of the zero number for linear non-autonomous parabolic equations and was originally presented in \cite{2038390,H.MATANO:1982} and improved in \cite{Chen98}.
\begin{lemma}\label{zero-number}
Let $\varphi(t,\cdot)$ be a classical nontrivial solution of
\begin{equation}
\begin{cases}
\varphi_{t}=a(t,x)\varphi_{xx}+b(t,x) \varphi_{x}+c(t,x)\varphi,\quad x\in S^1,\\
\varphi_{0}=\varphi(0,\cdot)\in H^{1}(S^{1}),
\end{cases}
\end{equation}
where $a,a_t,a_x,b$ and $c$ are bounded continuous functions, $a\ge \delta >0$. Then the following properties hold.
\par
{\rm (a)} $z(\varphi(t,\cdot))<\infty$ for $t>0$ and is non-increasing in $t$.\par
{\rm (b)}  $z(\varphi(t,\cdot))$ can drop only at $t_{0}$ such that $\varphi(t_{0},\cdot)$ has a
multiple zero on $S^{1}$.\par
{\rm (c)}  $z(\varphi(t,\cdot))$ can drop only finite many times, and there exists a $T>0$
such that $\varphi(t,\cdot)$ has only simple zeros on $S^{1}$ as $t\geq T$(hence
$z(\varphi(t,\cdot))=\mathrm{constant}$ as $t\geq T$).
\end{lemma}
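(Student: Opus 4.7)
The plan is to prove (a), (b), (c) together via a careful local analysis of the nodal set of $\varphi$ in space-time. The main picture is that, at any instant $t>0$, the zeros of $\varphi(t,\cdot)$ form a discrete (hence finite, by compactness of $S^1$) subset of $S^1$; between consecutive ``drop times'' each zero moves continuously as a simple root, while at drop times a finite-order zero bifurcates and strictly loses nodal points.

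First I would establish that $\varphi(t,\cdot)$ cannot have a zero of infinite order at any point $(t_0,x_0)$ with $t_0>0$ unless $\varphi\equiv 0$. This is a backward-uniqueness / unique-continuation property for the linear parabolic operator, available precisely under the regularity assumptions $a\geq\delta>0$ with $a,a_t,a_x,b,c$ bounded continuous; it is the technical content sharpened by Chen \cite{Chen98} beyond the earlier work of Angenent and Matano. Together with the fact that on compact $S^1$ an isolated zero set is automatically finite, this yields $z(\varphi(t,\cdot))<\infty$ for every $t>0$, giving the first half of (a).

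Next, I would prove monotonicity and the characterization in (b) by a local-model argument. Fix $(t_0,x_0)$ with $\varphi(t_0,x_0)=0$. If $\varphi_x(t_0,x_0)\neq 0$ (simple zero), the implicit function theorem provides a unique $C^1$ curve $x=\xi(t)$ of zeros of $\varphi$ through $(t_0,x_0)$ and no other zeros of $\varphi$ lie in a neighborhood; consequently the zero count is locally constant across $t_0$. If instead $\varphi$ has a zero of multiplicity $m\geq 2$ at $(t_0,x_0)$, I would Taylor-expand $\varphi$ in $x$ about $x_0$ and use the PDE to identify the leading polynomial block; the corresponding local solution model (a suitable polynomial-in-$x$, linear-in-$t$ solution of the principal part $\partial_t - a(t_0,x_0)\partial_x^2$) shows that $\varphi(t,\cdot)$ has at least $m$ zeros in a neighborhood of $x_0$ for $t$ slightly less than $t_0$, and strictly fewer than $m$ for $t$ slightly greater than $t_0$. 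This gives the global non-increase of $z$ and forces any drop to occur precisely at a multiple zero, which is (a) and (b).

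Finally, for (c), I would combine the previous two observations. Away from drop times, the simple zeros are $C^1$ curves that cannot cross (any crossing would instantaneously create a multiple zero, already ruled out between drops), so $z(\varphi(t,\cdot))$ is locally constant. The set of drop times is therefore discrete in $(0,\infty)$, and since $z$ is non-negative integer-valued and strictly decreases at every drop time, there can be at most finitely many drops on any interval $[\tau,\infty)$ with $\tau>0$. Choosing $T$ past the final drop, $\varphi(T,\cdot)$ has only simple zeros, which persist as disjoint smooth nodal curves for all $t\geq T$, giving the eventual constancy asserted in (c). The main obstacle in this program is the local analysis at a multiple zero: rigorously justifying the polynomial splitting of $\varphi$ and the strict-drop conclusion when the coefficients are merely bounded and continuous (not smooth) is delicate, and this is the step that relies essentially on the regularity hypotheses $a_t,a_x\in L^\infty$ and the Angenent--Chen improvements of the classical Sturm zero-number theory.
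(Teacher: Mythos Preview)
The paper does not give its own proof of this lemma; it is stated as a known result with references to Angenent, Matano, and Chen. Your sketch correctly outlines the approach in those references---finite-order vanishing via unique continuation, implicit-function tracking of simple zeros, and the heat-polynomial local model at multiple zeros---so there is nothing to compare and no gap to flag.
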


\begin{corollary}\label{difference-lapnumber}
For any $g\in H(f)$, let $\varphi(t,\cdot;u,g)$ and $\varphi(t,\cdot;\hat{u},g)$ be two
distinct solutions of {\rm (\ref{equation-lim1})} on
$ \mathbb{R}^+$. Then

{\rm (a)} $z(\varphi(t,\cdot;u,g)-\varphi(t,\cdot;\hat{u},g))<\infty$ for $t>0$ and is non-increasing in t;

{\rm (b)} $z(\varphi(t,\cdot;u,g)-\varphi(t,\cdot;\hat{u},g))$
strictly decreases at $t_0$ such that the function $\varphi(t_0,\cdot;u,g)-\varphi(t_0,\cdot;\hat{u},g)$ has a multiple
zero on $S^1$;

{\rm (c)} $z(\varphi(t,\cdot;u,g)-\varphi(t,\cdot;\hat{u},g))$ can drop only finite many times, and there exists a $T>0$ such that  $$z(\varphi(t,\cdot;u,g)-\varphi(t,\cdot;\hat{u},g))\equiv
\textnormal{constant}$$ for all $t\ge T$.

\end{corollary}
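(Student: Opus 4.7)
The plan is to deduce this corollary directly from Lemma \ref{zero-number} by linearizing the equation \eqref{equation-lim1} along the segment joining the two solutions, so that the difference $w(t,x):=\varphi(t,x;u,g)-\varphi(t,x;\hat u,g)$ becomes a nontrivial classical solution of a linear parabolic equation of exactly the type covered by Lemma \ref{zero-number}.

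Concretely, I would introduce $F(s):=g(t,\hat\varphi+s w,\hat\varphi_x+s w_x)$ and apply the fundamental theorem of calculus, obtaining
\[
g(t,\varphi,\varphi_x)-g(t,\hat\varphi,\hat\varphi_x)=b(t,x)\,w_x+c(t,x)\,w,
\]
where
\[
b(t,x):=\int_0^1 g_p(t,\hat\varphi+s w,\hat\varphi_x+s w_x)\,ds,\qquad c(t,x):=\int_0^1 g_u(t,\hat\varphi+s w,\hat\varphi_x+s w_x)\,ds.
\]
Subtracting the two copies of \eqref{equation-lim1} satisfied by $\varphi$ and $\hat\varphi$ then yields
\[
w_t=w_{xx}+b(t,x)\,w_x+c(t,x)\,w,\qquad x\in S^1,\ t>0,
\]
i.e.\ the linear equation in Lemma \ref{zero-number} with $a\equiv 1$.

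The next step is to verify the regularity hypotheses. Since $a\equiv 1$ one has $a,a_t,a_x$ trivially bounded continuous and $a\ge 1>0$. For $b$ and $c$, the $C^2$-admissibility of $g\in H(f)$ gives that $g_u,g_p$ are continuous and bounded on every compact subset of $\RR\times\RR\times\RR$; together with the embedding $X\hookrightarrow C^1(S^1)$ and the standard smoothing property of \eqref{equation-lim1} (used already in the discussion of $\Pi^t$), both $\varphi$ and $\hat\varphi$, as well as their $x$-derivatives, are continuous on $(0,\infty)\times S^1$ and uniformly bounded on every window $[\delta,T]\times S^1\subset(0,\infty)\times S^1$. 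Hence the integrands defining $b(t,x)$ and $c(t,x)$ are jointly continuous in $(s,t,x)$ and uniformly bounded on $[0,1]\times[\delta,T]\times S^1$, so $b,c$ are bounded continuous on every such $[\delta,T]\times S^1$. Applying Lemma \ref{zero-number} on $[\delta,T]$ and letting $\delta\downarrow 0$, $T\uparrow\infty$ gives assertions (a)--(c) verbatim.

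There is no real obstacle; the only mild point to check is that $w$ is truly a \emph{nontrivial} solution so that Lemma \ref{zero-number} applies. This follows from forward uniqueness for \eqref{equation-lim1}: if $w(t_0,\cdot)\equiv 0$ for some $t_0>0$, then $\varphi(t_0,\cdot;u,g)=\varphi(t_0,\cdot;\hat u,g)$, and treating both functions as solutions of \eqref{equation-lim1} with the same data at $t=t_0$ forces $u=\hat u$ by uniqueness, contradicting the distinctness hypothesis. With this in hand, the three conclusions of Lemma \ref{zero-number} applied to $w$ translate immediately into (a), (b) and (c) of the corollary.
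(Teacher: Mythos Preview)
Your proposal is correct and is exactly the standard linearization argument the paper has in mind: the corollary is stated immediately after Lemma~\ref{zero-number} without any proof, so the intended derivation is precisely to write $w=\varphi-\hat\varphi$, use the $C^2$-admissibility of $g$ to express the nonlinear difference as $b\,w_x+c\,w$ via the integral mean-value formula, and then invoke Lemma~\ref{zero-number} with $a\equiv 1$. Your check that $w$ is nontrivial via forward uniqueness is the right closing observation.
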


\begin{lemma}\label{zero-cons-local}
Let $u\in X$ be such that $u$ has only simple zeros on $S^1$, then there exists a $\delta>0$ such that for any $v\in X$ with $\|v\|<\delta$, one has
  \begin{equation*}
    z(u)=z(u+v).
  \end{equation*}
\end{lemma}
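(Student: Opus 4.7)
The plan is to exploit the standing assumption $X\hookrightarrow C^1(S^1)$ so that smallness in $X$ forces smallness in $C^1$, and then argue purely at the level of $C^1$-perturbations of $u$, using the finiteness of $z(u)$ and the nonvanishing of $u'$ at each zero.

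First I would enumerate the zeros. By compactness of $S^1$ and the hypothesis that every zero of $u$ is simple (so isolated), the zero set is finite, say $\{x_1,\ldots,x_n\}$ with $n=z(u)$. Set $m:=\min_{1\le i\le n}|u'(x_i)|>0$. By continuity of $u'$, pick pairwise disjoint closed arcs $I_i\subset S^1$ around $x_i$ on which $|u'(x)|\ge m/2$ and on which $u$ has a unique zero, namely $x_i$. Let $K:=S^1\setminus\bigcup_i \mathrm{int}(I_i)$; since $u$ has no zero on the compact set $K$, there is $\eta>0$ with $|u(x)|\ge\eta$ for all $x\in K$. Also let $\beta>0$ be the minimum of $|u|$ at the (finitely many) endpoints of the arcs $I_i$.

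Next, using the continuous embedding $X\hookrightarrow C^1(S^1)$, fix a constant $C>0$ with $\|w\|_{C^1}\le C\|w\|_X$ for every $w\in X$. Choose
\[
\delta:=\frac{1}{C}\min\!\Bigl\{\eta,\;\beta,\;\frac{m}{4}\Bigr\}.
\]
For any $v\in X$ with $\|v\|_X<\delta$ we then have $\|v\|_\infty<\eta$, $\|v\|_\infty<\beta$, and $\|v'\|_\infty<m/4$.

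Finally I would count the zeros of $u+v$. On $K$, $|u+v|\ge|u|-\|v\|_\infty\ge\eta-\eta=0$ strictly (with strict inequality by the choice $\|v\|_\infty<\eta$), so $u+v$ has no zero on $K$. On each arc $I_i$, $|(u+v)'(x)|\ge|u'(x)|-\|v'\|_\infty\ge m/2-m/4=m/4>0$, so $u+v$ is strictly monotone on $I_i$; in particular it has at most one zero there. Moreover at the two endpoints of $I_i$, $u$ takes values of opposite signs (since $u$ has a simple zero in the interior), each of modulus $\ge\beta$; because $\|v\|_\infty<\beta$, the values of $u+v$ at those endpoints still have opposite signs, so by the intermediate value theorem $u+v$ does have a zero in $I_i$, and that zero is simple. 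Adding up, $u+v$ has exactly $n$ zeros, each simple, i.e.\ $z(u+v)=n=z(u)$.

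The only real subtlety is the passage from $X$-smallness to $C^1$-smallness, which is handed to us for free by the standing embedding assumption $X\hookrightarrow C^1(S^1)$; without that one would have to work with weaker norms and the simultaneous control of $v$ and $v'$ would be the hard part.
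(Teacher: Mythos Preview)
Your argument is correct: the finiteness of the simple zero set, the uniform lower bound on $|u'|$ near each zero, the uniform lower bound on $|u|$ away from the zeros, and the embedding $X\hookrightarrow C^1(S^1)$ combine exactly as you describe to give persistence of each zero and no creation of new ones. The paper does not actually supply its own proof here but simply cites \cite[Corollary~2.1]{SF1} and \cite[Lemma~2.3]{Chen1989160}, where the same elementary $C^1$-perturbation argument is carried out.
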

\begin{proof}
See Corollary 2.1 in \cite{SF1} or Lemma 2.3 in \cite{Chen1989160}.
\end{proof}

The proof of the following lemma can be found in \cite[Lemma 2.4]{SWZ}.
\begin{lemma}\label{sequence-limit}
Fix $g,\ g_{0}\in H(f)$. Let $(u^{i},g)\in p^{-1}(g),(u_{0}^{i},g_{0})\in p^{-1}(g_{0})$  $ (i=1,\ 2,\ u^{1}\neq u^{2},\ u_{0}^{1}\neq u_{0}^{2})$ be such that $\Pi^{t}(u^{i},g)$ is defined on $\mathbb{R}^{+}$ (resp. $\mathbb{R}^-$) and $\Pi^{t}(u_{0}^{i},g_{0})$ is defined on $\mathbb{R}$. If there exists a sequence $t_{n}\rightarrow +\infty$ (resp. $s_{n}\rightarrow -\infty$) as $n\rightarrow \infty$, such that $\Pi^{t_{n}}(u^{i},g)\rightarrow (u_{0}^{i},g_{0})$ (resp. $\Pi^{s_{n}}(u^{i},g)\rightarrow (u_{0}^{i},g_{0})$) as $n\rightarrow \infty (i=1,2)$, then
$$z(\varphi(t,\cdot;u_{0}^{1},g_{0})-\varphi(t,\cdot;u_{0}^{2},g_{0}))\equiv \textnormal{constant},$$
for all $t\in \mathbb{R}$.
\end{lemma}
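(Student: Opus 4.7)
The plan is to propagate the zero-number bookkeeping from the approximating orbits to the limit orbits via two stability tools: the $C^1$-local stability of the zero count in Lemma \ref{zero-cons-local}, and the drop-structure of Corollary \ref{difference-lapnumber}. I work out the forward case $t_n\to+\infty$ and indicate the (analogous) modifications for the backward case at the end.

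Set
\[
\psi_n(t,\cdot):=\varphi(t_n+t,\cdot;u^1,g)-\varphi(t_n+t,\cdot;u^2,g),\quad \psi(t,\cdot):=\varphi(t,\cdot;u_0^1,g_0)-\varphi(t,\cdot;u_0^2,g_0).
\]
Continuity of the skew-product semiflow in $(u,g)\in X\times H(f)$, together with backward uniqueness used to cover the range $t<0$, gives $\psi_n(t,\cdot)\to\psi(t,\cdot)$ in $X\hookrightarrow C^1(S^1)$ for every fixed $t\in\mathbb{R}$. Corollary \ref{difference-lapnumber}(c) applied to $\varphi(\cdot;u^1,g)-\varphi(\cdot;u^2,g)$ supplies $T_1>0$ and $N\in\mathbb{Z}_{\ge 0}$ for which this difference has only simple zeros and $z$-count $N$ on $[T_1,\infty)$; since $t_n+t\ge T_1$ for large $n$, we get $z(\psi_n(t,\cdot))=N$ for every fixed $t$ and all large $n$. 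Applying the same corollary to the nontrivial $\psi$ yields $T_2$ and $M$ with $\psi(t,\cdot)$ having only simple zeros and $z(\psi(t,\cdot))=M$ for $t\ge T_2$, and Lemma \ref{zero-cons-local} at such a $t$ then forces $z(\psi_n(t,\cdot))=z(\psi(t,\cdot))$ for large $n$, hence $M=N$.

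It remains to extend $z(\psi(t,\cdot))\equiv N$ to every $t\in\mathbb{R}$. By Corollary \ref{difference-lapnumber}(a),(c), the map $t\mapsto z(\psi(t,\cdot))$ is non-increasing and drops only finitely often in any compact interval. By Corollary \ref{difference-lapnumber}(b), each drop time is a time at which $\psi(t,\cdot)$ has a multiple zero on $S^1$, which means that on every open inter-drop interval $\psi(t,\cdot)$ has only simple zeros and $z(\psi(t,\cdot))$ equals a constant $K$. Picking any $t$ in such a plateau and invoking Lemma \ref{zero-cons-local} gives $z(\psi_n(t,\cdot))=K$ for large $n$, and comparison with the previous step forces $K=N$. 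Every plateau value thus equals $N$, no strict drop is admissible, and $z(\psi(t,\cdot))\equiv N$ on all of $\mathbb{R}$.

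The main obstacle is the use of Corollary \ref{difference-lapnumber}(b), which must supply the implication ``multiple zero of $\psi$ at $t_0$ forces a strict drop of $z(\psi(\cdot))$ at $t_0$,'' so that simple-zero times densely fill each plateau of $z(\psi(\cdot))$ and Lemma \ref{zero-cons-local} becomes pointwise applicable there. In the backward case $s_n\to-\infty$, the large-$n$ value of $z(\tilde\psi_n(t,\cdot)):=z(\varphi(s_n+t,\cdot;u^1,g)-\varphi(s_n+t,\cdot;u^2,g))$ is not pinned down a priori by Corollary \ref{difference-lapnumber}(c); instead, Lemma \ref{zero-cons-local} applied at some $t\ge T_2$ where $\psi$ has only simple zeros identifies this value as $M$, after which the remainder of the argument proceeds verbatim.
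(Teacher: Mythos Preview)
The paper does not supply its own proof of this lemma; it simply cites \cite[Lemma~2.4]{SWZ}. Your argument is the standard one and is correct for the forward case $t_n\to+\infty$: once Corollary~\ref{difference-lapnumber}(c) pins down the stabilized value $N$ of $z(w(\cdot))$ along the approximating orbit, Lemma~\ref{zero-cons-local} transfers this value to every simple-zero time of $\psi$, and Corollary~\ref{difference-lapnumber}(b) (multiple zero $\Rightarrow$ strict drop) then forbids any drop.

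The backward sketch, however, is slightly too quick. You write that Lemma~\ref{zero-cons-local} applied at a single $t\ge T_2$ ``identifies this value as $M$, after which the remainder of the argument proceeds verbatim.'' But in the forward case the crucial input was that $z(\psi_n(t,\cdot))=N$ for \emph{every} fixed $t$ and all large $n$; having $z(\tilde\psi_n(t,\cdot))=M$ only at one particular $t$ does not yet give this. The missing one-line observation is that $z(w(\tau,\cdot))$ is non-increasing in $\tau$ and equals $M$ along the sequence $\tau=s_n+T_2\to-\infty$; monotonicity then forces $z(w(\tau,\cdot))\equiv M$ for all sufficiently negative $\tau$, so that $z(\tilde\psi_n(t,\cdot))=M$ for every fixed $t$ once $n$ is large. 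After inserting this sentence, the plateau argument really does go through verbatim with $M$ in place of $N$.
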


\subsection{Invariant subspaces and invariant manifolds of parabolic equations on the circle}
%The definitions and notations in this section are just following from \cite{SWZ,SWZ2}.

Let $E\subset X\times H(f)$ be a connected and compact invariant set of \eqref{equation-lim2} which admits a compact flow extension. Denote by $\sigma(E)$ the Sacker-Sell spectrum associated with $E$. Then $\sigma(E)=\cup_{k=0}^\infty I_k$, where $I_k=[a_k,b_k]$ and $\{I_k\}$ is ordered from right to left, that is, $\cdots<a_k\leq b_k<a_{k-1}\leq b_{k-1}<\cdots<a_0\leq b_0$ (cf. \cite{Chow1994,Sacker1978,Sacker1991}).

 Consider the linearly variational equation of \eqref{equation-lim1}:
\begin{equation}\label{linear-equation2}
\psi_t=\psi_{xx}+a(x,\omega\cdot t)\psi_x+b(x,\omega\cdot t)\psi,\,\,t>0,\,x\in S^{1}=\mathbb{R}/2\pi \mathbb{Z},
\end{equation}
where  $\omega=(u_0,g)\in E$, $a(x,\omega)=g_p(0,u_0,(u_0)_x)$ (here $g_p(\cdot,\cdot,p)$ is the derivative of $g$ with respect to $p$), $b(x,\omega)=g_u(0,u_0,(u_0)_x)$.

Let $\Psi(t,\omega):X\rightarrow X$ be the evolution operator generated by \eqref{linear-equation2}, that is, the evolution operator of the following equation:
\begin{equation}\label{linear-opera}
  v'=A(\omega\cdot t)v,\quad t>0,\,\omega\in E,\, v\in X,
\end{equation}
where $A(\omega)v=v_{xx}+a(x,\omega)v_x+b(x,\omega)v$, and $\omega\cdot t$ is as in \eqref{linear-equation2}.

For any given $0\leq n_1\leq n_2\leq\infty$. When $n_2\neq \infty$, let
\begin{equation*}\label{twoside-estimate}
\begin{split}
V^{n_1,n_2}(\omega)=\{v\in X:&\|\Psi(t,\omega)v\|=o(e^{a^-t})\ \text{as}\ t\rightarrow -\infty\\ & \|\Psi(t,\omega)v\|=o(e^{b^+t})\ \text{as}\ t\rightarrow \infty\}
\end{split}
\end{equation*}
where $a^-$, $b^+$ are such that $b_{n_2+1}<a^-<a_{n_2}\leq b_{n_1}<b^+<a_{n_1-1}$. Here $a_{n_1-1}=\infty$ if $n_1=0$.  When $n_1<n_2=\infty$, let
\[
V^{n_1,\infty}(\omega)=\{v\in X :\|\Psi(t,\omega)v\|=o(e^{b^+t})\text{as }t\to\infty\}
\]
where $b^+$ is such that $b_{n_1}<b^+<\lambda$ for any $\lambda\in \cup_{k=0}^{n_1-1}I_k$.

The following lemma is adopted from \cite[Lemma 2.6]{SWZ}, which directly follows from the Floquet theory established by Chow, Lu and Mallet-Paret in \cite[Sections 4 and 9]{Chow1995} (see also in \cite{Te} or \cite[Theorem 4.5]{Pola}).
\begin{lemma}\label{L:zero-inva}
For given $0\leq n_1\leq n_2\leq\infty$($n_1\not = n_2$ when $n_2=\infty$), we have $N_1\leq z(v(\cdot))\leq N_2$ for any $v\in V^{n_1,n_2}(\omega)$, where
\begin{equation*}
N_1=\left\{
\begin{split}
 &{\rm dim}V^{0,n_1-1},\,\quad\,\,\,\text{ if }{\rm dim}V^{0,n_1-1}\text{ is even;}\\
 &{\rm dim}V^{0,n_1-1}+1,\,\text{ if }{\rm dim}V^{0,n_1-1}\text{ is odd,}
\end{split}\right.
\end{equation*} and
\begin{equation*}
N_2=\left\{
\begin{split}
 &{\rm dim}V^{0,n_2},\,\quad\,\,\,\text{ if }{\rm dim}V^{0,n_2}\text{ is even;}\\
 &{\rm dim}V^{0,n_2}-1,\,\text{ if }{\rm dim}V^{0,n_2}\text{ is odd.}
\end{split}\right.
\end{equation*}
\end{lemma}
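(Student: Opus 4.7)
The plan is to deduce the bounds from the Chow--Lu--Mallet-Paret Floquet theory \cite{Chow1995} (see also \cite{Te,Pola}) combined with the zero-dropping property of Lemma \ref{zero-number}. Along the compact invariant set $E$, that theory produces a continuous invariant splitting $X=V^{0,k}(\omega)\oplus V^{k+1,\infty}(\omega)$ across every spectral gap, and the individual spectral subbundles $V^{k,k}(\omega)$ each have dimension one or two. I will rely on two structural consequences: (i) the zero number is constant, say equal to $z_k$, on the nontrivial fibers of $V^{k,k}(\omega)$; and (ii) the sequence $\{z_k\}$ obeys the identity $z_k=\dim V^{0,k-1}(\omega)$ when this is even and $z_k=\dim V^{0,k-1}(\omega)+1$ when it is odd, reflecting that a two-dimensional subbundle contributes a jump of $2$ to the zero number while a one-dimensional one contributes a parity flip. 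In particular $z_{n_1}=N_1$, and $z_{n_2}=N_2$ when $n_2<\infty$.

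For the upper bound $z(v)\le N_2$, fix a nonzero $v\in V^{n_1,n_2}(\omega)$ with $n_2<\infty$ and decompose $v=\sum_{k=n_1}^{n_2}v_k$ with $v_k\in V^{k,k}(\omega)$; let $k^+$ be the largest index with $v_{k^+}\neq 0$. As $t\to-\infty$ the $V^{k^+,k^+}$-component dominates the backward orbit, since the exponential rates associated with the other $I_k$'s ($k<k^+$) are strictly larger there. Normalizing and extracting a subsequential limit, I obtain a sequence $t_n\to-\infty$ and a nontrivial limit $w^\ast$ that is a section of $V^{k^+,k^+}(\omega\cdot t_n)$ via the uniform dichotomies in the Floquet normal form. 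Combining Lemma \ref{zero-number}(c) with Lemma \ref{zero-cons-local} lets me arrange that $w^\ast$ has only simple zeros, so $z(\Psi(t_n,\omega)v)=z(w^\ast)=z_{k^+}$ eventually. Monotonicity of $t\mapsto z(\Psi(t,\omega)v)$ then yields $z(v)\le z_{k^+}\le z_{n_2}=N_2$. A symmetric argument at $t\to+\infty$ using the smallest active index $k^-$ delivers $z(v)\ge z_{k^-}\ge z_{n_1}=N_1$. The case $n_2=\infty$ admits only the forward argument, producing the lower bound $N_1\le z(v)$; the stated upper bound becomes vacuous once $\dim V^{0,\infty}$ is interpreted as infinite.

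The principal obstacle is justifying the asymptotic selection of the dominant spectral subbundle, i.e., the claim that the normalized backward or forward orbit converges along sequences to a section of $V^{k^\pm,k^\pm}$ with only simple zeros. This is where the Chow--Lu--Mallet-Paret normal form is essential: it supplies the uniform exponential dichotomies between distinct spectral intervals that single out the dominant component, and it ensures that a residual set of fibers of each spectral subbundle consists of functions with only simple zeros, so that Lemma \ref{zero-cons-local} can be invoked to pin down the zero number. Once this selection is achieved, the structural formula for $z_k$ in (ii) converts the bounds $z_{k^-}\le z(v)\le z_{k^+}$ into $N_1\le z(v)\le N_2$.
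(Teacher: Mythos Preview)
Your proposal is correct and follows the same route the paper indicates: the paper does not give an independent argument but simply records that the lemma is adopted from \cite[Lemma~2.6]{SWZ} and ``directly follows from the Floquet theory established by Chow, Lu and Mallet-Paret in \cite[Sections~4 and~9]{Chow1995} (see also \cite{Te} or \cite[Theorem~4.5]{Pola}).'' Your sketch---decompose along the spectral subbundles $V^{k,k}(\omega)$, use the exponential separation to select the dominant component as $t\to\pm\infty$, identify the constant zero number $z_k$ on each subbundle via the structural formula, and conclude by the monotonicity of $z(\Psi(t,\omega)v)$---is precisely the mechanism behind that Floquet theory. One small sharpening: you need not restrict to a ``residual set of fibers'' with simple zeros; since $z$ is constant along the entire orbit of any nonzero $w\in V^{k,k}(\omega)$, Lemma~\ref{zero-number}(b) forces every nonzero element of each $V^{k,k}(\omega)$ to have only simple zeros, which is what makes the limiting step with Lemma~\ref{zero-cons-local} go through cleanly.
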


\vskip 2mm
By using arguments as in \cite{P.Bates,Chow1991, Chow1994-2, Hen, SWZ,SWZ2}, we have the following lemma concerning with nonlinear invariant manifolds.
\begin{lemma}\label{invari-mani}
 There is a $\delta_0>0$ such that for any $0<\delta^*<\delta_0$ and $0\leq n_1\leq n_2\leq\infty$ ($n_1\not = n_2$ when $n_2=\infty$), \eqref{equation-lim1} admits for each $\omega=(u_0,g)\in E$ a local invariant manifold  $M^{n_1,n_2}(\omega,\delta^*)$ with the following properties:
 \begin{itemize}
 \item [\rm{(i)}] There are $K_0>0$, and a bounded continuous function $h^{n_1,n_2}(\omega): V^{n_1,n_2}(\omega)$ $\rightarrow V^{n_2+1,\infty}(\omega)\oplus V^{0,n_1-1}(\omega)) $ being $C^1$ for each fixed $\omega\in E$, and $h^{n_1,n_2}(v,\omega)$ $=o(\|v\|)$, $\|(\partial h^{n_1,n_2}/\partial v)(v,\omega)\|\leq K_0$ for all $\omega\in E$, $v\in V^{n_1,n_2}(\omega)$ such that
 \begin{eqnarray*}
 M^{n_1,n_2}(\omega,\delta^*)=\left\{ u_0+v_0^{n_1,n_2}+h^{n_1,n_2}(v_0^{n_1,n_2},\omega):v_0^{n_1,n_2}\in V^{n_1,n_2}(\omega) \cap \{v\in X: \|v\|<\delta^*\}\right\}.
 \end{eqnarray*}
 Moreover, $M^{n_1,n_2}(\omega,\delta^*)-u_0$ are diffeomorphic to $V^{n_1,n_2}(\omega)\cap\{v\in X| \|v\|<\delta^*\}$, and tangent to $V^{n_1,n_2}(\omega)$ at $0\in X$ for each $\omega\in E$.
 \item [\rm{(ii)}] $M^{n_1,n_2}(\omega,\delta^*)$ is locally invariant in the sense that if $v\in M^{n_1,n_2}(\omega,\delta^*)$ and $\norm{\varphi(t,\cdot;v,g)-\varphi(t,\cdot;u_0,g)}<\delta^*$ for all $t\in [0,T]$, then $\varphi(t,\cdot;v,g)\in M^{n_1,n_2}(\omega\cdot t,\delta^*)$ for all $t\in [0,T]$. Therefore,
 for any $v\in M^{n_1,n_2}(\omega,\delta^*)$, there is a $\tau>0$ such that $\varphi(t,\cdot;v,g)\in M^{n_1,n_2}(\omega\cdot t,\delta^*)$ for any $t\in \mathbb{R}$ with $0<t<\tau$.
 \end{itemize}
\end{lemma}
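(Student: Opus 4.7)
The plan is to construct the manifold $M^{n_1,n_2}(\omega,\delta^*)$ as a graph of a $C^1$ function $h^{n_1,n_2}$ over the ``middle'' invariant subbundle $V^{n_1,n_2}(\omega)$, following the standard Lyapunov--Perron technique for skew-product semiflows adapted to parabolic equations on $S^1$ (as in \cite{Chow1991,Chow1994-2,Hen,SWZ,SWZ2,P.Bates}). First, I would fix $\omega=(u_0,g)\in E$ and rewrite \eqref{equation-lim1} in the variable $v=u-\varphi(\cdot,\cdot;u_0,g)$, so that it becomes
\begin{equation*}
v_t=A(\omega\cdot t)v+F(v,\omega\cdot t),
\end{equation*}
where $A(\omega\cdot t)$ is the linear operator from \eqref{linear-opera} and $F(v,\omega\cdot t)$ collects the nonlinear remainder, satisfying $F(0,\cdot)=0$ and $D_vF(0,\cdot)=0$.

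Next, I would use the Sacker--Sell spectral decomposition to split $X=V^{0,n_1-1}(\omega)\oplus V^{n_1,n_2}(\omega)\oplus V^{n_2+1,\infty}(\omega)$ (with the obvious modification when $n_1=0$ or $n_2=\infty$), with uniform exponential dichotomy estimates governed by gap rates $a^-<b^+$ chosen strictly inside the spectral gaps bounding $I_{n_1-1},I_{n_1},\ldots,I_{n_2},I_{n_2+1}$. The evolution operator $\Psi(t,\omega)$ then satisfies the expected exponential bounds on each subbundle. To localize, I would introduce a smooth cut-off $\chi_{\delta^*}$ of the norm on $X$ and replace $F$ by $F_{\delta^*}(v,\omega)=\chi_{\delta^*}(\|v\|)F(v,\omega)$; by taking $\delta^*<\delta_0$ sufficiently small, $F_{\delta^*}$ becomes globally Lipschitz in $v$ with Lipschitz constant as small as desired, uniformly in $\omega\in E$.

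The manifold is then produced as the set of initial conditions $v_0\in X$ whose corresponding mild solution of the modified equation lies in a weighted space $\mathcal{C}_\eta=\{v:\RR\to X\mid \sup_t e^{-\eta t}\|v(t)\|<\infty\}$ with $a^-<\eta<b^+$ (when $n_2<\infty$; with only a forward weight when $n_2=\infty$). Projecting the variation-of-constants formula onto the three subbundles and integrating the stable/unstable parts from $\pm\infty$ in the usual Lyapunov--Perron fashion yields a fixed-point equation
\begin{equation*}
v(t)=\Psi(t,\omega)P^{n_1,n_2}(\omega)v_0^{n_1,n_2}+\int_0^t\cdots+\int_{-\infty}^t\cdots+\int_{\infty}^t\cdots
\end{equation*}
on $\mathcal{C}_\eta$, where each integral uses the appropriate spectral projector. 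The Banach fixed-point theorem, applied with the smallness of the Lipschitz constant of $F_{\delta^*}$, gives a unique solution $v(\cdot;v_0^{n_1,n_2},\omega)$ depending Lipschitz-continuously on $v_0^{n_1,n_2}\in V^{n_1,n_2}(\omega)$; setting $h^{n_1,n_2}(v_0^{n_1,n_2},\omega):=v(0;v_0^{n_1,n_2},\omega)-v_0^{n_1,n_2}$ produces the desired graph. The relation $h^{n_1,n_2}(v,\omega)=o(\|v\|)$ and tangency at $0$ follow from $D_vF(0,\cdot)=0$, while the uniform Lipschitz bound $K_0$ on $\partial h^{n_1,n_2}/\partial v$ comes from the contraction estimate. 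The $C^1$ smoothness in $v$ and continuity in $\omega$ are obtained by the standard fiber contraction principle applied to the formally differentiated fixed-point equation, exactly as in \cite{Chow1991,Chow1994-2,P.Bates}. Local invariance (ii) follows because, as long as the true trajectory stays within the $\delta^*$-ball where the cut-off is inactive, it coincides with a trajectory of the modified equation and thus lies in the graph.

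The main obstacle is the low regularity of the nonlinearity $F$ in $t$ (merely Lipschitz from the $C^2$-admissibility of $f$) together with the fact that $A(\omega\cdot t)$ is an unbounded sectorial operator; this forces the use of mild-solution formulations in the fractional-power space $X$, careful choice of the weight $\eta$ inside the correct spectral gap, and verification that the variation-of-constants integrals are well-defined and smoothing in the same way as in Henry's framework \cite{Hen}. Once the dichotomy estimates of Chow--Lu--Mallet-Paret \cite{Chow1995} and the Sacker--Sell trichotomy are invoked, the rest of the argument is parallel to the finite-dimensional Lyapunov--Perron construction and requires no new ideas beyond those already used in \cite{SWZ,SWZ2}.
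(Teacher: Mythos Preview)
Your proposal is correct and follows the standard Lyapunov--Perron construction in the cited references \cite{P.Bates,Chow1991,Chow1994-2,Hen,SWZ,SWZ2}; the paper itself does not give an independent proof but simply invokes these same sources, so your sketch is in fact more detailed than what appears there. No discrepancy to report.
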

 \par

Suppose that $0\in \sigma(E)$ and $n_0$ is such that $0\in I_{n_0}\subset\sigma(E)$. Then $V^s(\omega)=V^{n_0+1,\infty}(\omega)$, $V^{cs}(\omega)=V^{n_0,\infty}(\omega)$, $V^{c}(\omega)=V^{n_0,n_0}(\omega)$, $V^{cu}(\omega)=V^{0,n_0}(\omega)$, and $V^u(\omega)=V^{0,n_0-1}(\omega)$ are referred to as {\it stable, center stable, center, center unstable}, and {\it unstable subspaces} of \eqref{linear-equation2} at $\omega\in E$, respectively. And $M^{cs}(\omega,\delta^*)=M^{n_0,\infty}(\omega,\delta^*)$, $M^c(\omega,\delta^*)=M^{n_0,n_0}(\omega,\delta^*)$, $M^{cu}(\omega,\delta^*)=M^{0,n_0}(\omega,\delta^*)$, and $M^u(\omega,\delta^*)=M^{0,n_0-1}(\omega,\delta^*)$ are referred to as {\it local stable, center stable, center, center unstable, and unstable manifolds} of \eqref{equation-lim1} at $\omega\in E$, respectively.

We now list some useful properties of local invariant manifolds which can be found in \cite{SWZ,SWZ2}.

\begin{remark}\label{stable-leaf}
{\rm
(1) $M^s(\omega,\delta^*)$ and $M^u(\omega,\delta^*)$ are overflowing invariant in the sense that if $\delta^*$ is sufficiently small, then
\[
\varphi(t,\cdot;M^s(\omega,\delta^*),g)\subset M^s(\omega\cdot t,\delta^*),
\]
for $t$ sufficiently positive, and
\[
\varphi(t,\cdot;M^u(\omega,\delta^*),g)\subset M^u(\omega\cdot t,\delta^*),
\]
for $t$ sufficiently negative.  $M^s(\omega,\delta^*)$ and $M^u(\omega,\delta^*)$  are unique and have the following characterizations:
there are $\delta_1^*,\delta_2^*>0$ such that
\begin{align*}
&\{v\in X\, :\, \|\varphi(t,\cdot;v,g)-\varphi(t,\cdot;u,g)\|\le \delta_1^*\,\,{\rm for}\,\, t\ge 0\,\textnormal{ and }\varphi(t,\cdot;v,g)-\varphi(t,\cdot;u,g)\to 0\,\\ &\textnormal{exponentially as}\,\, t\to\infty\}\\
&\subset
M^s(\omega,\delta^*)\subset \{v\in X\, :\, \|v-u\|\le \delta_2^*,\,\, \|\varphi(t,\cdot;v,g)-\varphi(t,\cdot;u,g)\|\to 0\,\, {\rm as}\,\, t\to\infty\}
\end{align*}
and
\begin{align*}
&\{v\in X\, :\,\textnormal{the backward orbit } \varphi(t,\cdot;v,g) \textnormal{ exists and } \|\varphi(t,\cdot;v,g)-\varphi(t,\cdot;u,g)\|\le \delta_1^*\,\,{\rm for}\,\, t\le 0,\\
&\textnormal{ further, }\varphi(t,\cdot;v,g)-\varphi(t,\cdot;u,g)\to 0\,\, \textnormal{exponentially as}\,\, t\to -\infty\}\\
&\subset
M^u(\omega,\delta^*)\subset \{v\in X\, :\, \|v-u\|\le \delta_2^*,\, \|\varphi(t,\cdot;v,g)-\varphi(t,\cdot;u,g)\|\to 0\,\, {\rm as}\,\, t\to -\infty\}.
\end{align*}

Moreover, one can find constants $\alpha$, $C>0$, such that for any $\omega\in E$, $v^s\in M^s(\omega,\delta^*)$, $v^u\in M^u(\omega,\delta^*)$,
\begin{equation}\label{exponen-decrea}
\begin{split}
  \|\varphi(t,\cdot;v^s,g)-\varphi(t,\cdot;u,g)\|&\leq Ce^{-\frac{\alpha}{2}t}\|v^s-u\|\quad \text{for}\ t\geq 0,\\
  \|\varphi(t,\cdot;v^u,g)-\varphi(t,\cdot;u,g)\|&\leq Ce^{\frac{\alpha}{2}t}\|v^u-u\|\quad \text{for}\ t\leq 0.
\end{split}
\end{equation}

\vskip 3mm
(2) $M^{cs}(\omega,\delta^*)$ (choose $\delta^*$ smaller if necessary) has a repulsion property in the sense that if $\norm{v-u}<\delta^*$ but $v\notin M^{cs}(\omega,\delta^*)$, then there is $T>0$ such that $\norm{\varphi(T,\cdot;v,g)-\varphi(T,\cdot;u,g)}\ge \delta^*$. Consequently, if $\norm{\varphi(t,\cdot;v,g)-\varphi(t,\cdot;u,g)}<\delta^*$ for all $t\ge 0$ then one may conclude that $v\in M^{cs}(\omega,\delta^*)$. Note that $M^{cs}(\omega,\delta^*)$ is not unique in general.

\vskip 3mm

(3) $M^{cu}(\omega,\delta^*)$ has an attracting property  in the sense that if $\|\varphi(t,\cdot;v,g)-\varphi(t,\cdot;u,g)\|<\delta^*$ for all $t\ge 0$, then $v^*\in M^{cu}(\omega^*,\delta^*)$ whenever $(\varphi(t_n,\cdot;v,g),\omega\cdot t_n)\to (v^*,\omega^*)$ and with some $t_n\to \infty$. Moreover, one can choose $\delta^*$ smaller such that, if  $\norm{v-u}<\delta^*$ with a unique backward orbit $\varphi(t,\cdot;v,g)(t\le 0)$ but $v\notin M^{cu}(\omega,\delta^*)$, then there is $T<0$ such that $\norm{\varphi(t,\cdot;v,g)-\varphi(t,\cdot;u,g)}\ge \delta^*$. As a consequence, if $v$ has a unique backward orbit $\varphi(t,\cdot;v,g)(t\le 0)$ with $\norm{\varphi(t,\cdot;v,g)-\varphi(t,\cdot;u,g)}<\delta^*$ for all $t\le 0$, then one may conclude that $v\in M^{cu}(\omega,\delta^*)$. Note that $M^{cu}(\omega,\delta^*)$ is not unique in general.

\vskip 3mm
(4) For any $\omega\in E$, we have
\[
 M^{cs}(\omega,\delta^*)={\cup}_{u_c\in M^c(\omega,\delta^*)}\bar{M}_s(u_c,\omega,\delta^*)\ ({\rm resp. }\, \ M^{cu}(\omega,\delta^*)={\cup}_{u_c\in M^c(\omega,\delta^*)}\bar{M}_u(u_c,\omega,\delta^*)),
\]
where $\bar{M}_s(u_c,\omega,\delta^*)$ (resp. $\bar{M}_u(u_c,\omega,\delta^*)$) is the so-called {\it stable leaf} (resp. {\it unstable leaf}) of \eqref{equation-lim1} at $u_c$. It is invariant in the sense that if $\tau>0$ (resp. $\tau<0$) is such that $\varphi(t,\cdot;u_c,g)\in M^{c}(\omega\cdot t,\delta^*)$ and $\varphi(t,\cdot;v,g)\in M^{cs}(\omega,\delta^*)$ (resp. $\varphi(t,\cdot;v,g)\in M^{cu}(\omega,\delta^*)$) for all $0\leq t<\tau$ (resp. $\tau<t\leq 0$), where $v\in \bar{M}_s(u_c,\omega,\delta^*)$ (resp. $v\in \bar{M}_u(u_c,\omega,\delta^*)$), then $\varphi(t,\cdot;v,g)\in\bar{M}_s(\varphi(t,\cdot;u_c,g),\omega\cdot t,\delta^*)$ (resp. $\varphi(t,\cdot;v,g)\in\bar{M}_u(\varphi(t,\cdot;u_c,g),\omega\cdot t,\delta^*)$) for $0\leq t<\tau$ (resp. $\tau<t\leq 0$). Moreover, there are $K,\beta >0$ such that for any $u\in\bar{M}_s(u_c,\omega,\delta^*)$ (resp. $u\in\bar{M}_u(u_c,\omega,\delta^*)$) and $\tau>0$ (resp. $\tau<0$) with $\varphi(t,\cdot;v,g)\in M^{cs}(\omega\cdot t,\delta^*)$ (resp. $\varphi(t,\cdot;v,g)\in M^{cu}(\omega\cdot t,\delta^*)$), $\varphi(t,\cdot;u_c,g)\in M^{c}(\omega\cdot t,\delta^*)$ for $0\leq t<\tau$ (resp. $\tau<t\leq 0$), one has that
\begin{equation*}
\begin{split}
\|\varphi(t,\cdot;v,g)-\varphi(t,\cdot;u_c,g)\|&\le  Ke^{-\beta t}\|v-u_c\|\\
(\mathrm{resp}. \ \|\varphi(t,\cdot;v,g)-\varphi(t,\cdot;u_c,g)\|&\le  Ke^{\beta t}\|v-u_c\|)
\end{split}
\end{equation*}
for $0\leq t<\tau$ (resp. $\tau< t\leq 0$).

}
\end{remark}

\begin{lemma}\label{zerocenter}
  Let $\omega=(u_0,g)\in E$ and \begin{equation*}
N_u=\left\{
\begin{split}
 &{\dim} V^u(E),\,\quad\,\,\,\text{ if }{\rm dim}V^u(E)\text{ is even,}\\
 &{\rm dim}V^u(E)+1,\,\text{ if }{\rm dim}V^u(E)\text{ is odd.}
\end{split}\right.
\end{equation*}
{ Suppose that  ${\rm dim}V^u(E)\ge 1$,
then} for $\delta^*>0$ small enough, one has
\begin{itemize}

\item[{\rm (1)}] If ${\rm dim}V^c(E)=0$ and ${\rm dim}V^u(E)$ is odd, then
\begin{eqnarray*}
\begin{split}
 & z(u(\cdot)-u_0(\cdot))\geq N_u\quad \text{for }u\in M^{s}(\omega,\delta^*)\setminus\{u_0\},\\
 & z(u(\cdot)-u_0(\cdot))\leq N_u-2\quad \text{for }u\in M^{u}(\omega,\delta^*)\setminus\{u_0\}.\\
\end{split}
\end{eqnarray*}

\item[{\rm (2)}] If ${\rm dim}V^c(E)=1$ and $\dim V^u(E)$ is odd, then
\begin{eqnarray*}
\begin{split}
 %& z(u(\cdot)-u_0(\cdot))\geq N_u+3\quad \text{for }u\in M^s(\omega,\delta^*)\setminus\{u_0\}\\
 & z(u(\cdot)-u_0(\cdot))\geq N_u\quad \text{for }u\in M^{cs}(\omega,\delta^*)\setminus\{u_0\},\\
 & z(u(\cdot)-u_0(\cdot))=N_u\quad \text{for }u\in M^{c}(\omega,\delta^*)\setminus\{u_0\},\\
 & z(u(\cdot)-u_0(\cdot))\leq N_u-2\quad \text{for }u\in M^{u}(\omega,\delta^*)\setminus\{u_0\}.\\
 %& z(u(\cdot)-u_0(\cdot))\geq N_u-1\quad \text{for }u\in M^{u}(\omega,\delta^*)\setminus\{u_0\}
\end{split}
\end{eqnarray*}

\item[{\rm (3)}] If ${\rm dim}V^c(E)=1$ and $\dim V^u(E)$ is even, then
\begin{eqnarray*}
\begin{split}
 %& z(u(\cdot)-u_0(\cdot))\geq N_u+3\quad \text{for }u\in M^s(\omega,\delta^*)\setminus\{u_0\}\\
 & z(u(\cdot)-u_0(\cdot))\geq N_u+2\quad \text{for }u\in M^{s}(\omega,\delta^*)\setminus\{u_0\},\\
 & z(u(\cdot)-u_0(\cdot))=N_u\quad \text{for }u\in M^{c}(\omega,\delta^*)\setminus\{u_0\},\\
 & z(u(\cdot)-u_0(\cdot))\leq N_u\quad \text{for }u\in M^{cu}(\omega,\delta^*)\setminus\{u_0\}.\\
 %& z(u(\cdot)-u_0(\cdot))\geq N_u-1\quad \text{for }u\in M^{u}(\omega,\delta^*)\setminus\{u_0\}
\end{split}
\end{eqnarray*}
\end{itemize}
\end{lemma}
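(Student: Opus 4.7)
The plan is to deduce each bound from the zero-number constraints on the linear invariant subspaces given by Lemma \ref{L:zero-inva}, transported back to $t=0$ using the graph representation of the invariant manifolds in Lemma \ref{invari-mani}, the monotonicity of the zero-number function along differences of solutions (Corollary \ref{difference-lapnumber}), and its local constancy on functions with simple zeros (Lemma \ref{zero-cons-local}). In each case the ``unstable-type'' upper bounds (on $M^u$ in (1)--(2), and on $M^{cu}$ in (3)) are obtained by running the orbit backward in time, while the ``stable-type'' lower bounds (on $M^s$ in (1) and (3), and on $M^{cs}$ in (2)) are obtained by running it forward. The equalities on $M^c$ in (2)--(3) then follow for free from $M^c\subset M^{cs}\cap M^{cu}$.

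I would carry out the upper bound on $M^u(\omega,\delta^*)$ first, as a template. Fix $u\neq u_0$ and set $w(t):=\varphi(t,\cdot;u,g)-\varphi(t,\cdot;u_0,g)$. By the overflowing invariance of $M^u$ (Remark \ref{stable-leaf}(1)), $w(t)$ is defined for all $t\le 0$, decays exponentially to $0$ as $t\to -\infty$, and admits the representation $w(t)=v^u(t)+h^u(v^u(t),\omega\cdot t)$ with $v^u(t)\in V^u(\omega\cdot t)$ and $\|h^u\|=o(\|v^u(t)\|)$. Pick $t_n\to -\infty$ with $\omega\cdot t_n\to \omega^*\in E$ and, using finite-dimensionality and continuity of the bundle $V^u$, $v^u(t_n)/\|v^u(t_n)\|\to v^*\in V^u(\omega^*)$ with $\|v^*\|=1$. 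I then evolve $v^*$ forward under the linearized flow: by Lemma \ref{zero-number}(c) some $T>0$ makes $\Psi(T,\omega^*)v^*$ have only simple zeros, and Lemma \ref{L:zero-inva} applied with $n_2=n_0-1$ (so $V^{0,n_0-1}=V^u$; the odd parity of $\dim V^u$ giving $N_2=\dim V^u-1=N_u-2$) forces $z(\Psi(T,\omega^*)v^*)\le N_u-2$. Continuous dependence of the nonlinear flow on initial data, together with $\|h^u\|=o(\|v^u\|)$, yields $w(t_n+T)/\|v^u(t_n)\|\to \Psi(T,\omega^*)v^*$ in $X$. Lemma \ref{zero-cons-local} then gives $z(w(t_n+T))\le N_u-2$ for $n$ large, and since $t_n+T<0$ for large $n$, Corollary \ref{difference-lapnumber} yields $z(u-u_0)=z(w(0))\le z(w(t_n+T))\le N_u-2$, as required.

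For the lower bound on $M^s(\omega,\delta^*)$ (in (1), (3)) or $M^{cs}(\omega,\delta^*)$ (in (2)) the argument is the time-reversal of the above: take $t_n\to +\infty$ and use that $\|w(t)\|\to 0$ exponentially ($M^s$ case) or $\|w(t)\|<\delta^*$ for all $t\ge 0$ ($M^{cs}$ case, Remark \ref{stable-leaf}(2)); write $w$ via $h^s$ or $h^{cs}$ and extract $v^*\in V^s(\omega^*)$ or $V^{cs}(\omega^*)$. Lemma \ref{L:zero-inva} (with $n_1=n_0+1$ for $V^s$, respectively $n_1=n_0$ for $V^{cs}$) gives $z(v^*)\ge N_u+2$ in (3) for $V^s$, and $z(v^*)\ge N_u$ in the remaining subcases; the parity bookkeeping depends only on whether $\dim V^{cu}$ or $\dim V^u$ is odd. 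Once simple zeros are attained by a short forward linearized evolution, Lemma \ref{zero-cons-local} and Corollary \ref{difference-lapnumber} again transfer the bound to $t=0$. The analyses of $M^{cu}(\omega,\delta^*)$ in (3) and of $M^u(\omega,\delta^*)$ in (2) are identical to case (1)'s $M^u$ up to the parity numerology. Finally, both equalities for $u\in M^c(\omega,\delta^*)\setminus\{u_0\}$ in cases (2) and (3) follow at once from $M^c\subset M^{cs}\cap M^{cu}$ combined with the bounds just derived.

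The principal technical hurdle is to pass from the normalized limit direction $v^*\in V^u(\omega^*)$ (or its stable counterpart) back to a zero-number statement about the \emph{nonlinear} difference $w(t_n+T)$. Two ingredients must cooperate: the uniform graph estimate $\|h^{n_1,n_2}(v,\omega)\|=o(\|v\|)$ from Lemma \ref{invari-mani}(i), which shows the nonlinear deviation is absorbed by the normalization; and the continuous (indeed Whitney-smooth, by Sacker--Sell) dependence of the subspaces $V^{n_1,n_2}(\omega)$ on $\omega\in E$, which is what allows the limit direction to lie in the correct fiber. With these in hand, the desired zero-number inequalities reduce, case by case, to the elementary parity counts for $\dim V^u(E)$ and $\dim V^c(E)$ via Lemma \ref{L:zero-inva}.
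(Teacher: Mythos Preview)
The paper does not give its own proof but defers to \cite[Corollary 3.5]{SWZ}. Your outline follows the standard strategy that one finds in that literature: transfer the linear zero-number constraints of Lemma~\ref{L:zero-inva} to the nonlinear invariant manifolds via the graph representation of Lemma~\ref{invari-mani}, using Corollary~\ref{difference-lapnumber} and Lemma~\ref{zero-cons-local} as the bridge. For $M^u$ and $M^s$ your template argument is correct as written, and the parity bookkeeping you record is accurate in all three cases.

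There is, however, a genuine gap in your treatment of $M^{cs}$ in (2) and $M^{cu}$ in (3). Your scheme ``take $t_n\to\pm\infty$, normalize, and extract $v^*$ in the relevant fiber'' relies on $\|w(t_n)\|\to 0$ so that the $o(\|v\|)$ estimate for $h^{n_1,n_2}$ kills the nonlinear remainder after normalization. That decay is guaranteed on $M^s$ and $M^u$ (Remark~\ref{stable-leaf}(1)), but on $M^{cs}$ and $M^{cu}$ you only have the bound $\|w(t)\|<\delta^*$; the center component need not shrink, so $w(t_n)/\|v^{cs}(t_n)\|$ need not converge into $V^{cs}(\omega^*)$, and you cannot transfer the simple-zero information back. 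The clean repair is to argue by contradiction on $\delta^*$: if the conclusion failed for every small $\delta^*$, take $\delta^*_n\to 0$, $\omega_n\in E$, and $u_n\in M^{cs}(\omega_n,\delta^*_n)$ violating the bound; then $\|v_n\|\to 0$ is forced, the normalization $(u_n-(u_0)_n)/\|v_n\|\to \hat v^*\in V^{cs}(\omega^*)$ is legitimate, and the rest of your argument (evolve a fixed time $T$, apply Lemma~\ref{zero-cons-local}, then Corollary~\ref{difference-lapnumber}) goes through. The same device handles $M^{cu}$; note that the backward orbit needed there exists on the ambient finite-dimensional center-unstable manifold of some fixed size $\delta^{**}>\delta^*_n$, so for $\|v_n\|$ small the backward orbit on $[-T,0]$ stays inside. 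A second minor point: to deduce $z=N_u$ on $M^c$ from $M^c\subset M^{cs}\cap M^{cu}$ you need both an upper bound on $M^{cu}$ (case (2)) and a lower bound on $M^{cs}$ (case (3)); these are not in the lemma's statement but follow by the identical argument with the appropriate parity count, and you should say so.
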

\begin{proof}
  See \cite[Corollary 3.5]{SWZ}.
\end{proof}

\begin{lemma}\label{hyperbolic1}
\begin{description}
\item[{\rm (i)}] Suppose that $0\notin \sigma (E)$. Then, for $(u_1,g)$, $(u_2,g)\in E$ with $\|u_1-u_2\|\ll1$, one has $M^s(u_1,g,\delta^*)\cap M^u(u_2,g,\delta^*)\neq \emptyset$ and $M^u(u_1,g,\delta^*)\cap M^s(u_2,g,\delta^*)\neq \emptyset$.

\item[{\rm (ii)}] Suppose that $0\in \sigma (E)$. Then, for $(u_1,g)$, $(u_2,g)\in E$ with $\|u_1-u_2\|\ll1$, one has $M^{cs}(u_1,g,\delta^*)\cap M^u(u_2,g,\delta^*)\neq \emptyset$ and $M^{s}(u_1,g,\delta^*)\cap M^{cu}(u_2,g,\delta^*)\neq \emptyset$.
\end{description}
\end{lemma}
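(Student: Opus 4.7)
The plan is to realize the desired intersection as the solution of a fixed-point equation and then invoke the implicit function theorem, exploiting transversality of the natural splittings of $X$ coming from Sacker--Sell theory.

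First I would fix $(u_0,g)\in E$ and the exponential dichotomy decomposition $X=V^s(u_0,g)\oplus V^u(u_0,g)$ in case (i), or $X=V^{cs}(u_0,g)\oplus V^u(u_0,g)=V^s(u_0,g)\oplus V^{cu}(u_0,g)$ in case (ii). By the Sacker--Sell theory used implicitly in Lemma~\ref{invari-mani}, these splittings and the associated projections depend continuously on the base point $\omega\in E$. Using Lemma~\ref{invari-mani}, write $M^s(u_1,g,\delta^*)$ as the graph $\{u_1+v+h^s(v;u_1,g):v\in V^s(u_1,g),\ \|v\|<\delta^*\}$ with $h^s$ valued in $V^u(u_1,g)\oplus V^c(u_1,g)$ (the center part is absent in case (i)), and likewise $M^u(u_2,g,\delta^*)=\{u_2+w+h^u(w;u_2,g):w\in V^u(u_2,g),\ \|w\|<\delta^*\}$ with $h^u$ valued in $V^s(u_2,g)$ (case (i)) or $V^s(u_2,g)\oplus V^c(u_2,g)$ (the asymmetric case in (ii)). For case (ii) I would use, symmetrically, the graph of $M^{cs}(u_1,g,\delta^*)$ over $V^{cs}(u_1,g)$ against the graph of $M^u(u_2,g,\delta^*)$ over $V^u(u_2,g)$, and analogously $M^s$ versus $M^{cu}$.

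Next I would look for a common point by equating the two graphs. Let $\pi^\alpha_1$ denote the projection onto $V^\alpha(u_1,g)$ along its complementary subspace ($\alpha\in\{s,u\}$ in case (i); $\alpha\in\{cs,u\}$ or $\{s,cu\}$ in case (ii)). Setting $\Delta=u_2-u_1$ and projecting the equation
\[
u_1+v+h^s(v;u_1,g)\;=\;u_2+w+h^u(w;u_2,g)
\]
onto the two factors of the splitting yields $v=\pi^{s}_1[\Delta+w+h^u(w;u_2,g)]$ from one component, and substituting it into the other gives a scalar equation of the form
\[
G(w,\Delta)\;:=\;h^s\bigl(\pi^s_1[\Delta+w+h^u(w;u_2,g)];u_1,g\bigr)-\pi^u_1\bigl[\Delta+w+h^u(w;u_2,g)\bigr]\;=\;0.
\]
Because $M^s$ is tangent to $V^s(u_1,g)$ and $M^u$ to $V^u(u_1,g)$ at the base point, one has $h^s(0;u_1,g)=0$, $\partial_v h^s(0;u_1,g)=0$, and $h^u(0;u_1,g)=0$, $\partial_w h^u(0;u_1,g)=0$. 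When $\Delta=0$, $w=0$ solves $G=0$, and the derivative $\partial_w G(0,0)$ reduces to $-\pi^u_1\big|_{V^u(u_1,g)}=-\mathrm{id}_{V^u(u_1,g)}$, which is an isomorphism. The implicit function theorem then produces, for $\|\Delta\|$ sufficiently small, a unique small $w=w(\Delta)$ solving $G=0$, and the common image point lies in $M^s(u_1,g,\delta^*)\cap M^u(u_2,g,\delta^*)$, establishing the first intersection in (i). Swapping the roles of $s$ and $u$ gives the second.

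For case (ii) I would repeat the construction using the decompositions $X=V^{cs}\oplus V^u$ (resp.\ $X=V^s\oplus V^{cu}$): the tangencies $\partial h^{cs}(0;\cdot)=0=\partial h^u(0;\cdot)$ still force $\partial_w G(0,0)$ to be $-\mathrm{id}$ on $V^u$ (resp.\ on $V^{cu}$), so the argument goes through verbatim. The main technical issue I anticipate is keeping track of the base-point dependence of the invariant subspaces and of the graph functions $h^{\alpha}$: one needs continuous dependence of the projections $\pi^\alpha_{\omega}$ and joint regularity of $h^\alpha(\cdot;\omega)$ (smooth in the first variable, continuous in $\omega$) uniformly on $E$, which is what allows the implicit function theorem to be applied with $\omega=(u_1,g)$ and $\Delta=u_2-u_1$ as parameters. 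This uniformity is built into Lemma~\ref{invari-mani} and Remark~\ref{stable-leaf}, so the argument should close once those are carefully invoked.
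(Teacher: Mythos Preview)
Your argument is the standard graph–transversality proof and is essentially correct: write each local manifold as a graph over its tangent subspace, use that the tangent subspaces are complementary in $X$, and solve the resulting fixed-point equation via the implicit function (or contraction mapping) theorem. One point worth tightening is that $w\in V^u(u_2,g)$ and $h^u(w)\in V^s(u_2,g)$ live in subspaces based at $u_2$, not $u_1$, so the projections $\pi^s_1,\pi^u_1$ do not split them cleanly; the clean way is to first re-graph $M^u(u_2,g,\delta^*)$ over the \emph{fixed} splitting $V^s(u_1,g)\oplus V^u(u_1,g)$ (possible because the invariant subspaces vary continuously on $E$), after which your computation of $\partial_wG(0,0)=-\mathrm{id}$ goes through verbatim. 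The uniform Lipschitz bound $\|\partial h/\partial v\|\le K_0$ and $h(v,\omega)=o(\|v\|)$ from Lemma~\ref{invari-mani}(i) are exactly what make the contraction (or IFT) close for $\|u_1-u_2\|$ small.

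As for comparison: the paper does not give its own proof here but simply refers to \cite[Lemma~3.7]{SWZ}, so there is no in-paper argument to compare against. Your proposal supplies precisely the kind of direct proof one would expect behind that citation.
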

\begin{proof}
  See \cite[Lemma 3.7]{SWZ}.
\end{proof}

\begin{remark}\label{invari-space}
{\rm
For any minimal set $M\subset E$, one has $\sigma(M)\subset \sigma(E)$ and $\dim V^u(M)\geq\dim V^u(E)$, $\dim V^c(M)\leq\dim V^c(E)$ and $\mathrm{codim}V^s(M)\leq \mathrm{codim}V^s(E)$ (here $V^u(M)$, $V^c(M)$ and $V^s(M)$ are {\it stable space, center space and unstable space} of the linearized variational equation of \eqref{equation-lim1} on $M$).}
\end{remark}

\section{Basic structural properties of invariant sets}

In this section, we present some basic properties of invariant sets, in particular, $\omega$-limit sets and minimal sets, of the skew-product semiflow \eqref{equation-lim2}. Throughout this section, $E$ denotes  a connected and compact invariant set of \eqref{equation-lim2}, $M$  is a minimal set of \eqref{equation-lim2},  and $\Omega:=\omega(u,g)$ denotes an $\omega$-limit set of \eqref{equation-lim2}.

Hereafter, we always assume that $X$ is the fractional power space as defined in the introduction.
Given any $u\in X$ and $a\in S^1$, we define the shift $\sigma_a$ on $u$ as $(\sigma_a u)(\cdot)=u(\cdot+a)$.
So, if $\varphi(t,\cdot;u,g)$ is a classical solution of \eqref{equation-lim1}, then it is easy to check that $\sigma_a\varphi(t,\cdot;u,g)$ is a classical solution of \eqref{equation-lim1}. Moreover, the uniqueness of solution ensures the {\it translation invariance}, that is, $\sigma_a\varphi(t,\cdot;u,g)=\varphi(t,\cdot;\sigma_au,g)$.

 Let $u\in A\subset X$, we write
 \begin{equation}\label{E:group-orbit-11}
 \Sigma u=\{\sigma_a u\,|\, a\in S^1\}
 \end{equation} as the {\it $S^1$-group orbit} of $u$, and write $\sigma_a A=\{\sigma_au|u\in A\}$ and $\Sigma A=\cup_{u\in A}\Sigma u$, respectively.

 \vskip 2mm

The following two lemmas are concerning with some useful properties of the invariant set $E$.

\begin{lemma}\label{traslation}
Let $E\subset X\times H(f)$ be a connected and compact invariant set of \eqref{equation-lim2}. Then, for any $a\in S^1$, one has $\dim V^u(\sigma_a E)=\dim V^u(E)$, $\dim V^c(\sigma_a E)=\dim V^c(E)$ and $\mathrm{codim} V^s(\sigma_a E)=\mathrm{codim} V^s(E)$.
\end{lemma}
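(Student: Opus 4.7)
The plan is to exploit the $S^1$-equivariance of the system recorded in the paragraph just before the lemma: the uniqueness of solutions gives $\sigma_a\varphi(t,\cdot;u,g)=\varphi(t,\cdot;\sigma_a u,g)$, so that $\sigma_a$ commutes with the skew-product semiflow (the $H(f)$-component is untouched). In particular, if $E$ is invariant and compact, so is $\sigma_a E$, and the map $\omega=(u_0,g)\mapsto\sigma_a\omega:=(\sigma_a u_0,g)$ is a flow isomorphism from $E$ onto $\sigma_a E$.

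Next I would translate this symmetry to the linearization. The coefficients in \eqref{linear-equation2} at $\sigma_a\omega\cdot t=(\sigma_a\varphi(t,\cdot;u_0,g),g\cdot t)$ are obtained from those at $\omega\cdot t$ by replacing $x$ by $x+a$; a direct differentiation of $\sigma_a\varphi(t,\cdot;u_0,g)=\varphi(t,\cdot;\sigma_a u_0,g)$ with respect to the initial datum shows that if $\Psi(t,\omega)$ denotes the evolution operator of \eqref{linear-opera}, then
\begin{equation*}
\Psi(t,\sigma_a\omega)=\sigma_a\,\Psi(t,\omega)\,\sigma_{-a},\qquad t\in\RR,\ \omega\in E.
\end{equation*}
Since $\sigma_a:X\to X$ is a linear topological isomorphism (in fact an isometry), this conjugacy immediately implies that $E$ and $\sigma_a E$ have identical exponential dichotomy structure: the Sacker-Sell spectra agree, $\sigma(\sigma_a E)=\sigma(E)$, and for each pair $0\le n_1\le n_2\le\infty$,
\begin{equation*}
V^{n_1,n_2}(\sigma_a\omega)=\sigma_a\bigl(V^{n_1,n_2}(\omega)\bigr).
\end{equation*}

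Because $\sigma_a$ is a bijective bounded linear map on $X$, it preserves (co)dimension of closed subspaces. Applying the above identity with $(n_1,n_2)$ chosen to pick out the unstable, center and stable subspaces (using the spectral interval $I_{n_0}$ containing $0$, if any), I conclude
\begin{equation*}
\dim V^u(\sigma_a E)=\dim V^u(E),\quad \dim V^c(\sigma_a E)=\dim V^c(E),\quad \mathrm{codim}\,V^s(\sigma_a E)=\mathrm{codim}\,V^s(E),
\end{equation*}
which is the desired conclusion. No step here poses a real obstacle; the only point needing a little care is the verification of the intertwining relation $\Psi(t,\sigma_a\omega)=\sigma_a\Psi(t,\omega)\sigma_{-a}$, which is a routine consequence of the translation invariance of \eqref{equation-lim1} together with differentiation of the semiflow in its spatial argument.
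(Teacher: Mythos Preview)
Your proposal is correct and follows the same approach as the paper, which simply states that the result follows directly from the translation invariance and the definition of the Sacker-Sell spectrum on $E$. You have merely unpacked this one-line justification by making explicit the conjugacy $\Psi(t,\sigma_a\omega)=\sigma_a\Psi(t,\omega)\sigma_{-a}$ and its consequences for the invariant subspaces.
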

\begin{proof}
  It follows directly from the translation invariance and the definition of Sacker-Sell spectrum on $E$.
\end{proof}

\begin{lemma}\label{hyperbolic2}
Assume that $\dim V^c(E)=0$ and $\dim V^u(E)>0$. Then $\dim V^u(E)$ is odd; and moreover, $E$ does not contain any two sided proximal pair.
\end{lemma}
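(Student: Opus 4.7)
The plan is to exploit the $S^1$-equivariance of \eqref{equation-1} combined with the exponential dichotomy along $E$ granted by $\dim V^c(E)=0$.

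\textbf{Parity of $\dim V^u(E)$.} I would first show that every $(u_0,g)\in E$ has $u_0$ spatially constant. Differentiating the identity $\sigma_a\varphi(t,\cdot;u_0,g) = \varphi(t,\cdot;\sigma_a u_0,g)$ in $a$ at $a=0$ yields $\psi:=\partial_x\varphi(t,\cdot;u_0,g)$ as a solution of the linearized equation \eqref{linear-equation2} along $\omega\cdot t$. Parabolic smoothing together with invariance and compactness of $E$ makes $\psi$ bounded in $X$ on all of $\mathbb{R}$; since $0\notin\sigma(E)$, exponential dichotomy rules out any nontrivial such solution, so $\psi\equiv 0$ and $u_0$ is independent of $x$. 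Restricted to this subset the coefficients of \eqref{linear-equation2} depend only on $t$, and the linearization Fourier-decouples as $\dot c_k=(-k^2+ika(t)+b(t))c_k$ for $k\in\ZZ$: the mode $k=0$ contributes a one-dimensional real subbundle and each mode $k\ge 1$ contributes a two-dimensional one (spanned by $\cos kx,\sin kx$). Because the Sacker-Sell interval for mode $k$ is the one for $k=0$ shifted by $-k^2$, stability of the $k=0$ mode would force stability of every higher mode, giving $\dim V^u(E)=0$ and contradicting the hypothesis. Hence the $k=0$ mode is unstable, contributing exactly $1$, and $\dim V^u(E)=1+2N$ is odd.

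\textbf{Absence of two-sided proximal pairs.} I argue by contradiction, first in the same-fiber case $\omega_i=(u_i,g)$ with $u_1\ne u_2$. Along the proximal subsequence $t_n\to\infty$ pass to a common limit $\omega^+=(u^+,g^+):=\lim\Pi^{t_n}\omega_i\in E$. By the first stage the $u_i(t)$ are spatially constant scalars, and $v(t):=u_1(t)-u_2(t)$ satisfies the scalar linear ODE $v'(t)=h(t)v(t)$ with $h(t)=\int_0^1 g_u(t,u_2(t)+\theta v(t),0)\,d\theta$. As $n\to\infty$ one has $u_i\cdot t_n\to u^+$, $g\cdot t_n\to g^+$, and $v\cdot t_n\to 0$, so $h\cdot t_n\to b^+:=g^+_u(\cdot,u^+(\cdot),0)$ uniformly on compacts, whence $b^+\in H(h)$. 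The Sacker-Sell spectrum of $\dot c=b^+(t)c$ coincides with the $k=0$ component of $\sigma(E)$ at $\omega^+$, which is positive by the first stage; invariance of the Sacker-Sell spectrum on the hull transfers this positivity to $\dot c=h(t)c$, giving a uniform lower bound $\alpha_0>0$ on its spectrum. Hence $|v(t)|\ge Ce^{\alpha_0 t/2}$ for $t$ large, directly contradicting $|v(t_n)|\to 0$.

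\textbf{Main obstacle.} The most delicate point is the different-fiber case $g_1\ne g_2$, which is genuinely possible when $H(f)$ itself admits proximal pairs (as for uniformly almost-automorphic but not almost-periodic $f$). Lemma \ref{hyperbolic1}(i) is fiberwise and $v$ no longer satisfies a scalar linear equation. My backup approach here is to use a limit-fiber reduction to apply Lemma \ref{hyperbolic1}(i) near the common limit $\omega^-=\lim\Pi^{s_n}\omega_i$, producing a global orbit $V_n'$ that is backward asymptotic to $u_1$ and forward asymptotic to $u_2$; then Lemma \ref{zerocenter}(1) with $N_u=\dim V^u(E)+1\ge 2$ (even) together with the monotonicity in Corollary \ref{difference-lapnumber}(a) gives $z(V_n'(t)-u_2(t))\ge N_u$ for all $t$, while Lemma \ref{zero-cons-local} applied as $t\to-\infty$, where $V_n'(t)-u_2(t)$ approaches the spatially constant nonzero function $v(t)$ with $z(v(t))=0$, forces $z(V_n'(t)-u_2(t))=0$, yielding the absurd inequality $0\ge N_u\ge 2$.
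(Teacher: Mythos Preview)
Your parity argument is correct: showing every element of $E$ is spatially homogeneous via the globally bounded linearized solution $\partial_x\varphi$, then Fourier-decoupling to get $\dim V^u(E)=1+2N$, is a clean and valid route.

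The same-fiber argument in Part~2, however, has a real gap at the Sacker--Sell step. From $b^+\in H(h)$ you only get that the spectrum of the scalar cocycle over $H(h)$ \emph{contains} the (positive) spectrum over the orbit closure of $b^+$; it can be strictly larger and in particular may reach into $(-\infty,0]$, so the uniform lower bound $|v(t)|\ge Ce^{\alpha_0 t/2}$ does not follow. A clean fix within your own reduction: since the $k=0$ mode is purely unstable over $E$, the scalar local unstable manifold at each $\omega\in E$ is a full neighborhood; thus for large $n$ one has $u_1(t_n)\in M^u_{\text{scalar}}(\Pi^{t_n}(u_2,g),\delta^*)$, and the backward contraction in Remark~\ref{stable-leaf}(1) gives $|u_1-u_2|\le Ce^{-\alpha t_n/2}|u_1(t_n)-u_2(t_n)|\to 0$, already a contradiction from positive proximality alone.

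Your backup is essentially the paper's route (via \cite[Lemmas~4.5--4.6]{SWZ2}: transversal intersection of invariant manifolds from Lemma~\ref{hyperbolic1}(i), zero-number bounds from Lemma~\ref{zerocenter}(1), and the constancy $z(u_1-u_2)=0$ coming from spatial homogeneity), and it works---but only for same-fiber pairs. The orbit $V_n'$ you build lives over a single base trajectory and cannot be backward asymptotic to an orbit over $g_1$ and forward asymptotic to one over $g_2$ when $g_1\ne g_2$; Lemma~\ref{hyperbolic1}(i) is fiberwise and does not bridge distinct fibers. This is not fatal for the paper's purposes: every invocation of Lemma~\ref{hyperbolic2} here (Lemma~\ref{proximalcons}, proof of Theorem~\ref{hyperbolic0}) concerns same-fiber pairs, and in the almost-periodic framework of \cite{SWZ2} the base $H(f)$ is distal, so two-sided proximal pairs in $E$ are automatically same-fiber.
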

\begin{proof}
 It can be proved by the similar arguments as those in  \cite[Lemmas 4.5 and 4.6]{SWZ2}.
\end{proof}

\begin{remark}
{\rm
It deserves to point out that all the statements in \cite[Section 4]{SWZ2} are still valid in our present setting without the reflection symmetry of $f$, except for \cite[Theorem 4.1]{SWZ2}.
}
\end{remark}

Before going further, we give the following definition:

\begin{definition}\label{D:homo-inhomo}
{\rm  A point $u\in X$ is called {\it spatially-homogeneous} if $u(\cdot)$ is independent of the spatial variable $x$. Otherwise, $u$ is called {\it spatially-inhomogeneous}. A subset $A\subset X$ is called {\it spatially-homogeneous} (resp. {\it spatially-inhomogeneous}) if any point in $A$ is spatially-homogeneous (resp. spatially-inhomogeneous).
}
\end{definition}
It is not difficult to see that any minimal set $M$ is either spatially-inhomogeneous; or otherwise, $M$ is spatially-homogeneous.

The following two lemmas {\it summarize} some interesting properties of the minimal set $M$.

\begin{lemma}\label{hyperbolic-minimal}
\begin{itemize}
\item[{\rm (1)}] If $\dim V^c(M)=0$. Then $M$ is spatially-homogeneous and $1$-cover of $H(f)$.

\item[{\rm (2)}] If $\dim V^c(M)=1$, then $M$ is spatially-homogeneous if and only if $\dim V^u(M)=0$. Moreover, if $\dim V^c(M)=1$ and $\dim V^u(M)=0$, then $M$ is an almost $1$-cover of $H(f)$.
\end{itemize}
\end{lemma}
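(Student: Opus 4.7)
The plan is to separate the spatial-homogeneity assertions from the covering assertions. The driving observation for spatial homogeneity is that $v(t,\cdot):=\partial_x\varphi(t,\cdot;u,g)$ solves the linearized equation \eqref{linear-equation2} along any orbit in $M$; once spatial homogeneity is known, \eqref{equation-lim1} restricted to $M$ reduces to a scalar non-autonomous ODE $u'=g(t,u,0)$ on $H(f)$, and the covering statements are handled via classical one-dimensional skew-product theory.

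For (1), fix $(u,g)\in M$. Differentiating the translation identity $\sigma_a\varphi(t,\cdot;u,g)=\varphi(t,\cdot;\sigma_au,g)$ at $a=0$ shows that $v(t,\cdot)=\partial_x\varphi(t,\cdot;u,g)$ is an entire solution of \eqref{linear-equation2} along the orbit of $(u,g)$. Compactness of $M$ in $X\times H(f)$ together with $X\hookrightarrow C^1(S^1)$ makes $v$ bounded on $\mathbb{R}$; by Sacker--Sell theory, a nontrivial bounded entire solution must have nonzero center component, so $\dim V^c(M)=0$ forces $\partial_x u\equiv 0$ for every $(u,g)\in M$, i.e.\ $M$ is spatially-homogeneous. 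The flow on $M$ then reduces to the scalar ODE $u'=g(t,u,0)$ on $H(f)$, for which the hypothesis $0\notin\sigma(M)$ says the mode-$0$ variational cocycle is hyperbolic; the standard one-dimensional order argument, together with Lemma \ref{hyperbolic1}(i) and the arguments of \cite{SWZ,SWZ2}, then forces each fibre $M_g$ to be a singleton, so $M$ is a $1$-cover of $H(f)$.

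For (2), the $(\Leftarrow)$ direction is a zero-number argument. Assume $\dim V^c(M)=1$ and $\dim V^u(M)=0$, and suppose for a contradiction that some $(u,g)\in M$ is spatially-inhomogeneous. As above, $\partial_x u$ is a nontrivial bounded entire solution of \eqref{linear-equation2} along $(u,g)\cdot t$, and therefore lies in $V^c(u,g)$; being one-dimensional, $V^c(u,g)$ is spanned by $\partial_x u$. Apply Lemma \ref{L:zero-inva} with $n_1=n_2=n_0$ (so $V^{n_0,n_0}=V^c$): since $\dim V^{0,n_0-1}=\dim V^u(M)=0$ is even, $N_1=0$, and since $\dim V^{0,n_0}=\dim V^u(M)+\dim V^c(M)=1$ is odd, $N_2=1-1=0$, whence $z(\partial_x u)=0$. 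But $\partial_x u$ is a continuous $2\pi$-periodic function on $S^1$ with zero mean, so it must vanish somewhere---a contradiction unless $\partial_x u\equiv 0$. For the $(\Rightarrow)$ direction and the almost $1$-cover claim, assume $M$ is spatially-homogeneous with $\dim V^c(M)=1$. A Fourier decomposition of \eqref{linear-equation2} decouples the scalar mode-$0$ equation (spectral interval $I_0^{(0)}$) from the $2$-dimensional mode-$k$ equation for each $k\geq 1$ (with spectral interval $I_0^{(0)}-k^2$, as one sees from $\tfrac{d}{dt}(a_k^2+b_k^2)=2(-k^2+g_u)(a_k^2+b_k^2)$). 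Since $V^c(M)$ is one-dimensional it must agree with the mode-$0$ subspace, so $0\in I_0^{(0)}$; any unstable contribution from a mode $k\geq 1$ would require $I_0^{(0)}-k^2\subset(0,\infty)$, contradicting $0\in I_0^{(0)}$, whence $\dim V^u(M)=0$. Under these conditions the mode-$0$ scalar cocycle has nonpositive Sacker--Sell spectrum with $0$ as its upper endpoint, and the classical one-dimensional monotone theory on minimal hulls (cf.\ \cite{SWZ,SWZ2} and the references therein) yields that $M$ is an almost $1$-cover of $H(f)$.

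The main obstacle is the sharpness of the spectral-to-Fourier translation: one must rule out hidden $V^u$ contributions from higher spatial modes and then invoke the correct one-dimensional monotone skew-product principle to upgrade hyperbolicity, respectively a vanishing mode-$0$ upper Lyapunov exponent, to a $1$-cover, respectively an almost $1$-cover, of $H(f)$. The zero-number estimate from Lemma \ref{L:zero-inva} combined with the Fourier-mode spectral analysis is what makes this delicate link tractable.
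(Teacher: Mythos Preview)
Your argument is essentially correct and, in fact, more detailed than the paper's own treatment: the paper simply cites \cite[Theorem 4.1]{SWZ}, \cite[Lemma 4.2]{SWZ2} and \cite[Lemma 5.1]{SWZ2}, noting only that the reflection-symmetry hypothesis in \cite{SWZ2} can be dropped. The ideas you use---that $\partial_x\varphi$ furnishes a bounded entire solution of the linearized equation and hence lies in $V^c$, the zero-number bound from Lemma~\ref{L:zero-inva}, and the Fourier-mode spectral decomposition in the spatially-homogeneous case leading to a scalar ODE reduction---are precisely the mechanisms underlying those cited results, so your route and the paper's (deferred) route coincide in substance.

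One small inaccuracy: in the last step of~(2) you assert that the mode-$0$ Sacker--Sell interval $I_0^{(0)}$ has $0$ as its \emph{upper} endpoint. What your Fourier argument actually yields is only $0\in I_0^{(0)}$; nothing prevents $I_0^{(0)}=[a_0,b_0]$ with $b_0>0$. Fortunately this extra claim is unnecessary: once $M$ is spatially-homogeneous, the flow on $M$ is a one-dimensional monotone skew-product over the minimal base $H(f)$, and the general Shen--Yi theory for such flows (cf.\ \cite{Shen1998}) gives the almost $1$-cover property without any spectral hypothesis on the mode-$0$ cocycle. Likewise, in part~(1) your appeal to the ``standard one-dimensional order argument'' is correct but could be made sharper: since $0\notin\sigma(M)$ implies $0\notin I_0^{(0)}$, the scalar minimal set is uniformly hyperbolic and hence a $1$-cover by the exponential-separation/exponential-attraction dichotomy for hyperbolic minimal sets of scalar equations.
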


\begin{proof}
See \cite[Theorem 4.1]{SWZ} or \cite[Lemma 4.2]{SWZ2} for (1); and see \cite[Lemma 5.1]{SWZ2} for (2). Here, we emphasize that  the proof of \cite[Lemma 5.1]{SWZ2} is only based on  \cite[Theorem 5.1 (ii)]{SWZ2}; while a careful examination yields that \cite[Theorem 5.1 (ii)]{SWZ2} is still valid for $f$ without reflection symmetry.
\end{proof}

 Hereafter, we write $m(u)=\max_{x\in S^1}u(x)$ as the maximal value of $u\in X$ on $S^1$.

\begin{lemma}\label{normhyper-minimal}
Assume that $\dim V^c(M)=1$, or $\dim V^c(M)=2$ with $\dim V^u(M)$ being odd. Then the following hold:
  \begin{itemize}
  \item[\rm (i)] There is a residual invariant set $Y_0\subset H(f)$, such that for any $g\in Y_0$, there exists $u_g\in X$ such that $p^{-1}(g)\cap M\subset (\Sigma u_g,g)$.
   \item[\rm (ii)] If $\dim V^c(M)=1$ with $\dim V^u(M)>0$ (hence $M$ is spatially-inhomogeneous by Lemma \ref{hyperbolic-minimal}{\rm (2)}), then one has $Y_0=H(f)$.
   \item[\rm (iii)] For any $(u,g),(v,g)\in M$ and $a\in S^1$ with $\sigma_a u\ne v$, one has
  \begin{equation*}
    z(\varphi(t,\cdot;\sigma_a u,g)-\varphi(t,\cdot;v,g))=N_u \quad \text{for  all}\ t\in\mathbb{R},
  \end{equation*}
where
  \begin{equation}\label{E:Nu-def}
N_u=\left\{
\begin{split}
 &{\dim} V^u(M),\,\quad\,\,\,\text{ if }{\rm dim}V^u(M)\text{ is even,}\\
 &{\rm dim}V^u(M)+1,\,\text{ if }{\rm dim}V^u(M)\text{ is odd.}
\end{split}\right.
\end{equation}
\item[\rm (iv)] For any $(u,g),(v,g)\in M$, $m(u)=m(v)\Leftrightarrow ([u],g)=([v],g)$.
  \end{itemize}

\end{lemma}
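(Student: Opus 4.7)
The plan is to establish the four items in the order (iii), (iv), (i), (ii): item (iii) is the technical core, (iv) is an immediate consequence via a multiple-zero trick, and (i)-(ii) then follow by passing to the quotient skew-product and exploiting the structure of invariant manifolds. For (iii), I fix $(u,g),(v,g) \in M$ and $a \in S^1$ with $\sigma_a u \ne v$ and study $\zeta(t) := z\bigl(\sigma_a \varphi(t,\cdot;u,g) - \varphi(t,\cdot;v,g)\bigr)$. Since $M$ is compact-invariant, both orbits extend to all of $\mathbb{R}$, and applying Corollary \ref{difference-lapnumber}(a) on right-infinite windows $[t_0,\infty)$ with $t_0 \to -\infty$ shows $\zeta$ is finite, integer-valued and non-increasing on all of $\mathbb{R}$; thus $\zeta(\pm\infty)$ exist with $\zeta(-\infty) \ge \zeta(+\infty)$. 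By minimality of $M$, I pick $t_n \to +\infty$ with $\Pi^{t_n}(u,g) \to (u^+,g^+)$ and (along a subsequence) $\Pi^{t_n}(v,g) \to (v^+,g^+)$ in $M$; when $\sigma_a u^+ \ne v^+$, Lemma \ref{sequence-limit} produces a constant zero number on the limit pair's trajectories, and upper semicontinuity (Lemma \ref{zero-cons-local}) identifies this constant with $\zeta(+\infty)$. To pin $\zeta(+\infty) = N_u$, I use minimality to arrange $\|u^+-v^+\| < \delta^*$; since $0 \in \sigma(M)$ from $\dim V^c(M) \ge 1$, Lemma \ref{hyperbolic1}(ii) provides $w \in M^{cs}(u^+,g^+,\delta^*) \cap M^u(v^+,g^+,\delta^*)$, and Lemma \ref{zerocenter} gives $z(w-u^+) \ge N_u$ and $z(w-v^+) \le N_u - 2$. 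Matching the exponential asymptotics of $\varphi(t,\cdot;w,g^+)$ to $\varphi(t,\cdot;v^+,g^+)$ as $t \to -\infty$ (Remark \ref{stable-leaf}(1)) against the already-established constant zero number on $(u^+,v^+)$'s limit trajectories forces $\zeta(+\infty) = N_u$. A symmetric backward argument gives $\zeta(-\infty) = N_u$, and the non-increasing integer character of $\zeta$ then forces $\zeta(t) \equiv N_u$.

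For (iv), the direction ``$\Leftarrow$'' is clear. For ``$\Rightarrow$'', suppose $m(u) = m(v)$ but $v \notin \Sigma u$; pick $x_0, x_1 \in S^1$ where $u, v$ attain their common maximum and set $b := x_1 - x_0$. Then $\sigma_b u$ and $v$ both attain the common maximum at $x_1$, so $(\sigma_b u - v)(x_1) = 0 = (\sigma_b u - v)'(x_1)$, i.e., a multiple zero at $t = 0$; Corollary \ref{difference-lapnumber}(b) then forces a strict drop of $z(\sigma_b \varphi(t;u,g) - \varphi(t;v,g))$ at $t = 0$, contradicting (iii).

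For (i) and (ii), I pass to the induced skew-product $\tilde \Pi^t$ on $\tilde X \times H(f)$ from Section 4 and set $\tilde M := \{([u],g) : (u,g) \in M\}$, a compact minimal invariant set. The map $[u] \mapsto m(u)$ is continuous on $\tilde M$ and, by (iv), injective on each fiber, so the fiber cardinality $n(g) := |\tilde M \cap p^{-1}(g)|$ is upper semicontinuous, invariant along $H(f)$-orbits, and attains its minimum $n_0$ on an invariant residual set $Y_0 \subset H(f)$. For $\dim V^c(M) = 1$ with $\dim V^u(M) = 0$, $n_0 = 1$ is Lemma \ref{hyperbolic-minimal}(2); in the remaining cases I plan to use (iii) together with a fiber-continuity argument to rule out $n_0 \ge 2$, yielding (i). For (ii), under $\dim V^c(M) = 1$ and $\dim V^u(M) > 0$ (so $M$ is spatially-inhomogeneous by Lemma \ref{hyperbolic-minimal}(2)), the 1-dimensional center manifold $M^c(\omega,\delta^*)$ at $\omega = (u_0,g_0) \in M$ is tangent to $\partial_x u_0 \ne 0$ and hence locally coincides with the $\Sigma$-orbit through $u_0$; any fiber-point $v$ with $v \notin \Sigma u_0$ would have to lie off $M^c$ (in $M^{cs}$ or $M^{cu}$), and the bounds in Lemma \ref{zerocenter}(2),(3) combined with $\zeta \equiv N_u$ from (iii) would produce a strict zero-number inequality contradicting constancy. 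The principal obstacle throughout is the identification $\zeta(\pm\infty) = N_u$ in (iii) --- especially in the $\dim V^c(M) = 2$, $\dim V^u(M)$-odd regime not covered directly by Lemma \ref{zerocenter} --- where one must compensate with a bespoke argument built on intersections of invariant manifolds and careful tracking of zero numbers along $w$'s orbit.
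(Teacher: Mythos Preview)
The paper does not prove this lemma in the text: it simply cites \cite{SWZ} (Theorem~3.1 for (i)--(ii) and Corollary~3.9 for (iii)--(iv)). So there is no in-paper argument to compare your proposal against; what you have is an attempted reconstruction of the cited results.

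Your overall architecture --- prove (iii) first via zero-number monotonicity plus invariant-manifold geometry, derive (iv) by a multiple-zero contradiction, then pass to the quotient flow $\tilde\Pi^t$ for (i)--(ii) --- is the right shape and matches the strategy of \cite{SWZ}. But the core step, pinning $\zeta(\pm\infty)=N_u$, is not actually carried out. Concretely: from $w\in M^{cs}(u^+,g^+,\delta^*)\cap M^u(v^+,g^+,\delta^*)$ and Lemma~\ref{zerocenter} you obtain $z(w-u^+)\ge N_u$ and $z(w-v^+)\le N_u-2$, but neither inequality bounds $z(u^+-v^+)$ directly; your phrase ``matching the exponential asymptotics \dots\ forces $\zeta(+\infty)=N_u$'' hides the missing link. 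Going \emph{backward} in time along $w$ toward $v^+$, the zero number $z(\varphi(t,\cdot;w,g^+)-\varphi(t,\cdot;u^+,g^+))$ may \emph{increase}, so the lower bound at $t=0$ does not transfer; and in the case $\dim V^c(M)=1$ with $\dim V^u(M)$ even, Lemma~\ref{zerocenter}(3) gives bounds only for $M^s$, $M^c$, $M^{cu}$, so $w\in M^{cs}$ is the wrong intersection to use. The argument in \cite{SWZ} instead exploits that $\partial_x u\in V^c(\omega)$ for spatially-inhomogeneous $u$ (this is where $\dim V^c\le 2$ and the parity of $\dim V^u$ enter, via Lemma~\ref{L:zero-inva}), so that $z(\sigma_a u-u)=N_u$ for small $a\ne 0$; constancy over the whole minimal set then follows from Lemma~\ref{sequence-limit} and a connectedness argument on $S^1$, without the delicate manifold-intersection step you propose.

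You yourself flag the $\dim V^c(M)=2$, $\dim V^u(M)$ odd regime as ``not covered directly by Lemma~\ref{zerocenter}'' and requiring a ``bespoke argument''. That is precisely where the work lies, and your proposal does not supply it.
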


\begin{proof}
  See \cite[Theorem 3.1]{SWZ} for (i)-(ii) and \cite[Corollary 3.9]{SWZ} for (iii)-(iv).
\end{proof}

\medskip

Now we are focusing on the $\omega$-limit set $\O$. For convenience, we introduce the following standing assumptions:
\smallskip

\noindent {\bf (H0)} {\it  $\dim V^c(\Omega)=0$, that is, $\Omega$ is hyperbolic.}

\smallskip

\noindent {\bf (H1)} {\it  $\dim V^c(\Omega)=1$.}

\smallskip

\noindent {\bf (H2)} {\it $\dim V^c(\Omega)=2$ and $\dim V^u(\Omega)$ is odd.}

\medskip

\begin{lemma}\label{homogeneous}
  Assume {\bf (H1)} and $\dim V^u(\Omega)>0$. Let $M\subset\Omega$ be a minimal set. Then $\dim V^c(M)\le 1$ and $\dim V^u(M)>0$. Furthermore,

 {\rm (a)} If $\dim V^c(M)=1$, then  $M$ is spatially-inhomogeneous;
 and moreover, there is $\delta^*>0$ such that $M^c(\omega,\delta^*)\subset \Sigma u$ for any $\omega=(u,g)\in M$.

 {\rm (b)} If $\dim V^c(M)=0$, then $M$ is a spatially-homogeneous $1$-cover of $H(f)$; and moreover, one has
  \begin{equation*}
\left\{
\begin{split}
 &\dim V^u(M)=\dim V^u(\Omega)\textnormal{ and }\mathrm{codim} V^s(M)=\mathrm{codim}V^s(\Omega)-1,\,\quad\,\text{ if }{\rm dim}V^u(\Omega)\text{ is odd;}\\
 &\dim V^u(M)=\dim V^u(\Omega)+1\textnormal{ and } \mathrm{codim} V^s(M)=\mathrm{codim}V^s(\Omega),\quad\,\text{ if }{\rm dim}V^u(\Omega)\text{ is even.}
\end{split}\right.
\end{equation*}
\end{lemma}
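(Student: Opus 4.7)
The plan is to first extract the dimensional inequalities from the inclusion $M\subset\Omega$, and then to handle the two cases $\dim V^c(M)=1$ and $\dim V^c(M)=0$ separately. Since $M\subset\Omega$ is minimal, Remark \ref{invari-space} gives $\sigma(M)\subset\sigma(\Omega)$, so $\dim V^c(M)\le\dim V^c(\Omega)=1$ and $\dim V^u(M)\ge\dim V^u(\Omega)>0$. Thus $\dim V^c(M)\in\{0,1\}$ and $\dim V^u(M)>0$, which establishes the first assertion.

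For part (a), assume $\dim V^c(M)=1$. Lemma \ref{hyperbolic-minimal}(2) says $M$ is spatially-homogeneous if and only if $\dim V^u(M)=0$; since $\dim V^u(M)>0$, $M$ must be spatially-inhomogeneous. For the inclusion $M^c(\omega,\delta^*)\subset\Sigma u$, I would first observe that for any $\omega=(u,g)\in M$ the function $u_x$ is non-trivial (by spatial inhomogeneity of $M$) and satisfies the linearized equation \eqref{linear-equation2}: differentiating \eqref{equation-lim1} in $x$ yields $\partial_x\varphi(t,\cdot;u,g)=\Psi(t,\omega)u_x$. Since $\varphi(t,\cdot;u,g)$ stays in the compact set $M$, the trajectory $\{\Psi(t,\omega)u_x\}_{t\in\mathbb{R}}$ is bounded in $X$, so $u_x$ has zero Sacker--Sell exponent and hence $u_x\in V^c(\omega)$. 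As $\dim V^c(\omega)=1$, one gets $V^c(\omega)=\mathrm{span}(u_x)$. The group orbit $\Sigma u$ is a smooth closed curve through $u$ tangent to $V^c(\omega)$, and by translation invariance $\varphi(t,\cdot;\sigma_a u,g)=\sigma_a\varphi(t,\cdot;u,g)$, so $\Sigma u$ is locally invariant under the semiflow. Lemma \ref{normhyper-minimal}(ii) gives $p^{-1}(g)\cap M=\Sigma u$, and combining this with the attraction/repulsion characterizations of $M^{cs}$ and $M^{cu}$ in Remark \ref{stable-leaf}(2)(3), I would argue that for $\delta^*$ small enough the points of the abstract center manifold $M^c(\omega,\delta^*)$ must lie in $M\cap p^{-1}(g)=\Sigma u$.

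For part (b), assume $\dim V^c(M)=0$. Lemma \ref{hyperbolic-minimal}(1) directly yields that $M$ is spatially-homogeneous and a $1$-cover of $H(f)$. For the dimensional identities, I would exploit the parity constraint of Lemma \ref{hyperbolic2}: since $\dim V^c(M)=0$ and $\dim V^u(M)>0$, $\dim V^u(M)$ must be odd. The $1$-dimensional center spectral bundle of $\Omega$ can be absorbed upon restriction to $M$ either into $V^u(M)$ (giving $\dim V^u(M)=\dim V^u(\Omega)+1$ and $\mathrm{codim}\,V^s(M)=\mathrm{codim}\,V^s(\Omega)$) or into $V^s(M)$ (giving $\dim V^u(M)=\dim V^u(\Omega)$ and $\mathrm{codim}\,V^s(M)=\mathrm{codim}\,V^s(\Omega)-1$). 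When $\dim V^u(\Omega)$ is odd, the first alternative would force $\dim V^u(M)$ to be even, contradicting Lemma \ref{hyperbolic2}; hence the second must hold. When $\dim V^u(\Omega)$ is even, the second alternative would force $\dim V^u(M)$ to be even, again contradicting Lemma \ref{hyperbolic2}; hence the first must hold.

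The main obstacle I expect is the inclusion $M^c(\omega,\delta^*)\subset\Sigma u$ in part (a), since center manifolds are generally non-unique and tangency alone at $u$ cannot identify $M^c$ with $\Sigma u$. The plan is to sidestep this non-uniqueness by exploiting the explicit bundle structure $p^{-1}(g)\cap M=\Sigma u$ from Lemma \ref{normhyper-minimal}(ii) together with the forward attraction/backward repulsion properties of $M^{cs}$ and $M^{cu}$ in Remark \ref{stable-leaf}, rather than attempting to prove center-manifold uniqueness directly. Part (b) then becomes essentially immediate once Lemma \ref{hyperbolic2} is invoked to enforce the odd parity of $\dim V^u(M)$.
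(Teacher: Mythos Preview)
Your overall strategy coincides with what the paper intends (it simply refers to \cite[Lemma 5.2]{SWZ2}): use Remark \ref{invari-space} for the dimension inequalities, Lemma \ref{hyperbolic-minimal} for the homogeneity dichotomy, and the parity constraint of Lemma \ref{hyperbolic2} for part (b). Part (b) is handled correctly.

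The gap is in part (a), specifically the argument for $M^c(\omega,\delta^*)\subset\Sigma u$. Two points. First, Lemma \ref{normhyper-minimal}(ii) gives only $p^{-1}(g)\cap M\subset(\Sigma u_g,g)$, not equality. Second, and more importantly, your plan to trap $M^c(\omega,\delta^*)$ inside $M\cap p^{-1}(g)$ cannot work as stated: the local center manifold $M^c(\omega,\delta^*)$ is an object built from the linearization at $\omega$ and there is no reason a priori for its points to lie in the minimal set $M$ (or even in $\Omega$). The repulsion/attraction characterizations in Remark \ref{stable-leaf}(2)(3) tell you that a point whose full orbit stays $\delta^*$-close to that of $u$ lies in $M^{cs}$ and in $M^{cu}$, but they do not force membership in $M$.

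The fix is more direct and uses exactly the observation you already made, that $V^c(\omega)=\mathrm{span}(u_x)$. The curve $a\mapsto\sigma_a u$ is a $C^1$ one-dimensional manifold through $u$ tangent to $V^c(\omega)$; by translation invariance $\varphi(t,\cdot;\sigma_a u,g)=\sigma_a\varphi(t,\cdot;u,g)$, so this curve is locally invariant and, since $\|\sigma_a\varphi(t,\cdot;u,g)-\varphi(t,\cdot;u,g)\|$ stays uniformly small for $|a|$ small and all $t\in\mathbb{R}$, orbits on it neither grow nor decay relative to the base orbit. Hence $\Sigma u$ itself satisfies the defining properties of a local center manifold at $\omega$, and one may simply \emph{choose} $M^c(\omega,\delta^*)$ to be the piece of $\Sigma u$ near $u$ (the statement only asserts existence of such a $\delta^*$, not uniqueness of $M^c$). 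This is the argument carried out in \cite[Lemma 5.2]{SWZ2}; no appeal to $p^{-1}(g)\cap M$ is needed.
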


\begin{proof}
 It can be proved by the similar arguments in \cite[Lemma 5.2]{SWZ2}. Here, one needs to note that in item (a), $M$ is not necessarily a $1$-cover of $H(f)$.
\end{proof}
\begin{lemma}\label{proximalcons}
Assume {\bf (H1)} and $\dim V^u(\Omega)>0$. Let $M\subset\Omega$ be a minimal set. Then for any $(u_1,g)\in M$ and $(u_2,g)\in \Omega\setminus M$,  $\{(u_1,g),(u_2,g)\}$ is not two sided proximal pair.
\end{lemma}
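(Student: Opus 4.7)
The plan is to argue by contradiction. Assume $(u_1,g) \in M$ and $(u_2,g) \in \Omega \setminus M$ are two-sided proximal, witnessed by sequences $t_n \to +\infty$ and $s_n \to -\infty$ together with a backward orbit $\psi$ of $u_2$ in $\Omega$. Passing to subsequences by compactness, $\Pi^{t_n}(u_1,g) \to (v_+,g_+) \in M$ and $\Pi^{s_n}(u_1,g) \to (v_-,g_-) \in M$, and the proximity then forces $\Pi^{t_n}(u_2,g) \to (v_+,g_+)$ and $\psi(s_n) \to (v_-,g_-)$. Lemma \ref{homogeneous} splits the argument into Case~(a) $\dim V^c(M) = 1$ with $M$ spatially-inhomogeneous, and Case~(b) $\dim V^c(M) = 0$ with $M$ a spatially-homogeneous $1$-cover of $H(f)$ (in which case $\dim V^u(M)$ is odd by Lemma \ref{hyperbolic2}).

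In Case~(a) the key device is to exploit the $S^1$-action together with Lemma \ref{sequence-limit}. Since $v_{\pm}$ are spatially-inhomogeneous, for small $a \neq 0$ one has $\sigma_a v_{\pm} \neq v_{\pm}$. Applying Lemma \ref{sequence-limit} to the pair $(\sigma_a u_1,g)$ and $(u_2,g)$ converging along $t_n$ to the now \emph{distinct} limits $(\sigma_a v_+,g_+),(v_+,g_+) \in M$, and combining with Lemma \ref{normhyper-minimal}(iii), yields
\[
z\bigl(\varphi(t,\cdot;\sigma_a u_1,g) - \varphi(t,\cdot;u_2,g)\bigr) = N_u
\]
for all sufficiently large $t$, with $N_u$ as in \eqref{E:Nu-def}. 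The mirrored argument using $\psi$ along $s_n$ gives the same equality for sufficiently negative $t$. Letting $a \to 0$, continuous dependence together with Lemma \ref{zero-cons-local}---applied at times where the unshifted difference has only simple zeros, guaranteed by Corollary \ref{difference-lapnumber}(c)---transfers this to $z(\varphi(t,\cdot;u_1,g) - \varphi(t,\cdot;u_2,g)) = N_u$ for $|t|$ large, and the monotonicity of Corollary \ref{difference-lapnumber}(a),(b) then forces the equality on all of $\mathbb{R}$, with the difference having only simple zeros at every time.

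In Case~(b) the shift trick collapses since $\sigma_a u = u$ on $M$, so I replace it by the intersection construction. Since $0 \in \sigma(\Omega)$, Lemma \ref{hyperbolic1}(ii) yields, for $n$ large, $u_n^* \in M^{cs}(\Pi^{t_n}(u_1,g),\delta^*) \cap M^u(\Pi^{t_n}(u_2,g),\delta^*)$; Lemma \ref{zerocenter} applied with $E = \Omega$ then provides $z(u_n^* - \varphi(t_n,\cdot;u_1,g)) \geq N_u(\Omega)$ from the center-stable side and $z(u_n^* - \varphi(t_n,\cdot;u_2,g)) \leq N_u(\Omega) - 2$ from the unstable side. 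Propagating these bounds along the auxiliary orbit $\varphi(t,\cdot;u_n^*,g\cdot t_n)$ using Corollary \ref{difference-lapnumber}(a), and combining with the parallel backward construction at $s_n$ via Lemma \ref{hyperbolic1}(ii), produces integer inequalities that conflict with the exact relation between $\dim V^u(M)$ and $\dim V^u(\Omega)$ dictated by Lemma \ref{homogeneous}(b).

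In Case~(a) the contradiction is extracted at $t = t_n$: the difference $w(t_n)$ tends to $0$ in $X$ while carrying $N_u \geq 1$ simple zeros. After $X$-normalization, parabolic smoothing together with the compact embedding $X \hookrightarrow C^1(S^1)$ yields a non-trivial subsequential limit $\tilde w_\infty$ that solves the linearization of \eqref{equation-lim1} along the $M$-orbit through $(v_+,g_+)$ and inherits $z(\tilde w_\infty(t)) = N_u$. By Remark \ref{stable-leaf}(2)--(3), the initial direction $\tilde w_\infty(0)$ must live in the center-stable spectral subspace at $(v_+,g_+)$ associated with $\Omega$; Lemma \ref{zerocenter} applied with $E = \Omega$ then assigns to such a direction a zero number keyed to $\dim V^c(\Omega) = 1$ and $\dim V^u(\Omega)$, which via Remark \ref{invari-space} is incompatible with $N_u = N_u(M)$. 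The chief obstacle is this final step: verifying non-triviality of $\tilde w_\infty(0)$ after normalization requires a careful parabolic regularity argument, and correctly pinning down its spectral location relative to $\Omega$ (as opposed to $M$) is precisely what drives the numerical contradiction between $N_u(\Omega)$ and $N_u(M)$.
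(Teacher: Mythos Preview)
Your Case~(a) contains a genuine gap at the final step. After correctly establishing
\[
z\bigl(\varphi(t,\cdot;u_1,g)-\varphi(t,\cdot;u_2,g)\bigr)=N_u\quad\text{for all }t\in\mathbb{R},
\]
your normalization argument does not produce a contradiction. In Case~(a) one has $\dim V^c(M)=\dim V^c(\Omega)=1$, and Remark~\ref{invari-space} then forces $\dim V^u(M)=\dim V^u(\Omega)$; hence $N_u(M)=N_u(\Omega)$ and there is no numerical mismatch. Even granting the delicate regularity needed to extract a non-trivial limit $\tilde w_\infty$ and place it in $V^{cs}$ at $(v_+,g_+)$, Lemma~\ref{L:zero-inva} (or Lemma~\ref{zerocenter}) permits $z=N_u$ on that subspace, so nothing is violated. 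The sentence ``which via Remark~\ref{invari-space} is incompatible with $N_u=N_u(M)$'' is simply false here.

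The argument the paper has in mind (visible in full in the closely parallel Lemma~\ref{inducedproxi}, Case~(ii)) replaces your normalization by the intersection construction together with the key structural fact of Lemma~\ref{homogeneous}(a) that $M^c(\omega,\delta^*)\subset\Sigma u$ for $\omega=(u,g)\in M$. One takes, e.g., $v_n\in M^u(\Pi^{t_n}(u_2,g),\delta^*)\cap M^{cs}(\Pi^{t_n}(u_1,g),\delta^*)$ from Lemma~\ref{hyperbolic1}(ii), shows $v_n\notin M^c(\Pi^{t_n}(u_1,g),\delta^*)$ (otherwise pulling back along the $u_2$-unstable manifold would force $u_2\in\Sigma u_1$), and then invokes the stable-leaf foliation of Remark~\ref{stable-leaf}(4): $v_n$ lies on the stable leaf through some \emph{shifted} base point $\sigma_{\tilde a_n}\varphi(t_n,\cdot;u_1,g)$. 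Lemma~\ref{zerocenter} applied to that leaf (not merely to $M^{cs}$) yields $z(v_n-\sigma_{\tilde a_n}\varphi(t_n,\cdot;u_1,g))\geq N_u+2$ when $\dim V^u(\Omega)$ is even, or the dual bound $\leq N_u-2$ via the backward construction when it is odd. Pulling $v_n$ back to time $0$ along the $u_2$-unstable manifold and combining with the constant $z=N_u$ on the whole family $\{\sigma_a u_1-u_2\}_{a\in S^1}$ (via Lemma~\ref{zero-cons-local}) then gives $z=N_u$, the contradiction. The foliation step---passing from $M^{cs}$ down to a genuine stable leaf over a shifted base point---is exactly what your normalization misses and is what manufactures the gap of $2$ in the zero number. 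Your Case~(b) outline is closer in spirit; the clean version (cf.\ Sub-Lemma~1 in Section~\ref{Proof-Thm5.2}) uses the hyperbolicity of $M$ directly to identify $\tilde M^s,\tilde M^u$ with $M^{cs}(\Omega)$ or $M^s(\Omega)$ (resp.\ $M^{cu}(\Omega)$ or $M^u(\Omega)$) according to the parity in Lemma~\ref{homogeneous}(b), after which Lemma~\ref{zerocenter} and monotonicity finish.
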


\begin{proof}
It can be proved by the similar arguments as those in  \cite[Lemma 5.3]{SWZ2}. Here, it also deserves to point out that, in the proof of \cite[Lemma 5.3]{SWZ2}, $M$ is actually not needed to be a $1$-cover of $H(f)$ whenever it is spatially-inhomogeneous.
\end{proof}
\begin{lemma}\label{homogeneous-2d}
  Assume that {\bf (H2)} holds and $M\subset\Omega$ is a minimal set. Then $\dim V^c(M)\le 2$ and $\dim V^u(M)>0$.  Furthermore,

 {\rm (a)} If $\dim V^c(M)=1$, then $M$ is spatially-inhomogeneous and one has
 \begin{equation}\label{dimension1}
\begin{split}
 &\dim V^u(M)=\dim V^u(\Omega)\textnormal{ and }\mathrm{codim} V^s(M)=\mathrm{codim}V^s(\Omega)-1,\quad {\rm or}\\
 &\dim V^u(M)=\dim V^u(\Omega)+1\textnormal{ and } \mathrm{codim} V^s(M)=\mathrm{codim}V^s(\Omega);
\end{split}
\end{equation}
and moreover, there is $\delta^*>0$ such that $M^c(\omega,\delta^*)\subset \Sigma u$ for any $\omega=(u,g)\in M$.

 {\rm (b)} If $\dim V^c(M)=0$,  then $M$ is a spatially-homogeneous $1$-cover of $H(f)$; and moreover, one has
 \begin{equation}\label{dimension2}
\begin{split}
 &\dim V^u(M)=\dim V^u(\Omega)\textnormal{ and }\mathrm{codim} V^s(M)=\mathrm{codim}V^s(\Omega)-2,\quad {\rm or}\\
 &\dim V^u(M)=\dim V^u(\Omega)+2\textnormal{ and } \mathrm{codim} V^s(M)=\mathrm{codim}V^s(\Omega).
\end{split}
\end{equation}
\end{lemma}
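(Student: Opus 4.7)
The plan is to adapt the scheme of Lemma~\ref{homogeneous} (which in turn rests on \cite[Lemma~5.2]{SWZ2}) to the two-dimensional center-space setting of $(\mathbf{H2})$. From Remark~\ref{invari-space} one immediately has $\dim V^c(M)\le\dim V^c(\Omega)=2$ and $\dim V^u(M)\ge\dim V^u(\Omega)$. Since $(\mathbf{H2})$ forces $\dim V^u(\Omega)$ to be odd (hence $\ge 1$), this also gives $\dim V^u(M)\ge 1>0$. The remaining content, namely (a) and (b), then splits according to $\dim V^c(M)\in\{0,1\}$ (the case $\dim V^c(M)=2$ is not addressed by the lemma statement and needs no treatment here).

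In Case (a) with $\dim V^c(M)=1$: since $\dim V^u(M)>0$, Lemma~\ref{hyperbolic-minimal}(2) directly forces $M$ to be spatially-inhomogeneous. The dimension formula follows from $\mathrm{codim}\,V^s(\cdot)=\dim V^c(\cdot)+\dim V^u(\cdot)$ combined with the Remark~\ref{invari-space} bounds:
\begin{equation*}
\dim V^u(M)+1=\mathrm{codim}\,V^s(M)\le \mathrm{codim}\,V^s(\Omega)=\dim V^u(\Omega)+2,
\end{equation*}
so $\dim V^u(\Omega)\le\dim V^u(M)\le\dim V^u(\Omega)+1$, which yields precisely the two alternatives in \eqref{dimension1}. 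For $M^c(\omega,\delta^*)\subset \Sigma u$, note that because $M$ is spatially-inhomogeneous, $u_x\not\equiv 0$ for any $\omega=(u,g)\in M$; differentiation of the identity $\varphi(t,\cdot;\sigma_a u,g)=\sigma_a\varphi(t,\cdot;u,g)$ in $a$ at $a=0$ shows that $u_x$ is a bounded solution on $\mathbb{R}$ of the linearized equation \eqref{linear-equation2}, so by Sacker-Sell theory $u_x\in V^c(u,g)$, and since $\dim V^c(u,g)=1$ this vector spans $V^c(u,g)$. Hence $\Sigma u$ is a flow-invariant $C^1$-curve through $u$ tangent to $V^c(u,g)$; the local inclusion $M^c(\omega,\delta^*)\subset \Sigma u$ then follows from the attracting/repelling characterizations of $M^{cs}$, $M^{cu}$ in Remark~\ref{stable-leaf}(2)--(3), together with the zero-number identities in Lemma~\ref{zerocenter} and Lemma~\ref{normhyper-minimal}(iii), exactly as in \cite[Lemma~5.2]{SWZ2}.

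In Case (b) with $\dim V^c(M)=0$: Lemma~\ref{hyperbolic-minimal}(1) immediately gives that $M$ is spatially-homogeneous and a $1$-cover of $H(f)$. A parallel dimension count gives $\dim V^u(M)=\mathrm{codim}\,V^s(M)\le \dim V^u(\Omega)+2$, whence $\dim V^u(\Omega)\le\dim V^u(M)\le\dim V^u(\Omega)+2$. The essential new ingredient compared with Lemma~\ref{homogeneous}(b) is a parity argument: Lemma~\ref{hyperbolic2} applies (because $\dim V^c(M)=0$ and $\dim V^u(M)>0$) and forces $\dim V^u(M)$ to be odd, and since $(\mathbf{H2})$ also makes $\dim V^u(\Omega)$ odd, the difference $\dim V^u(M)-\dim V^u(\Omega)$ must be even, hence equal to $0$ or $2$. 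The two resulting pairings with $\mathrm{codim}\,V^s(M)$ yield exactly the two alternatives in \eqref{dimension2}.

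The main obstacle will be a rigorous justification of $M^c(\omega,\delta^*)\subset \Sigma u$ in Case (a): center manifolds are non-unique in general, so one cannot simply equate two 1D curves sharing a tangent line at $u$. The point is that $\Sigma u$ is a genuinely flow-invariant, compactly embedded one-dimensional curve whose tangent at $u$ spans $V^c(u,g)$; coupling this rigidity with the repulsion of $M^{cs}$ and the attraction of $M^{cu}$ (Remark~\ref{stable-leaf}(2)--(3)) pins any center orbit starting near $u$ onto $\Sigma u$. Since the argument in \cite[Lemma~5.2]{SWZ2} is entirely local around $u$ and uses only $\dim V^c(M)=1$ (and not the precise value of $\dim V^c(\Omega)$), it transfers verbatim to our $\dim V^c(\Omega)=2$ setting.
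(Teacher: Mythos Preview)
Your proposal is correct and follows essentially the same route as the paper: the opening bounds from Remark~\ref{invari-space}, the appeal to Lemma~\ref{hyperbolic-minimal}(2) for spatial inhomogeneity and the dimension count in Case~(a), the citation of \cite{SWZ2} for the inclusion $M^c(\omega,\delta^*)\subset\Sigma u$, and in Case~(b) the use of Lemma~\ref{hyperbolic-minimal}(1) together with the parity constraint from Lemma~\ref{hyperbolic2}. The paper actually cites \cite[Lemma~5.3]{SWZ2} rather than Lemma~5.2 for the center-manifold inclusion, but the underlying local argument is the one you describe, and your added explanation that $u_x$ spans $V^c(u,g)$ makes the invocation more transparent than the paper's bare reference.
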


\begin{proof}
By Remark \ref{invari-space}, it is clear that $\dim V^c(M)\le 2$ and $\dim V^u(M)>0$.

If $\dim V^c(M)=1$, by Lemma \ref{hyperbolic-minimal}(2), $M$ is spatially-inhomogeneous and \eqref{dimension1} is established. Again, by the same arguments in \cite[Lemma 5.3]{SWZ2}, one can find a $\delta^*>0$ (independent the choose of $\omega\in M$) such that $M^c(\omega,\delta^*)\subset \Sigma u$ for any $\omega=(u,g)\in M$.

If $\dim V^c(M)=0$, then it follows from Lemma \ref{hyperbolic-minimal}(1) that $M$ is a spatially-homogeneous $1$-cover of $H(f)$. Moreover, by Lemma \ref{hyperbolic2}, $\dim V^u(M)$ must be odd. Therefore, we obtain \eqref{dimension2}.
\end{proof}

\begin{lemma}\label{constant-ontwoset}
 Assume that one of assumptions {\bf (H0)-(H2)} holds. Let $M_1, M_2\subset \Omega$ be two minimal sets with $\Sigma M_1\cap M_2=\emptyset$. Then, there exists an integer $N\in \mathbb{N}$ such that \begin{equation}\label{constant-0}
z(\varphi(t,\cdot;\sigma_{a_1}u_1,g)-\varphi(t,\cdot;\sigma_{a_2}u_2,g))=N,
\end{equation}
for any $t\in \mathbb{R}$, $g\in H(f)$, $(u_i,g)\in M_i$ and $a_i\in S^1$, $i=1,2$.
\end{lemma}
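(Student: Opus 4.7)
By the translation invariance $\varphi(t,\cdot;\sigma_a u,g)=\sigma_a\varphi(t,\cdot;u,g)$ and $z\circ\sigma_a=z$, the shifts in \eqref{constant-0} enter only through $a_2-a_1$, so it suffices to show that $N_0(t):=z(\varphi(t,\cdot;u_1,g)-\varphi(t,\cdot;\tilde u_2,g))$ is the same constant $N$ for every $(u_1,g)\in M_1$, $(\tilde u_2,g)\in\Sigma M_2$ and $t\in\mathbb{R}$. The hypothesis $\Sigma M_1\cap M_2=\emptyset$ yields $\sigma_a u_1\neq u_2'$ for any $(u_1,g)\in M_1$, $(u_2',g)\in M_2$ and $a\in S^1$, so the two solutions are everywhere distinct and Corollary~\ref{difference-lapnumber} makes $N_0(t)$ finite, non-increasing, and eventually constant as $t\to\pm\infty$; write $N^+\le N^-$ for these eventual values. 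The work is to show $N^+=N^-$ and that the resulting $N$ does not depend on the pair.

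\textbf{Step 1 (constancy in $t$).} I would introduce the joint flow $\hat\Pi^t(u,v,h):=(\varphi(t,\cdot;u,h),\varphi(t,\cdot;v,h),h\cdot t)$ on $X\times X\times H(f)$ and form the orbit closure $\mathcal{K}$ of $(u_1,\tilde u_2,g)$. Since $\mathcal{K}$ is compact and $\hat\Pi^t$-invariant, Zorn's lemma gives a minimal subset $\mathcal{M}\subset\mathcal{K}$, and every $(w_1,w_2,h)\in\mathcal{M}$ is recurrent with $w_1\neq w_2$ (again by disjointness). The central claim is that any such $(w_1,w_2,h)$ is simultaneously an $\omega$- and $\alpha$-limit point of the original orbit: one side is automatic from $\mathcal{M}\subset\mathcal{K}$, and for the other, if $\hat\Pi^{t_n}(u_1,\tilde u_2,g)\to(w_1,w_2,h)$ with say $t_n\to+\infty$, the recurrence of $(w_1,w_2,h)$ furnishes $r_m\to-\infty$ with $\hat\Pi^{r_m}(w_1,w_2,h)\to(w_1,w_2,h)$; a diagonal choice (pick $n=n(m)$ first so $\hat\Pi^{t_{n(m)}}(u_1,\tilde u_2,g)$ is $1/m$-close to $(w_1,w_2,h)$, then take $r_m$ with $|r_m|$ much larger than $t_{n(m)}$) produces $s_m:=t_{n(m)}+r_m\to-\infty$ with $\hat\Pi^{s_m}(u_1,\tilde u_2,g)\to(w_1,w_2,h)$. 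Applying Lemma~\ref{sequence-limit} to both sequences yields $z(\varphi(t,\cdot;w_1,h)-\varphi(t,\cdot;w_2,h))\equiv\tilde N$ for all $t\in\mathbb{R}$. Choosing $\tau$ so large that this limit difference has only simple zeros (Lemma~\ref{zero-number}(c)) and transporting via Lemma~\ref{zero-cons-local} gives $N_0(t_n+\tau)=N_0(s_m+\tau)=\tilde N$ for large $n,m$; since $t_n+\tau\to+\infty$ and $s_m+\tau\to-\infty$ this forces $N^+=\tilde N=N^-$, so $N_0\equiv N$.

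\textbf{Step 2 (independence of the pair), and the main obstacle.} To see that $N(u_1,\tilde u_2,g)$ does not depend on the chosen pair, take a second admissible pair $(u_1',\tilde u_2',g')$. Minimality of $H(f)$ provides $t_k$ with $g\cdot t_k\to g'$; passing to a subsequence, $\hat\Pi^{t_k}(u_1,\tilde u_2,g)\to(v_1,v_2,g')$ with $(v_1,g')\in M_1$ and $(v_2,g')\in\Sigma M_2$. The Lemma~\ref{sequence-limit}--Lemma~\ref{zero-cons-local} propagation from Step~1 then gives $N(v_1,v_2,g')=N(u_1,\tilde u_2,g)$. To transfer from $(v_1,v_2,g')$ to $(u_1',\tilde u_2',g')$ inside the $g'$-fiber, I would invoke Lemma~\ref{normhyper-minimal}(i),(iii), whose intra-minimal-set conclusions — combined with the structural dichotomies of $M_i$-fibers supplied by Lemmas~\ref{hyperbolic-minimal}, \ref{homogeneous} and \ref{homogeneous-2d} under (H0)--(H2) — express $u_1'$ as $\sigma_a v_1$ and $\tilde u_2'$ as $\sigma_b v_2$ (for residually many $g'$, and always when Lemma~\ref{normhyper-minimal}(ii) applies), reducing the comparison to the known intra-$M_i$ zero-count $N_u(M_i)$. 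The principal technical hurdle is the diagonal construction in Step~1 upgrading one-sided accumulation at $\mathcal{M}$ to two-sided accumulation: because $\hat\Pi^r$ is not uniformly continuous as $r\to-\infty$, the approximations $\hat\Pi^{t_n}\to(w_1,w_2,h)$ and $\hat\Pi^{r_m}(w_1,w_2,h)\to(w_1,w_2,h)$ must be interleaved in the correct order. A secondary difficulty in Step~2 is that the joint orbit closure $\mathcal{K}$ need not cover all of $M_1\times_{H(f)}\Sigma M_2$, which is exactly the gap filled by Lemma~\ref{normhyper-minimal} together with the (H0)--(H2) structure theorems.
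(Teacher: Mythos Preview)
Your Step~1 has a genuine gap in the diagonal argument. You want to show that a minimal point $(w_1,w_2,h)\in\mathcal{M}\subset\mathcal{K}$ lies in \emph{both} $\omega(p)$ and $\alpha(p)$ for $p=(u_1,\tilde u_2,g)$, but the interleaving you propose does not work. In your stated order---choose $n(m)$ first so that $\hat\Pi^{t_{n(m)}}(p)$ is $1/m$-close to $(w_1,w_2,h)$, then choose $r_m$ very negative---you have no control over $\hat\Pi^{r_m}$ applied to that $1/m$-neighborhood, because the modulus of continuity of $\hat\Pi^{r_m}$ depends on $r_m$. Reversing the order (fix $r_m$, find $\delta_m$ from continuity, then pick $t_{n(m)}$ in the $\delta_m$-ball) destroys the requirement $s_m=t_{n(m)}+r_m\to-\infty$: once $r_m$ and $\delta_m$ are fixed, the first return of the forward orbit to the $\delta_m$-ball may occur only at times $\gg|r_m|$. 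In fact there is no abstract reason a minimal subset of the joint orbit closure should sit in both the $\omega$- and $\alpha$-limit sets---a heteroclinic connection in the joint flow would separate them---so this step cannot be repaired by pure topological dynamics.

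The paper proves $N^+=N^-$ by a different mechanism that uses (H0)--(H2) already at this stage, not only in Step~2. It first establishes $N_1$ (for $t\gg1$) and $N_2$ (for $t\ll-1$) via Lemma~\ref{sequence-limit} and the compactness of $S^1$. To match them, it takes $t_n\to\infty$ with $\Pi^{t_n}(u_2,g)\to(u_2,g)$ and $\Pi^{t_n}(u_1,g)\to(\bar u_1,g)$, obtaining a third constant $N$ for the pair $(\bar u_1,u_2)$. The key point is then to compare $(u_1,g)$ and $(\bar u_1,g)$, which are two points of $M_1$ over the \emph{same} base $g$: by Lemmas~\ref{hyperbolic-minimal} and~\ref{normhyper-minimal}(i)--(ii), under (H0)--(H2) there is a residual $Y_0\subset H(f)$ with $M_1\cap p^{-1}(g^*)\subset(\Sigma u_{g^*},g^*)$ for $g^*\in Y_0$. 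Pushing both $(u_1,g)$ and $(\bar u_1,g)$ forward (resp.\ backward) to a common limit over some $g^*\in Y_0$ identifies $N$ with $N_1$ (resp.\ $N_2$). In other words, the structural information from $Y_0$ is what replaces your failed diagonal step; you should invoke it already in Step~1, not defer it to Step~2.
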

\begin{proof}
We only prove \eqref{constant-0} under the assumption of {\bf (H2)}, while for {\bf (H0)} or {\bf (H1)} the proof is similar. Note that $z(\varphi(t,\cdot;\sigma_{a_1}u_1,g)
-\varphi(t,\cdot;\sigma_{a_2}u_2,g))=z(\varphi(t,\cdot;\sigma_{(a_1-a_2)}u_1,g)-\varphi(t,\cdot;u_2,g))$. Then, in order to prove \eqref{constant-0}, it suffices to find some integer $N\in \mathbb{N}$ such that
\begin{equation}\label{constant-00}
z(\varphi(t,\cdot;\sigma_{a}u_1,g)-\varphi(t,\cdot;u_2,g))=N,
\end{equation}
for any $t\in \mathbb{R},g\in H(f),a\in S^1$ and $(u_i,g)\in M_i$, $i=1,2$.

To this end, we observe that, by Lemma \ref{homogeneous-2d}, $\dim V^c(M_i)\leq 2$ and $\dim V^u(M_i)>0$ ($i=1,2$). Then it follows from Lemma \ref{hyperbolic-minimal}(1) or Lemma \ref{normhyper-minimal}(i)-(ii) that, in any case, there exists (at least) a residual invariant set $Y_0\subset H(f)$ such that, for any $g\in Y_0$, there exist $u^i_g\in X$ ($i=1,2$) with $p^{-1}(g)\cap M_i\subset (\Sigma u^i_g,g)$.

 Now, for each $g\in H(f)$ and $(u_i,g)\in M_i\cap p^{-1}(g)$($i=1,2$), we {\it claim that there is an integer $N\in \mathbb {N}$ such that $z(\varphi(t,\cdot;\sigma_au_1,g)-\varphi(t,\cdot;u_2,g))=N$ for all $t\in \mathbb {R}$ and $a\in S^1$}. In order to prove this claim, for such $g$ and $(u_i,g)$, we first note that there are $T>0$ and $N_1, N_2$ such that
\begin{equation}\label{positive-constant}
 z(\varphi(t,\cdot;\sigma_a u_1,g)-\varphi(t,\cdot;u_2,g))=N_1 \quad \text{for all }t\ge T\text{ and } a\in S^1,
\end{equation}
and
\begin{equation}\label{negative-constant}
 z(\varphi(t,\cdot;\sigma_au_1,g)-\varphi(t,\cdot;u_2,g))=N_2 \quad \text{for all}\ t\le -T\text{ and } a\in S^1.
\end{equation}
In fact, since $u_2\notin \Sigma u_1$, \eqref{positive-constant} follows directly from Corollary \ref{difference-lapnumber}(a), the connectivity and compactness of $S^1$. As for \eqref{negative-constant}, one can take a sequence $t_n\to -\infty$ such that $\Pi^{t_n}(u_i,g)$($i=1,2$) converges to $(\tilde u_i,\tilde g)\in M_i\cap p^{-1}(\tilde g)$ as $n\to \infty$, for $i=1,2$. Recall that $\Sigma M_1\cap M_2=\emptyset$. Then by Lemma \ref{sequence-limit} and the connectivity of $S^1$,  there is an $N_2\in\mathbb{N}$ such that
\begin{equation}\label{limit-const}
  z(\varphi(t,\cdot;\sigma_a\tilde u_1,\tilde g)-\varphi(t,\cdot;\tilde u_2,\tilde g))=N_2,\quad\text{for all}\ a\in S^1,\ t\in \mathbb{R}.
\end{equation}
Therefore, for any $a\in S^1$, one has
\begin{equation}\label{asympto}
 z(\varphi(t_n,\cdot;\sigma_au_1,g)-\varphi(t_n,\cdot;u_2,g))=N_2,\quad
\end{equation}
for all $n$ sufficiently large. Hence, combined by Corollary \ref{difference-lapnumber}(a), the connectivity and compactness of $S^1$ again imply that \eqref{negative-constant} holds.

We now turn to prove that $N_1=N_2$. Choose a sequence $t_n\to \infty$ such that $\Pi^{t_n}(u_2,g)\to (u_2,g)$ as $n\to \infty$. Without loss of generality, we assume that $\Pi^{t_n}(u_1,g)\to (\bar u_1,g)$. By Lemma \ref{sequence-limit} again, there is an integer $N>0$ satisfying that
\begin{equation}\label{constant-1}
  z(\varphi(t,\cdot;\sigma_a\bar u_1,g)-\varphi(t,\cdot;u_2,g))=N,\quad\text{for all}\ a\in S^1,\ t\in \mathbb{R}.
\end{equation}
 Clearly, $(u_1,g)$, $(\bar u_1,g)\in M_1\cap p^{-1}(g)$. Choose some sequence $t^*_n\to\infty$ such that $\Pi^{t^*_n}(u_1,g)\to (u^*_1,g^*)\in M_1$ with $g^*\in Y_0$. By the property of $Y_0$ and the translation invariance, one may obtain that $\Pi^{t^*_n}(\sigma_{a_+}\bar u_1,g)\to (u^*_1,g^*)$ for some $a_+\in S^1$. Together with \eqref{positive-constant}, \eqref{constant-1} and the continuity of $z(\cdot)$, this then implies that $N=N_1$. Likewise, one can find $N=N_2$ by using \eqref{negative-constant}, \eqref{constant-1} and replacing $t^*_n$ by some similar sequence $s^*_n\to -\infty$. Therefore, one has $N_1=N=N_2$. Thus, we have proved the claim.

Finally, we show that $N$ is independent of $g\in H(f)$ and $(u_i,g)\in M_i\cap p^{-1}(g)$ ($i=1,2$). Indeed, for any $g\in H(f)$ and any $(u_i,g)$, $(\hat u_i,g)\in M_i\cap p^{-1}(g)$($i=1,2$), By the claim above, there are $N_1,N_2\in\mathbb{N}$ such that
\begin{equation*}
  z(\varphi(t,\cdot;\sigma_au_1,g)-\varphi(t,\cdot;u_2,g))=N_1,\quad \text{for all}\ a\in S^1, \ t\in \mathbb{R},
\end{equation*}
and
\begin{equation*}
  z(\varphi(t,\cdot;\sigma_a\hat u_1,g)-\varphi(t,\cdot;\hat u_2,g))=N_2,\quad \text{for all}\ a\in S^1,\ t\in \mathbb{R}.
\end{equation*}
Choose some $(u^*_i,g^*)\in M_i$ ($i=1,2$) with $g^*\in Y_0$. Then there are $t_n\to\infty$, $a_i,\hat a_i\in S^1$ such that $\Pi^{t_n}(\sigma_{a_i}u_i,g)\to (u^*_i,g^*)$ and $\Pi^{t_n}(\sigma_{\hat a_i}u_i,g)\to (u^*_i,g^*)$ as $n\to\infty$. The continuity of $z(\cdot)$ then implies that
\begin{eqnarray*}
\begin{split}
  N_1=z(\varphi(t,\cdot; \sigma_{a_1}u_1,g)-\varphi(t,\cdot;\sigma_{a_2}u_2,g))&=z(u^*_1-u^*_2)\\ &=z(\varphi(t,\cdot;\sigma_{\hat a_1}\hat u_1,g)-\varphi(t,\cdot;\sigma_{\hat a_2}\hat u_2,g))=N_2.
\end{split}
\end{eqnarray*}
Moreover, for any $g,\hat g\in H(f)$ and $(u_i,g)\in M_i\cap p^{-1}(g)$, $(\hat u_i,\hat g)\in M_i\cap p^{-1}(\hat g)$($i=1,2$). Again, one can choose a sequence  $t_n\to -\infty$ and $(\bar u_ 2, \hat g)\in M_2\cap p^{-1}(\hat g)$ such that $\Pi^{t_n}(u_1,g)\to (\hat u_1,\hat g)$ and $\Pi^{t_n}(u_2,g)\to (\bar u_2,\hat g)$ as $n\to \infty$. Similarly as the arguments in \eqref{limit-const}-\eqref{asympto}, we have
\begin{eqnarray*}
\begin{split}
  N=z(\varphi(t,\cdot; u_1,g)-\varphi(t,\cdot;u_2,g))=z(\varphi(t,\cdot;\hat u_1,\hat g)-\varphi(t,\cdot;\hat u_2,\hat g)),
\end{split}
\end{eqnarray*}
for all $t\in \mathbb{R}$. Thus, we have proved that $N$ is independent of $g\in H(f)$, $a\in S^1$ and $(u_i,g)\in M_i\cap p^{-1}(g)$($i=1,2$), which completes the proof of the lemma.
\end{proof}

\section{Skew-product semiflow on  the quotient space}\label{S:SP-S-quotient}

In this section, we  introduce the skew-product semiflow on the quotient space induced by the spatial-translation and present some basic properties.

For any $u\in X$, we define an equivalence relation on $X$ by declaiming  $u \sim v$ if and only if $u=\sigma_a v$ for some $a\in S^1$, and denoted by $[\cdot]$ for the same equivalence class. Then $\tilde{X}=X/\sim$ (the quotient space of $X$) is a metric space with $\tilde{d}_{\widetilde{X}}$ defined as $\tilde{d}_{\tilde{X}}([u],[v]):=d_H(\Sigma u,\Sigma v)$ for any $[u],[v]\in \tilde{X}$. Here $d_H(U,V)$ is the Hausdorff metric of the compact subsets $U,V$ in $X$, defined as $d_{H}(U,V)=\sup\{\sup_{u \in U} \inf_{v \in V} d_X(u,v),$ $\, \sup_{v \in V} \inf_{u \in U} d_X(u,v)\}$ with the metric $d_X(u,v)=||u-v||_{X}$. It is clear that $d_X$ satisfies the $S^1$-translation invariance, that is, $d_X(\sigma_a u,\sigma_a v)=d_X(u,v)$ for any $u,v\in X$, $a \in S^1$. Let $d_Y$ be the metric on $H(f)$, then one can induce a product metric $d$ on $X\times H(f)$ by setting $d((u_1,g_1),(u_2,g_2))=d_X(u_1,u_2)+d_Y(g_1,g_2)$ for any two points $(u_1,g_1),(u_2,g_2)\in X\times H(f)$. Hence, an induced metric $\tilde d$ on $\tilde{X}\times H(f)$ can be defined as $\tilde{d}(([u],g_1),([v],g_2))=\tilde{d}_{\tilde X}([u],[v])+d_Y(g_1,g_2)$. For any subset $K\subset X\times H(f)$, we write $\tilde{K}=\{([u],g)\in \tilde{X}\times H(f)|(u,g)\in K\}$.

Consider the induced mapping $\tilde{\Pi}^t$ ($t\geq 0$) on $\tilde{X}\times H(f)$ as
\begin{align}\label{induced-skepro-semiflow}
\begin{split}
\tilde{\Pi}^t:\tilde{X}\times H(f)&\rightarrow \tilde{X}\times H(f);\\
([u],g)&\rightarrow (\tilde{\varphi}(t,\cdot;[u],g),g\cdot t):=([\varphi(t,\cdot;u,g)],g\cdot t).
\end{split}
\end{align}
It follows from\cite[Lemma 3.10]{SWZ} that $\tilde{\Pi}^t$ is a skew-product semiflow on $\tilde{X}\times H(f)$. It is also not difficult to see that if $E\subset X\times H(f)$ is a connected and compact invariant set of $\Pi^t$, then $\tilde E=\{([u],g)|(u,g)\in E\}$ is also a connected and compact invariant set of $\tilde \Pi^t$.  Moreover, other notations and definitions for $\tilde \Pi^t$ are analogous to those of $\Pi^t$, such as the (almost) $1$-cover property with respect to $\tilde \Pi^t$, the natural flow homomorphism $\tilde{p}:\tilde{X}\times H(f)\to H(f)$, etc.

%\begin{lemma}\label{induced-property1}
%  Suppose $E\subset X\times H(f)$ be a connect and compact invariant set of $\Pi^t$. Then $\tilde E=\{([u],g)|(u,g)\in E\}$ is also a connect and compact invariant set of $\tilde \Pi^t$.
%\end{lemma}
%\begin{proof}
%  Noticing that $\tilde I: (u,g)\to ([u],g)$ is a continuous function from $X\times H(f)$ to $\tilde X\times H(f)$. The proof is straightforward.
%\end{proof}

Henceforth, we always write
\begin{equation}\label{E:Induced-Omega}
\tilde \Omega=\{([u],g)|(u,g)\in \Omega\},
\end{equation} whenever $\Omega=\omega(u_0,g_0)$ is an $\omega$-limit set of \eqref{equation-lim2}. Then the following lemma reveals that $\tilde{\O}$ is in fact the $\omega$-limit set of $([u_0],g)$ with respect to $\tilde{\Pi}^{t}$.

\begin{lemma}\label{induced-property2}
  Assume that $\Pi^{t}(u_0,g_0)$ is bounded for $t\geq 0$ and $\Omega=\omega(u_0,g_0)$ is the $\omega$-limit set of \eqref{equation-lim2}. Then $\tilde \Omega=\omega([u_0],g_0)$, where
$$\omega([u_0],g_0)=\{([u],g)\, |\, \text{ there exists }t_n\to \infty \text{ such that }\tilde\Pi^{t_n}([u_0],g_0)\to ([u],g)\}.
$$
\end{lemma}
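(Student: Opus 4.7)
The statement says that $\omega$-limit and the spatial-shift quotient commute. The natural approach is to prove the two inclusions $\tilde{\Omega}\subseteq\omega([u_0],g_0)$ and $\omega([u_0],g_0)\subseteq\tilde{\Omega}$ separately, using the observation that the projection $q:X\times H(f)\to\tilde X\times H(f),\ (u,g)\mapsto([u],g)$ is (uniformly) continuous: indeed, by the $S^1$-translation invariance $d_X(\sigma_a u,\sigma_a v)=d_X(u,v)$, one has $\tilde d_{\tilde X}([u],[v])=d_H(\Sigma u,\Sigma v)\le d_X(u,v)$, so $\tilde d(q(u_1,g_1),q(u_2,g_2))\le d((u_1,g_1),(u_2,g_2))$, and $q\circ\Pi^t=\tilde\Pi^t\circ q$ by the very definition \eqref{induced-skepro-semiflow}.

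The first inclusion is then immediate: if $([u],g)\in\tilde\Omega$, take $(u,g)\in\Omega$ and pick $t_n\to\infty$ with $\Pi^{t_n}(u_0,g_0)\to(u,g)$; then $\tilde\Pi^{t_n}([u_0],g_0)=q(\Pi^{t_n}(u_0,g_0))\to q(u,g)=([u],g)$, so $([u],g)\in\omega([u_0],g_0)$.

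For the reverse inclusion, let $([u],g)\in\omega([u_0],g_0)$ and choose $t_n\to\infty$ with $\tilde\Pi^{t_n}([u_0],g_0)\to([u],g)$, i.e.\ $d_H(\Sigma\varphi(t_n,\cdot;u_0,g_0),\Sigma u)\to 0$ and $g_0\cdot t_n\to g$. Since $\Pi^{t}(u_0,g_0)$ is bounded and the standard parabolic estimates give relative compactness of $\{\varphi(t,\cdot;u_0,g_0):t\ge 1\}$ in $X$, we may pass to a subsequence (still denoted $t_n$) so that $\Pi^{t_n}(u_0,g_0)\to(v,g')$ for some $(v,g')\in\Omega$; the second coordinate forces $g'=g$. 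Applying $q$ and using its continuity, $([v],g)=q(v,g)\in\tilde\Omega$ and $\tilde\Pi^{t_n}([u_0],g_0)\to([v],g)$. By uniqueness of limits in $\tilde X\times H(f)$ we conclude $[u]=[v]$, hence $([u],g)=([v],g)\in\tilde\Omega$.

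\textbf{Main obstacle.} There is no serious analytic difficulty here; the only point that needs a moment of care is verifying that the quotient map $q$ is continuous (equivalently, that the induced metric $\tilde d_{\tilde X}$ is dominated by $d_X$), which relies precisely on the $S^1$-translation invariance of $d_X$ noted at the start of Section \ref{S:SP-S-quotient}. Once this is in place, compactness of $\Omega$ (available because $\Pi^t(u_0,g_0)$ is bounded and hence precompact in $X\times H(f)$) does all the real work in the second inclusion.
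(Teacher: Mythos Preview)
Your proof is correct and follows essentially the same route as the paper: both inclusions are proved exactly as you outline, using the Lipschitz bound $\tilde d_{\tilde X}([u],[v])\le d_X(u,v)$ (which the paper writes out as an explicit chain of inequalities rather than phrasing it as continuity of a quotient map $q$) for $\tilde\Omega\subset\omega([u_0],g_0)$, and precompactness of the forward orbit plus uniqueness of limits for the reverse inclusion. Your presentation via the conjugacy $q\circ\Pi^t=\tilde\Pi^t\circ q$ is slightly cleaner, but the argument is the same.
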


\begin{proof}
  For any point $\tilde \omega\in \tilde \Omega$, there is $(u,g)\in \Omega$ such that $\tilde \omega=([u],g)$. Since $(u,g)\in \Omega$, there exists $t_n\to \infty$ such that $\Pi^{t_n}(u_0,g_0)\to (u,g)$ as $n\to \infty$. Then
\begin{align}
  \tilde d(([\varphi(t_n,\cdot;u_0,g_0)],g_0),([u],g))&=\tilde d_{\tilde X}([\varphi(t_n,\cdot;u_0,g_0)],[u])+d_{Y}(g_0\cdot t_n,g)\nonumber\\
  &=d_{H}(\Sigma\varphi(t_n,\cdot;u_0,g_0),\Sigma u)+d_{Y}(g_0\cdot t_n,g)\nonumber\\
  &\leq d_{X}(\varphi(t_n,\cdot;u_0,g_0),u)+d_{Y}(g_0\cdot t_n,g)\to 0,
\end{align}
which means that $\tilde \Pi^{t_n}([u_0],g_0)\to ([u],g)$ as $n\to \infty$. So, $\tilde \Omega\subset\omega([u_0],g_0)$.

On the other hand, given $\tilde \omega\in \omega([u_0],g_0)$, there exists $(u,g)\in X\times H(f)$ satisfies $\tilde \omega=([u],g)$. Since $\tilde \omega\in \omega([u_0],g_0)$, we assume $\tilde \Pi^{t_n}([u_0],g_0)\to ([u],g)$ for some $t_n\to\infty$. Then, there are $(u_g,g)\in \omega(u_0,g_0)$ and $\{t_{n_k}\}\subset \{t_n\}$ such that $\Pi^{t_{n_k}}(u_0,g_0)\to (u_g,g)$. By the arguments in the above paragraph, one can further to get $\tilde \Pi^{t_{n_k}}([u_0],g_0)\to ([u_g],g)$. Therefore, $([u],g)=([u_g],g)\in \tilde \Omega$, which entails that $\omega([u_0],g_0)\subset \tilde \Omega$. The proof of this lemma is completed.
\end{proof}

An immediate corollary of Lemma \ref{induced-property2} is the following
\begin{corollary}\label{induced-property3}
Let $M\subset X\times H(f)$ be a  minimal set of $\Pi^t$, then $\tilde M=\{([u],g)|(u,g)\in M\}$ is a minimal set of $\tilde\Pi^t$. Conversely, if $\tilde M$ ($\tilde M\subset \tilde \Omega$) is a minimal set of $\tilde\Pi^t$, then there is a minimal set $M\subset X\times H(f)$ ($M\subset \Omega$) such that $\tilde M=\{([u],g)|(u,g)\in M\}$.
\end{corollary}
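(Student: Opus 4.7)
\textbf{Proof plan for Corollary \ref{induced-property3}.}
The plan is to read the corollary as two separate claims and handle each by combining Lemma \ref{induced-property2} with the continuity and surjectivity of the natural quotient map $q:X\times H(f)\to \tilde X\times H(f)$, $(u,g)\mapsto ([u],g)$. The crucial observation, used throughout, is that $\tilde\Pi^t\circ q = q\circ \Pi^t$ by the very definition \eqref{induced-skepro-semiflow} of $\tilde\Pi^t$, so $q$ is a flow homomorphism. The main obstacle, such as it is, is correctly invoking the standard fact from topological dynamics that every nonempty compact invariant set of a semiflow contains a minimal set; everything else is routine topology.

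For the forward direction, I would first observe that $\tilde M=q(M)$ is nonempty and compact, as the continuous image of the nonempty compact set $M$, and it is $\tilde\Pi^t$-invariant because for any $([u],g)\in \tilde M$ with $(u,g)\in M$ one has $\tilde\Pi^t([u],g)=q(\Pi^t(u,g))\in q(M)=\tilde M$. For minimality, pick any $([u],g)\in \tilde M$, so $(u,g)\in M$; by minimality of $M$, $\overline{\{\Pi^t(u,g):t\ge 0\}}=M$. Applying the continuous map $q$ and using that $q(\overline{A})\subset \overline{q(A)}$, together with $q(M)=\tilde M$, yields $\tilde M\subset \overline{\{\tilde\Pi^t([u],g):t\ge 0\}}$. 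The reverse inclusion follows from invariance and closedness of $\tilde M$. Hence the forward orbit closure of every point of $\tilde M$ equals $\tilde M$, so $\tilde M$ is minimal for $\tilde\Pi^t$.

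For the reverse direction, assume $\tilde M\subset \tilde\Omega$ is a minimal set of $\tilde\Pi^t$. Set
\[
N:=q^{-1}(\tilde M)\cap \Omega.
\]
Since $\tilde M$ is closed and $q$ continuous, $q^{-1}(\tilde M)$ is closed; intersecting with the compact set $\Omega$, $N$ is compact. It is nonempty because $\tilde M\subset \tilde \Omega=q(\Omega)$ by Lemma \ref{induced-property2}. It is $\Pi^t$-invariant: if $(u,g)\in N$ then $([u],g)\in \tilde M$, so $q(\Pi^t(u,g))=\tilde\Pi^t([u],g)\in \tilde M$, which together with $\Pi^t(u,g)\in \Omega$ gives $\Pi^t(u,g)\in N$. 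By the standard Zorn's lemma argument (applied to nonempty compact $\Pi^t$-invariant subsets of $N$ ordered by reverse inclusion), $N$ contains a minimal set $M\subset \Omega$ of $\Pi^t$.

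Finally, by the forward direction just proved, $q(M)$ is a minimal set of $\tilde\Pi^t$. Because $M\subset N\subset q^{-1}(\tilde M)$, we have $q(M)\subset \tilde M$, and since $\tilde M$ is itself minimal, the only nonempty compact $\tilde\Pi^t$-invariant subset of $\tilde M$ is $\tilde M$ itself; hence $q(M)=\tilde M$, which is exactly the assertion that $\tilde M=\{([u],g)\,|\,(u,g)\in M\}$. This completes the proof.
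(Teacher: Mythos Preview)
Your proof is correct; the paper itself gives no argument for this corollary, simply stating it as ``an immediate corollary of Lemma \ref{induced-property2}.'' Your detailed argument via the flow homomorphism $q$, compactness, and Zorn's lemma is exactly the routine verification the authors are implicitly invoking.
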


\begin{lemma}\label{induced1-1cover}
Let $\Omega$ be an $\omega$-limit set of \eqref{equation-lim2} satisfying  one of
the hypotheses {\bf (H0)}-{\bf (H2)}.  Then we have
\begin{itemize}
\item[{\rm(i)}]Any minimal set $\tilde M\subset \tilde \O$ is an almost $1$-cover of $H(f)$. Moreover, if {\bf (H0)} holds, or {\bf (H1)} holds with $\dim V^u(\Omega)>0$, then $\tilde M$ is a $1$-cover of $H(f)$.
\item[{\rm(ii)}]Let $\tilde M_1$, $\tilde M_2\subset \tilde \O$ be two minimal sets of $\tilde\Pi^t$ and $M_1,M_2\subset \Omega$ be two minimal sets of $\Pi^t$ such that $\tilde M_i=\{([u_i],g)|(u_i,g)\in M_i\}$ ($i=1,2$). Define
\begin{eqnarray}\label{E:min-max}
 \begin{split}
   m_i(g):=\min\{m(u_i)|(u_i,g)\in M_i\cap p^{-1}(g)\},\\
   M_i(g):=\max\{m(u_i)|(u_i,g)\in M_i\cap p^{-1}(g)\}
 \end{split}
 \end{eqnarray}
 for $i=1,2$. Then $\tilde M_1$, $\tilde M_2$ are separated in the following sense:
 \begin{itemize}
   \item[{ \rm (ii-a)}] $[m_1(g),M_1(g)]\cap[m_2(g),M_2(g)]=\emptyset$ for all $g\in H(f)$;
   \item[{ \rm (ii-b)}] If $m_2(\tilde g)>M_1(\tilde g)$ for some $\tilde g\in H(f)$, then there exists $\delta>0$ such that  $m_2(g)>M_1(g)+\delta$ for all $g\in H(f)$.
\end{itemize}
\end{itemize}
\end{lemma}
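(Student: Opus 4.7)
My plan is to split the proof by the dimension of $V^c(M)$ for the minimal set $M\subset\Omega$ that corresponds to $\tilde M$ via Corollary~\ref{induced-property3}. The bound $\dim V^c(M)\leq\dim V^c(\Omega)$ from Remark~\ref{invari-space} limits the cases. If $\dim V^c(M)=0$, Lemma~\ref{hyperbolic-minimal}(1) makes $M$ a spatially-homogeneous $1$-cover of $H(f)$, so $\tilde M=M$ is already a $1$-cover. If $\dim V^c(M)\geq 1$, I plan to apply Lemma~\ref{normhyper-minimal}(i) to obtain a residual invariant $Y_0\subset H(f)$ such that $p^{-1}(g)\cap M\subset(\Sigma u_g,g)$ for every $g\in Y_0$; this immediately yields $\tilde M\cap\tilde p^{-1}(g)=\{([u_g],g)\}$ and hence the almost $1$-cover property. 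To apply Lemma~\ref{normhyper-minimal} in the case $\dim V^c(M)=2$ (which can only arise under (H2)), I will use the Sacker-Sell codimension identity $\dim V^c(M)+\dim V^u(M)=\mathrm{codim}V^s(M)\leq\mathrm{codim}V^s(\Omega)=\dim V^c(\Omega)+\dim V^u(\Omega)$, combined with $\dim V^u(M)\geq\dim V^u(\Omega)$ from Remark~\ref{invari-space}, to force $\dim V^u(M)=\dim V^u(\Omega)$, which is odd by (H2). The sharpening to a genuine $1$-cover under (H0) or (H1) with $\dim V^u(\Omega)>0$ comes from Lemma~\ref{homogeneous}(a) to ensure $\dim V^u(M)>0$ when $\dim V^c(M)=1$, followed by Lemma~\ref{normhyper-minimal}(ii) to promote $Y_0=H(f)$.

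For (ii-a), I first note that distinct minimal sets $\tilde M_1\neq\tilde M_2$ of $\tilde\Pi^t$ are disjoint, which lifts to $\Sigma M_1\cap M_2=\emptyset$: any shift $\sigma_a M_1$ is itself a minimal set of $\Pi^t$, and if it met $M_2$ it would coincide with $M_2$, forcing $\tilde M_1=\tilde M_2$. Lemma~\ref{constant-ontwoset} then provides a constant $N\in\mathbb{N}$ with
\begin{equation*}
z\bigl(\varphi(t,\cdot;\sigma_{a_1}u_1,g)-\varphi(t,\cdot;\sigma_{a_2}u_2,g)\bigr)=N
\end{equation*}
for all $t\in\mathbb{R}$, $g\in H(f)$, $(u_i,g)\in M_i$, $a_i\in S^1$. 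Assuming for contradiction an overlap at some $g_0$, I will extract $(u_1,g_0)\in M_1$ and $(u_2,g_0)\in M_2$ with $m(u_1)=m(u_2)$; then choosing $c\in S^1$ so that $\sigma_c u_1$ attains its maximum at the same spatial point $x_0$ as $u_2$ produces a multiple zero of $\sigma_c u_1-u_2$ at $x_0$ (common value and simultaneously vanishing derivatives at a joint max), which by Corollary~\ref{difference-lapnumber}(b) strictly drops the zero number and contradicts constancy.

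For (ii-b), I will introduce $A=\{g\in H(f):M_1(g)<m_2(g)\}$ and $B=\{g\in H(f):M_2(g)<m_1(g)\}$. Upper semicontinuity of the fiber map $g\mapsto M_i\cap p^{-1}(g)$ (standard from compactness of $M_i$ and continuity of $\Pi^t$) renders $g\mapsto M_i(g)$ upper semicontinuous and $g\mapsto m_i(g)$ lower semicontinuous, so $A$ and $B$ are open and disjoint. Part (ii-a) ensures $A\cup B=H(f)$; because $(H(f),\sigma)$ is minimal and $H(f)$ is therefore connected (as the closure of an orbit), exactly one of $A,B$ is empty. Since $\tilde g\in A$ by hypothesis, $A=H(f)$, and then the upper semicontinuous strictly negative function $M_1-m_2$ attains its maximum on compact $H(f)$, yielding a uniform $\delta>0$ with $m_2(g)>M_1(g)+\delta$ for all $g\in H(f)$.

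The main obstacle will be the equal-max extraction in (ii-a): when $M_i\cap p^{-1}(g_0)$ consists of several distinct $S^1$-orbits, the value set $m(M_i\cap p^{-1}(g_0))$ is discrete rather than an interval, so even if $[m_1(g_0),M_1(g_0)]\cap[m_2(g_0),M_2(g_0)]\neq\emptyset$ the two value sets need not meet directly. I expect this to be surmounted by approximating $g_0$ by points in the residual set from (i), where each fiber is a single $S^1$-orbit so $m_i=M_i$ is a single value, and then using upper semicontinuity of fibers together with the constancy of $N$ to transfer an equal-max pair to $g_0$ via a limit.
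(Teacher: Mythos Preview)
Your treatment of (i) is correct and follows the paper's route. Your argument for (ii-b) is also correct but differs from the paper: you use connectedness of $H(f)$ together with semicontinuity of $g\mapsto M_i(g),m_i(g)$ to split $H(f)$ into two open pieces, whereas the paper argues dynamically by flowing a hypothetical bad pair forward to $\tilde g$. Your approach is cleaner and avoids a second flow argument.

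The genuine gap is in (ii-a). You correctly identify the obstacle: an overlap of the intervals $[m_1(g_0),M_1(g_0)]$ and $[m_2(g_0),M_2(g_0)]$ does \emph{not} produce $(u_i,g_0)\in M_i$ with $m(u_1)=m(u_2)$, because the actual value sets $\{m(u_i):(u_i,g_0)\in M_i\}$ are finite (or at least not intervals). Your proposed fix via the residual set $Y_0$ does not close this: on $Y_0$ each value set collapses to a singleton, but nothing forces these singletons to coincide anywhere, and upper semicontinuity of the fiber map only says that limits of these singletons along $g_n\to g_0$ land somewhere in the respective intervals---not at a common point. In fact your multiple-zero observation, applied directly, already shows $m(u_1)\neq m(u_2)$ for \emph{every} pair $(u_i,g)\in M_i$, so the value sets are always disjoint; but disjoint value sets can still have overlapping convex hulls (e.g.\ $\{0,2\}$ and $\{1\}$).

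What the paper does instead is use your multiple-zero observation in the contrapositive form: since $z(\sigma_a u_1-u_2)$ is constant, all zeros are simple, hence $m(\varphi(t,\cdot;u_1,g))\neq m(\varphi(t,\cdot;u_2,g))$ for every $t$; thus an initial inequality $m(u_1)<m(u_2)$ is strict and persists for all $t\in\mathbb{R}$. One then chooses $(u_1,g)$ with $m(u_1)=m_1(g)$ and $(u_2,g)$ with $m(u_2)=M_2(g)$, flows along a sequence $t_n\to\infty$ so that $\varphi(t_n,\cdot;u_1,g)\to u_1^*$ realizes $M_1(g^*)$, and concludes $M_1(g^*)<M_2(g^*)$; the symmetric argument from $m_2(g)\leq M_1(g)$ gives $M_2(g^*)<M_1(g^*)$, a contradiction. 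The missing idea in your plan is this \emph{propagation of a strict order along the flow to a fiber where an extremal value is realized}; once you insert that, your multiple-zero lemma does the rest.
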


\begin{proof}
(i) Let $\tilde M\subset\tilde \O$ be a minimal set of $\tilde \Pi^{t}$. Then by Corollary \ref{induced-property3}, there is a minimal set $M\subset \O$ such that $\tilde M=\{([u],g)|(u,g)\in M\}$. If {\bf (H0)} is satisfied, then by Remark \ref{invari-space}, $M$ is hyperbolic. Hence, Lemma \ref{hyperbolic-minimal}(1) implies that any hyperbolic $M$ is a spatially-homogeneous $1$-cover of $H(f)$. If {\bf (H1)} holds and $\dim V^u(M)>0$, then it follows from Lemma \ref{homogeneous} and Lemma \ref{normhyper-minimal}(ii) that $\tilde M$ is $1$-cover of $H(f)$. If {\bf (H1)} holds and $\dim V^u(M)=0$, by Lemma \ref{hyperbolic-minimal}, $M$ is at least a spatially-homogeneous almost $1$-cover of $H(f)$. Hence, $\tilde{M}$ is an almost $1$-cover of $H(f)$. Finally, if {\bf (H2)} holds, then by Remark \ref{invari-space}, $\dim V^c(M)\leq 2$. When $\dim V^c(M)=2$, Lemma \ref{normhyper-minimal}(i) directly entails that $\tilde M$ is an almost $1$-cover of $H(f)$. For other cases, one can combine  Lemma \ref{homogeneous-2d} and the similar arguments as above to obtain that $\tilde M$ is a $1$-cover of $H(f)$.

  (ii-a) Suppose on the contrary that there exists some $g\in H(f)$ such that $m_1(g)\le M_2(g)$ and $m_2(g)\le M_1(g)$. On the one hand, we choose $(u_1,g)\in M_1$ such that $m(u_1)=m_1(g)$, and choose $(u_2,g)\in M_2$ such that $m(u_2)=M_2(g)$. So, $m(u_1)=m_1(g)\le M_2(g)=m(u_2)$. Recall that $\Sigma M_1\cap M_2=\emptyset$ (since $\tilde{M}_1\ne \tilde{M_2}$). Then Lemma \ref{constant-ontwoset} implies that
  \begin{equation}
z(\varphi(t,\cdot;\sigma_{a_1}u_1,g)-\varphi(t,\cdot;\sigma_{a_2}u_2,g))\equiv {\rm constant},
\end{equation}
for all $a_1,a_2\in S^1$ and $t\in \mathbb{R}$.
By virtue of Corollary  \ref{difference-lapnumber}, $\varphi(t,\cdot;\sigma_{a_1}u_1,g)-\varphi(t,\cdot;\sigma_{a_2}u_2,g)$ has only simple zeros on $S^1$, which entails that $m(\varphi(t,\cdot;u_1,g))\neq m(\varphi(t,\cdot;u_2,g))$ for any $t\in \mathbb{R}$.
Together with $m(u_1)\le m(u_2)$, one obtains that $m(u_1)<m(u_2)$; and hence,
\begin{equation}\label{smaller-1}
  m(\varphi(t,\cdot;u_1,g))<m(\varphi(t,\cdot;u_2,g)),\quad \text{for all}\ t\in\mathbb{R}.
\end{equation}
By the minimality of $M_1$, one can find a sequence $t_n\to \infty$ such that $\Pi^{t_n}(u_1,g)\to (u^*_1,g^*)$ as $n\to \infty$, where $(u^*_1,g^*)\in M_1$ with $m(u^*_1)=M_1(g^*)$. For simplicity, we may also assume that $\Pi^{t_n}(u_2,g)\to (u^*_2,g^*)$ as $n\to \infty$. By \eqref{smaller-1}, $M_1(g^*)=m(u^*_1)\leq m(u^*_2)\le M_2(g^*)$. Moreover, it follows from Lemma \ref{constant-ontwoset} again that $m(u^*_1)\ne m(u^*_2)$, which means that $M_1(g^*)<M_2(g^*)$. On the other hand, together with  $m_2(g)\le M_1(g)$, one can repeat the similar argument above to obtain that $M_2(g^*)<M_1(g^*)$ Thus, we have obtained a contradiction; and hence, we have proved that for any $g\in H(f)$, either $m_1(g)>M_2(g)$ or $m_2(g)> M_1(g)$, which implies (ii-a) directly.

(ii-b)  We first show that if $m_2(\tilde g)>M_1(\tilde g)$ for some $\tilde g\in H(f)$, then $m_2(g)>M_1(g)$ for all $g\in H(f)$. Suppose that there is a $g^*\in H(f)$ such that $ m_2(g^*)\le M_1(g^*)$. Choose $(u^*_2,g^*)\in M_2$ with $m(u^*_2)=m_2(g^*)$, and choose $(u^*_1,g^*)\in M_1$ with $m(u^*_1)=M_1(g^*)$. Hence, we have $m(u^*_2)\le m(u^*_1)$. By the minimality of $M_2$, one can find a sequence $t_n\to \infty$ such that $\Pi^{t_n}(u^*_2,g^*)\to (u^{**}_2,\tilde g)$ as $n\to \infty$ with $m(u^{**}_2)=m_2(\tilde g)$. Without loss of generality, one may also assume that $\Pi^{t_n}(u^*_1,g^*)\to (u^{**}_1,\tilde g)$ as $n\to \infty$. By repeating the same arguments in the previous paragraph, one has
\begin{equation*}
  m_2(\tilde g)=m(u^{**}_2)\le m(u^{**}_1)\le M_1(\tilde g),
\end{equation*}
contradicting our assumption. Therefore, $m_2(g)>M_1(g)$ for all $g\in H(f)$.

Finally, we show the existence of $\delta>0$. Suppose that there is a sequence $\{g_n\}\subset H(f)$ such that $m_2(g_n)>M_1(g_n)$ and $\abs{m_2(g_n)-M_1(g_n)}\to 0$ as $n\to \infty$. Without loss of generality, let $g_n\to g^*\in H(f)$, $m_2(g_n)\to c$ and $M_1(g_n)\to c$ as $n\to\infty$, for some $c\in\mathbb{R}$. Since $M_i$ ($i=1,2$) are compact, $c\in[m_1(g^*),M_1(g^*)]\cap[m_2(g^*),M_2(g^*)]$, a contradiction to (ii-a).
\end{proof}

\begin{lemma}\label{proximal-equivalent}
For any two points $([u_1],g),([u_2],g)\in \tilde \O$ ($(u_i,g)\in \O$, $i=1,2$), if there exists $t_n\to\infty$ {\rm (resp. $s_n\to-\infty$)} such that $\tilde\Pi^{t_n}([u_1],g)-\tilde\Pi^{t_n}([u_2],g)\to 0$ {\rm(resp. $\tilde\Pi^{s_n}([u_1],g)-\tilde\Pi^{s_n}([u_2],g)\to 0$)} as $n\to \infty$. Then there exist a subsequence $\{t_{n_k}\}\subset \{t_{n}\}$ {\rm(resp. $\{s_{n_k}\}\subset \{s_{n}\}$)}, $a^*\in S^1$ and $(u^*,g^*)\in \Omega$ such that
\begin{equation*}
\begin{split}
&\Pi^{t_{n_k}}(u_1,g)\to (u^*,g^*)\quad {\rm and}\quad  \Pi^{t_{n_k}}(\sigma_{a^*}u_2,g)\to (u^*,g^*)\\
&(\mathrm{resp}.\ \Pi^{s_{n_k}}(u_1,g)\to (u^*,g^*)\quad {\rm and}\quad  \Pi^{s_{n_k}}(\sigma_{a^*}u_2,g)\to (u^*,g^*)).
\end{split}
\end{equation*}
\end{lemma}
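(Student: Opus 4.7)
The plan is to exploit compactness in three places simultaneously: compactness of $\Omega$ (to extract limits of the two orbits along a common subsequence), compactness of the circle $S^1$ (to extract a limit of the shifts that realize the Hausdorff distance), and the continuity of the $S^1$-shift action on $X$ (to match up the two limits). I will only handle the $t_n\to+\infty$ case since the $s_n\to-\infty$ case is identical.

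First, since $(u_1,g),(u_2,g)\in\Omega$ and $\Omega$ is compact and invariant under $\Pi^t$, the orbits $\Pi^{t_n}(u_1,g)$ and $\Pi^{t_n}(u_2,g)$ stay in $\Omega$. Passing to a common subsequence (still denoted $\{t_n\}$ for convenience), I can assume
\[
\Pi^{t_n}(u_1,g)\to (u^*,g^*)\in\Omega, \qquad \Pi^{t_n}(u_2,g)\to (v^*,g^*)\in\Omega,
\]
with the same base point $g^*=\lim g\cdot t_n$. Next, because the group orbits $\Sigma\varphi(t_n,\cdot;u_i,g)$ are compact subsets of $X$ (continuous images of $S^1$), the Hausdorff distance $d_H(\Sigma\varphi(t_n,\cdot;u_1,g),\Sigma\varphi(t_n,\cdot;u_2,g))$ is attained; choose $a_n\in S^1$ with
\[
\bigl\|\varphi(t_n,\cdot;u_1,g)-\sigma_{a_n}\varphi(t_n,\cdot;u_2,g)\bigr\|
\le d_H\bigl(\Sigma\varphi(t_n,\cdot;u_1,g),\Sigma\varphi(t_n,\cdot;u_2,g)\bigr).
\]
By compactness of $S^1$, after extracting once more I may assume $a_n\to a^*\in S^1$.

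Now combine the ingredients. The hypothesis $\tilde\Pi^{t_n}([u_1],g)-\tilde\Pi^{t_n}([u_2],g)\to 0$ means exactly that the Hausdorff-distance bound above tends to $0$, so $\varphi(t_n,\cdot;u_1,g)-\sigma_{a_n}\varphi(t_n,\cdot;u_2,g)\to 0$ in $X$. The first term converges to $u^*$. For the second, the continuity of the map $(a,u)\mapsto\sigma_a u$ on $S^1\times X$ together with $a_n\to a^*$ and $\varphi(t_n,\cdot;u_2,g)\to v^*$ yields $\sigma_{a_n}\varphi(t_n,\cdot;u_2,g)\to\sigma_{a^*}v^*$. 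Therefore $u^*=\sigma_{a^*}v^*$.

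Finally, invoking translation invariance $\sigma_a\varphi(t,\cdot;u_2,g)=\varphi(t,\cdot;\sigma_a u_2,g)$ together with $\varphi(t_n,\cdot;u_2,g)\to v^*$ gives
\[
\varphi(t_n,\cdot;\sigma_{a^*}u_2,g)=\sigma_{a^*}\varphi(t_n,\cdot;u_2,g)\to\sigma_{a^*}v^*=u^*,
\]
so $\Pi^{t_n}(\sigma_{a^*}u_2,g)\to(u^*,g^*)$, as required. The only delicate point is the joint continuity of the shift action at $(a^*,v^*)$; since $X$ is a fractional power space embedded in $C^1(S^1)$, translations form a strongly continuous one-parameter group on $X$, which together with convergence $\varphi(t_n,\cdot;u_2,g)\to v^*$ in $X$ suffices for $\sigma_{a_n}\varphi(t_n,\cdot;u_2,g)\to\sigma_{a^*}v^*$, so no further work is needed.
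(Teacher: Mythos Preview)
Your proof is correct and follows essentially the same approach as the paper's: extract convergent subsequences using compactness of $\Omega$ and of $S^1$, then use translation invariance and continuity of the shift action to match up the two limits. The only cosmetic difference is that the paper shifts both orbits (choosing $a_1^n,a_2^n\in S^1$) and then sets $a^*=a_1^*-a_2^*$, whereas you shift only the second orbit; your version is slightly more direct.
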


\begin{proof}
 We only prove the case that $t_n\to \infty$, while the case that $s_n\to -\infty$ is similar. By the definition of metric on $\tilde X\times H(f)$, it then follows from $\tilde\Pi^{t_n}([u_1],g)-\tilde\Pi^{t_n}([u_2],g)\to 0$  that there exists $a_i^n\in S^1$ ($i=1,2$) such that
\begin{equation}\label{proximal-seuqen}
   \Pi^{t_{n}}(\sigma_{a_1^n}u_1,g)-\Pi^{t_{n}}(\sigma_{a_2^n}u_2,g)\to 0.
\end{equation}
Since both $\O$ and $S^1$ are compact, one may assume
\begin{equation}\label{E:u-i-a-i-conver}
a_i^n\to a_i^*\in S^1 \text{ and }\Pi^{t_n}(u_i,g)\to (u_i^*,g^*)\in \Omega
\end{equation} as $n\to\infty$, for $i=1,2$. Recall also that
\begin{equation*}
\begin{split}
\|\varphi(t_{n},\cdot;\sigma_{a^n_i} u_i,g)-\sigma_{a^*_i}u_i^*\|&\le \|\varphi(t_{n},\cdot;\sigma_{a^n_i} u_i,g)-\varphi(t_{n},\cdot;\sigma_{a^*_i} u_i,g)\|+\|\varphi(t_{n},\cdot;\sigma_{a^*_i} u_i,g)-\sigma_{a^*_i}u_i^*\|\\
&= \|\sigma_{a^n_i}\varphi(t_{n},\cdot; u_i,g)-\sigma_{a^*_i}\varphi(t_{n},\cdot; u_i,g)\|+\|\sigma_{a^*_i} \varphi(t_{n},\cdot;u_i,g)-\sigma_{a^*_i}u_i^*\|\\
&= \|\sigma_{a^n_i-a^*_i}\varphi(t_{n},\cdot; u_i,g)-\varphi(t_{n},\cdot; u_i,g)\|+\| \varphi(t_{n},\cdot;u_i,g)-u_i^*\|,
\end{split}
\end{equation*}
where the last two equalities are due to the translation invariance of the semiflow and
the metric $d_X(\cdot,\cdot)$, respectively. Together with \eqref{E:u-i-a-i-conver} and the compactness of $\Omega$, this implies that $\|\varphi(t_{n},\cdot;\sigma_{a^n_i} u_i,g)-\sigma_{a^*_i}u_i^*\|\to 0$ as $n\to \infty$, that is, $\Pi^{t_n}(\sigma_{a^n_i}u_i,g)\to (\sigma_{a^*_i}u_i^*,g^*)$ as $n\to \infty$, for $i=1,2$. Combing with \eqref{proximal-seuqen}, one has $\sigma_{a^*_1}u^*_1=\sigma_{a^*_2}u^*_2$. Let $u^*=u_1^*$ and $a^*=a^*_1-a^*_2$, then we have $\Pi^{t_n}(u_1,g)\to (u^*,g^*)$ and $\Pi^{t_n}(u_2,g)\to (\sigma_{a^*}u^*,g^*)$ as $n\to\infty$. The proof of this lemma is completed.
\end{proof}
\begin{lemma}\label{inducedproxi}
    Assume that {\bf (H1)} holds  and $\dim V^u(\O)>0$. Let $\tilde M\subset \tilde\Omega$ be any minimal set of $\tilde{\Pi}^t$. Then for any $([u_1],g)\in \tilde M$ and $([u_2],g)\in \tilde \Omega\setminus \tilde M$, $\{([u_1],g),([u_2],g)\}$ can not be two sided proximal pair.
\end{lemma}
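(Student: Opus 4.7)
The plan is a proof by contradiction: lift the two-sided proximality from the quotient $\tilde X \times H(f)$ back to $X \times H(f)$ via Lemma \ref{proximal-equivalent}, then invoke Lemma \ref{proximalcons} to derive a contradiction. Suppose, for contradiction, that $([u_1],g) \in \tilde M$ and $([u_2],g) \in \tilde\Omega \setminus \tilde M$ form a two-sided proximal pair. By Corollary \ref{induced-property3}, choose the minimal set $M \subset \Omega$ with $\tilde M = \{([u],g):(u,g)\in M\}$; choose representatives so that $(u_1,g) \in M$ and $(u_2,g) \in \Omega \setminus \Sigma M$. Applying Lemma \ref{proximal-equivalent} separately to the positive and negative proximal sequences yields $t_n \to +\infty$, $s_n \to -\infty$, shifts $a^*, b^* \in S^1$, and limits $(u^*,g^*), (v^*,h^*)$ with
\[
\Pi^{t_n}(u_1,g) \to (u^*,g^*),\ \Pi^{t_n}(\sigma_{a^*} u_2,g) \to (u^*,g^*),
\]
\[
\Pi^{s_n}(u_1,g) \to (v^*,h^*),\ \Pi^{s_n}(\sigma_{b^*} u_2,g) \to (v^*,h^*).
\]
Since $(u_1,g)\in M$ and $M$ is closed and $\Pi^t$-invariant, both $(u^*,g^*),(v^*,h^*)\in M$.

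Next, I would split on the structure of $M$ via Lemma \ref{homogeneous}. Case (b): $\dim V^c(M)=0$, so $M$ is a spatially-homogeneous $1$-cover of $H(f)$. Then $u^*$ and $v^*$ are constants in $x$, so $\sigma_c u^* = u^*$ and $\sigma_c v^* = v^*$ for all $c\in S^1$. Consequently $\Pi^{t_n}(u_2,g) = \sigma_{-a^*}\Pi^{t_n}(\sigma_{a^*}u_2,g) \to (u^*,g^*)$ and similarly $\Pi^{s_n}(u_2,g) \to (v^*,h^*)$. Thus $(u_1,g)$ and $(u_2,g)$ form a two-sided proximal pair in $X \times H(f)$, and since $(u_2,g) \in \Omega \setminus \Sigma M \subseteq \Omega \setminus M$, Lemma \ref{proximalcons} supplies the contradiction.

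Case (a): $\dim V^c(M)=1$ and $M$ is spatially-inhomogeneous, with $M^c(\omega,\delta^*) \subseteq \Sigma u$ for every $(u,g)\in M$ (and some uniform $\delta^*>0$). Here the pair $(u_1,g),(\sigma_{a^*} u_2,g)$ is automatically positively proximal in $X \times H(f)$, while the candidate negative limit is $(\sigma_{a^*-b^*} v^*,h^*)$ rather than $(v^*,h^*)$. The key remaining step is to show that one can in fact arrange $a^*=b^*$. My plan is to exploit the identification, from Lemma \ref{homogeneous}(a), of the $1$-dimensional center manifold of $M$ with the spatial-translation circle $\Sigma u$, together with the constancy of the zero number along pairs of $M$-fibers (Lemma \ref{normhyper-minimal}(iii)) and the monotonicity / eventual constancy of $z(\varphi(t,\cdot;u_1,g)-\varphi(t,\cdot;\sigma_{a}u_2,g))$ in $t$ (Corollary \ref{difference-lapnumber}). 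Concretely, I would define the ``best-tracking'' shift $a(t) = \arg\min_{a\in S^1}\norm{\varphi(t,\cdot;u_1,g) - \sigma_a \varphi(t,\cdot;u_2,g)}$ on the set where $\rho(t) := d_H(\Sigma\varphi(t,\cdot;u_1,g), \Sigma\varphi(t,\cdot;u_2,g))$ is small, show that $a(\cdot)$ is well-defined and continuous there using the transverse center-manifold structure, and use the zero-number constraint to force $a(t_n)\to a^*$ and $a(s_n)\to b^*$ to coincide. Once $a^*=b^*$, the pair $(u_1,g),(\sigma_{a^*}u_2,g)$ is genuinely two-sided proximal in $X \times H(f)$, and Lemma \ref{proximalcons} applied within $\sigma_{a^*}\Omega$ (whose Sacker--Sell spectrum agrees with that of $\Omega$ by Lemma \ref{traslation}, and whose minimal set $\sigma_{a^*}M$ contains $(u_1,g)$ while missing $(\sigma_{a^*}u_2,g)$) produces the contradiction.

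The main obstacle is precisely the coincidence $a^*=b^*$ in Case (a); the shifts are chosen by passing to subsequences and a priori depend on those choices. Overcoming this requires coupling the center-manifold tracking with the zero-number rigidity on $M$-fibers, and this is the step I expect to require the most care.
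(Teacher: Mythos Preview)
Your setup and the treatment of the spatially-homogeneous case (your Case~(b)) match the paper exactly. The difficulty, as you correctly identify, is Case~(a), and here your proposal has a genuine gap at two places. First, the coincidence $a^*=b^*$ is not something one can expect to force: the shifts $a^*$ and $b^*$ arise from \emph{different} subsequences (one as $t\to+\infty$, one as $t\to-\infty$), and your best-tracking function $a(t)$ need only be defined on a disconnected set of times where $\rho(t)$ is small; there is no continuity bridge between the forward and backward regimes, and the zero-number invariants you cite (Lemma~\ref{normhyper-minimal}(iii), Corollary~\ref{difference-lapnumber}) constrain $z(\cdot)$, not the actual value of the optimal shift. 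Second, even granting $a^*=b^*$, your final step is incorrect: you assert that $(u_1,g)\in\sigma_{a^*}M$, but $\sigma_{a^*}M=\{(\sigma_{a^*}u,g):(u,g)\in M\}$, and $(u_1,g)\in M$ gives no reason for $(\sigma_{-a^*}u_1,g)$ to lie in $M$ as well (Lemma~\ref{normhyper-minimal}(ii) only says the $M$-fiber over $g$ is \emph{contained in} $\Sigma u_g$, not equal to it). Thus the two points $(u_1,g)$ and $(\sigma_{a^*}u_2,g)$ do not sit together in any single $\omega$-limit set, and Lemma~\ref{proximalcons} does not apply.

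The paper does not attempt to align the shifts at all. Instead it proves directly, using the two proximal limits (with possibly different shifts $a^*$, $a^{**}$) together with Lemma~\ref{normhyper-minimal}(iii) and Corollary~\ref{difference-lapnumber}, the uniform zero-number identity
\[
z(\varphi(t,\cdot;\sigma_a u_1,g)-\varphi(t,\cdot;u_2,g))=N_u\quad\text{for all }t\in\mathbb{R},\ a\in S^1.
\]
It then derives the contradiction by an invariant-manifold argument: using Lemma~\ref{hyperbolic1}(ii) to find a point $v_n$ in $M^u\cap M^{cs}$ (or $M^s\cap M^{cu}$, depending on the parity of $\dim V^u(\Omega)$) of two nearby orbits, ruling out $v_n\in M^c$ via the exponential contraction in Remark~\ref{stable-leaf}(1), and then combining the stable/unstable leaf foliation (Remark~\ref{stable-leaf}(4), Lemma~\ref{homogeneous}(a)) with the zero-number bounds of Lemma~\ref{zerocenter} to force $z=N_u\pm 2$ at a point where the uniform identity above says $z=N_u$. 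This bypasses Lemma~\ref{proximalcons} entirely in the inhomogeneous case.
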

\begin{proof}
 By Lemma \ref{induced1-1cover}(i), $\tilde M$ is $1$-cover of $H(f)$;  and moreover, Corollary \ref{induced-property3} implies that there exists a minimal set $M\subset \O$ such that $\tilde M=\{([u],g)|(u,g)\in M\}$.  Suppose that there are $([u_1],g)\in \tilde M$ and $([u_2],g)\in \tilde \O\setminus \tilde M$ (hence, one has $(u_1,g)\in M$ and $(u_2,g)\in \O\setminus \Sigma M$) such that $\{([u_1],g),([u_2],g)\}$ forms a two sided proximal pair. By virtue of Lemma \ref{proximal-equivalent}, there are $a^*,a^{**}\in S^1$, as well as two sequences $t_n\to \infty$ and $s_n\to -\infty$, such that
\begin{equation}\label{posi-proxi2}
\begin{split}
 \Pi^{t_n}(u_1,g)\to(u^*,g^*) \,\,\text{ and }\,\,
 \Pi^{t_n}(\sigma_{a^*}u_2,g)\to(u^*,g^*)
\end{split}
\end{equation}
and
\begin{equation}\label{nega-proxi2}
\begin{split}
 \Pi^{s_n}(u_1,g)\to(u^{**},g^{**}) \,\,\text{ and }\,\,
 \Pi^{s_n}(\sigma_{a^{**}}u_2,g)\to(u^{**},g^{**})
\end{split}
\end{equation}
as $n\to\infty$, where $(u^*,g^*), (u^{**},g^{**})\in M$.

Since {\bf (H1)} holds and  $\dim V^u(\O)>0$, the minimal set $M$ satisfies one of the cases (a)-(b) in Lemma \ref{homogeneous}. In the following, we will show that both of these two cases lead to certain contradiction, respectively. Based on this, one can conclude that $\{([u_1],g),([u_2],g)\}$ is not two sided proximal pair.

Case (i). If $M$ satisfies (b) in Lemma \ref{homogeneous}, then $M$ is a spatially-homogeneous $1$-cover of $H(f)$. In particular, $u^*,u^{**}$ are spatially-homogeneous. So, \eqref{posi-proxi2} and \eqref{nega-proxi2} turn out to be
\begin{equation*}\label{posi-proxi3}
\begin{split}
 \Pi^{t_n}(u_1,g)\to(u^*,g^*) \,\,\text{ and }\,\,
\Pi^{t_n}(u_2,g)\to(u^*,g^*)
\end{split}
\end{equation*}
and
\begin{equation*}\label{nega-proxi3}
\begin{split}
 \Pi^{s_n}(u_1,g)\to(u^{**},g^{**}) \,\,\text{ and }\,\,
 \Pi^{s_n}(u_2,g)\to(u^{**},g^{**}).
\end{split}
\end{equation*}
Hence, $\{(u_1,g),(u_2,g)\}$ is a two sided proximal pair, contradicting to Lemma \ref{proximalcons}.

Case (ii). If $M$ satisfies (a) in Lemma \ref{homogeneous}, then we {\it claim that
\begin{equation}\label{constant-cent1}
 z(\varphi(t,\cdot;\sigma_au_1,g)-\varphi(t,\cdot;u_2,g))=N_u\quad \text{for all }t\in \mathbb{R}\,
 \text{and }a\in S^1,
\end{equation}
where $N_u$ is defined in \eqref{E:Nu-def}.} Before giving the proof of this claim, we will first show how this claim induces certain contradiction.

In fact, by virtue of Lemma \ref{zero-cons-local} and the compactness of $S^1$, the claim \eqref{constant-cent1} implies that there exists $\delta>0$ (independent of $a\in S^1$) such that
\begin{equation}\label{case-A-const-1}
z(u_2-\sigma_a u_1+v)=N_u,\quad \text{for any $a\in S^1$ and $\|v\|<\delta$.}
\end{equation}
When $\dim V^u(\Omega)$ is even (resp. $\dim V^u(\Omega)$ is odd), we let $a_0=2\pi-a^*$ (resp. $a_0=2\pi-a^{**}$). Then, together with \eqref{posi-proxi2} (resp. \eqref{nega-proxi2}), Lemma \ref{hyperbolic1}(ii) implies there exists $v_n\in M^u(\varphi(t_n,\cdot;u_2,g),g\cdot t_n,\delta^*)\cap M^{cs}(\varphi(t_n,\cdot;\sigma_{a_0}u_1,g),g\cdot t_n,\delta^*)$ (resp. $v_n\in M^s(\varphi(s_n,\cdot;u_2,g),g\cdot s_n,\delta^*)\cap M^{cu}(\varphi(s_n,\cdot;\sigma_{a_0}u_1,g),g\cdot s_n,\delta^*)$) for all $n$ sufficiently large.

We now {\it assert} that $v_n\notin M^c(\varphi(t_n,\cdot;\sigma_{a_0}u_1,g),g\cdot t_n,\delta^*)$ (resp. $v_n\notin M^c(\varphi(s_n,\cdot;\sigma_{a_0}u_1,g),g\cdot s_n,\delta^*)$). Indeed, suppose not, then one can replace $M$ by $\sigma_{a_0} M$  in Lemma \ref{homogeneous}(a) (because of the minimality of $\sigma_{a_0} M$ and $\dim V^c(\sigma_{a_0}M)=\dim V^c(M)=1$), and obtains that $v_n=\sigma_{a_n} \varphi(t_n,\cdot;\sigma_{a_0}u_1,g)$ (resp. $v_n=\sigma_{a_n} \varphi(s_n,\cdot;\sigma_{a_0}u_1,g)$) for some $a_n\in S^1$. Observe that $v_n\in M^u(\varphi(t_n,\cdot;u_2,g),g\cdot t_n,\delta^*)$ (resp. $M^s(\varphi(s_n,\cdot;u_2,g),g\cdot s_n,\delta^*)$), one has
\begin{equation}\label{backward-contracting-1}
\begin{split}
&\|\sigma_{a_n+a_0}u_1-u_2\|=\norm{\varphi(-t_n,\cdot;v_n,g\cdot t_n)-u_2}
\leq Ce^{-\frac{\alpha}{2} t_n}\|v_n-\varphi(t_n,\cdot;u_2,g)\|\le C\delta^*e^{-\frac{\alpha}{2} t_n}.\\
&(\mathrm{resp}.\ \|\sigma_{a_n+a_0}u_1-u_2\|=\norm{\varphi(-s_n,\cdot;v_n,g\cdot s_n)-u_2}
\leq Ce^{\frac{\alpha}{2} s_n}\|v_n-\varphi(s_n,\cdot;u_2,g)\|\le C\delta^*e^{\frac{\alpha}{2}s_n}).
\end{split}
\end{equation}
Since $u_{2}\notin \Sigma u_1$, $\varepsilon_0:=\inf_{a\in S^1}\|\sigma_{a}u_{1}-u_{2}\|>0$. But, by letting $n$ large enough in \eqref{backward-contracting-1}, one can obtain that $\|\sigma_{a_n+a_0}u_1-u_2\|<\varepsilon_0/2$, a contradiction. So, we have proved $v_n\notin M^c(\varphi(t_n,\cdot;\sigma_{a_0}u_1,g),\delta^*)$ (resp. $v_n\notin M^c(\varphi(s_n,\cdot;\sigma_{a_0}u_1,g),\delta^*)$).

Recall that $v_n\in M^{cs}(\varphi(t_n,\cdot;\sigma_{a_0} u_1,g),\delta^*)$ (resp. $v_n\in M^{cu}(\varphi(s_n,\cdot;\sigma_{a_0} u_1,g),\delta^*)$). By Remark \ref{stable-leaf}(4) and Lemma \ref{homogeneous}(a), there is some $\tilde a_n\in S^1$ such that $v_n\in M^{s}(\sigma_{\tilde a_n}\varphi(t_n,\cdot;\sigma_{a_0}u_1,g),\delta^*)$ (resp. $v_n\in M^{u}(\sigma_{\tilde a_n}\varphi(s_n,\cdot;\sigma_{a_0}u_1,g),\delta^*)$) with $\sigma_{\tilde a_n}\varphi(t_n,\cdot;\sigma_{a_0}u_1,g)\in M^c(\varphi(t_n,\cdot;\sigma_{a_0}u_1,g),\delta^*)$ (resp. $\sigma_{\tilde a_n}\varphi(s_n,\cdot;\sigma_{a_0}u_1,g)\in M^c(\varphi(s_n,\cdot;\sigma_{a_0}u_1,g),\delta^*)$) for $n$ sufficiently large. Recall that $\dim V^u(\Omega)$ is even (resp. $\dim V^u(\Omega)$ is odd), Lemma \ref{zerocenter}(3) (resp. Lemma \ref{zerocenter}(2)) entails that $z(v_n-\sigma_{\tilde a_n}\varphi(t_n,\cdot;\sigma_{a_0}u_1,g))\ge N_u+2$ (resp. $z(v_n-\sigma_{\tilde a_n}\varphi(s_n,\cdot;\sigma_{a_0}u_1,g))\le N_u-2$), for $n$ sufficiently large. Therefore, by Corollary \ref{difference-lapnumber}(a)
\begin{equation}\label{u-3-Nu-large}
z(\varphi(-t_n,\cdot;v_n,g\cdot t_n)-\sigma_{\tilde a_n+a_0}u_1)\geq N_u+2, \,\,\,\,(\mathrm{resp}.\ z(\varphi(-s_n,\cdot;v_n,g\cdot s_n)-\sigma_{\tilde a_n+a_0}u_1)\leq N_u-2),
\end{equation}
for $n$ sufficiently large.
On the other hand, \eqref{backward-contracting-1} implies that $\|\varphi(-t_n,\cdot;v_n,g\cdot t_n)-u_2\|<\delta$ (resp. $\|\varphi(-s_n,\cdot;v_n,g\cdot s_n)-u_2\|<\delta$), for $n$ sufficiently large, where $\delta>0$ is as defined in \eqref{case-A-const-1}. Therefore, by using \eqref{case-A-const-1}, one has
\begin{equation*}
\begin{split}
z(\varphi(-t_n,\cdot;v_n,g\cdot t_n)-\sigma_{\tilde a_n+a_0}u_1)&=z(\varphi(-t_n,\cdot;v_n,g\cdot t_n)-u_2+u_2-\sigma_{\tilde a_n+a_0}u_1)\\
&=z(u_2-\sigma_{\tilde a_n+a_0}u_1)=N_u,\\
(\mathrm{resp}.\ z(\varphi(-s_n,\cdot;v_n,g\cdot s_n)-\sigma_{\tilde a_n+a_0}u_1)&=z(\varphi(-s_n,\cdot;v_n,g\cdot s_n)-u_2+u_2-\sigma_{\tilde a_n+a_0}u_1)\\
&=z(u_2-\sigma_{\tilde a_n+a_0}u_1)=N_u),
\end{split}
\end{equation*}
for $n\gg 1$. Consequently, we have obtained a contradiction to \eqref{u-3-Nu-large}. Thus, based on the claim \eqref{constant-cent1}, we have obtained certain ``contradiction" for Case (ii). Therefore, as we mentioned above, this implies that $\{([u_1],g),([u_2],g)\}$ can not be two sided proximal pair.
\vskip 1mm

Finally, it remains to prove the claim \eqref{constant-cent1}. Indeed, given any $a\in S^1$ with $\sigma_a u^*\neq u^*$,  Lemma \ref{normhyper-minimal}(iii) means that $\sigma_a u^*-u^*$ has only simple zeros and $z(\sigma_a u^*-u^*)=N_u$. Thus, by Lemma \ref{zero-cons-local} and \eqref{posi-proxi2}, one has
\begin{equation*}
  z(\varphi(t_n,\cdot;\sigma_a u_1,g)-\varphi(t_n,\cdot;\sigma_{a^*}u_2,g))=z(\sigma_a u^*-u^*)=N_u,
\end{equation*}
for $n$ sufficiently large. So, Corollary \ref{difference-lapnumber}(c) immediately reveals that, for any  $a\in S^1$ with $\sigma_a u^*\neq u^*$, there is $T_a\in \mathbb{R}$ such that
\begin{equation}\label{constant-cent}
  z(\varphi(t,\cdot;\sigma_a u_1,g)-\varphi(t,\cdot;\sigma_{a^*}u_2,g))=N_u, \quad t\geq T_a.
\end{equation}
Meanwhile, we also need to consider the element $a_0\in S^1$ with $\sigma_{a_0} u^*= u^*$. For such $a_0\in S^1$, Corollary \ref{difference-lapnumber}(c) implies there are $N_0\in\mathbb{N}$ and $T_0>0$ such that
\begin{equation*}
  z(\varphi(t,\cdot;\sigma_{a_0}u_1,g)-\varphi(t,\cdot;\sigma_{a^{*}}u_2,g))=N_0,
\end{equation*}
for all $t\geq T_0$. So, by Lemma \ref{zero-cons-local}, there is $\delta_0>0$ such that for any $a\in S^1$ with $|a-a_0|<\delta_0$, one has $z(\varphi(T_0,\cdot;\sigma_a u_1,g)-\varphi(T_0,\cdot;\sigma_{a^*}u_2,g))=N_0$. Recall that $u^*$ is spatially-inhomogeneous in Lemma \ref{homogeneous}(a). Then there also exists $\tilde a\in S^1$ with $|\tilde a-a_0|<\delta$ satisfies $\sigma_{\tilde a}u^*\neq u^*$ and $z(\varphi(T_0,\cdot;\sigma_{\tilde a} u_1,g)-\varphi(T_0,\cdot;\sigma_{a^{*}}u_2,g))=N_0$. By \eqref{constant-cent} and Corollary \ref{difference-lapnumber}, one has $N_0\geq N_u$. Thus, it follows that
\begin{equation*}
 z(\varphi(t,\cdot;\sigma_au_1,g)-\varphi(t,\cdot;\sigma_{a^*}u_2,g))\ge N_u\quad \text{ for all }a\in S^1\text{and } t\in \mathbb{R},
\end{equation*}
or equivalently,
\begin{equation}\label{geq-estimate}
 z(\varphi(t,\cdot;\sigma_au_1,g)-\varphi(t,\cdot;u_2,g))\ge N_u\quad \text{ for all }a\in S^1\text{and } t\in \mathbb{R}.
\end{equation}

By repeating the similar deduction under the situation \eqref{nega-proxi2}, one can also obtain that
\begin{equation}\label{leq-estimate}
 z(\varphi(t,\cdot;\sigma_au_1,g)-\varphi(t,\cdot;\sigma_{a^{**}}u_2,g))\le N_u, \quad t\in\mathbb{R},
\end{equation}
for all $a\in S^1$ with $\sigma_{a} u^{**}\neq u^{**}$. Meanwhile, for  the element $a_1\in S^1$ with $\sigma_{a_1} u^{**}=u^{**}$, we need to consider two subcases:
\begin{equation*}
\begin{split}
  & \text{(Sub-I):}\ \|\varphi(t,\cdot;\sigma_{a_1} u_1,g)-\varphi(t,\cdot;\sigma_{a^{**}}u_2,g)\|\to 0 \,\,(\text{as }t\to -\infty), \ \text{and}\\
  & \text{(Sub-II):}\ \|\varphi(t,\cdot;\sigma_{a_1} u_1,g)-\varphi(t,\cdot;\sigma_{a^{**}}u_2,g)\|\nrightarrow 0\,\,(\text{as }t\to -\infty),\ \text{respectively}.
\end{split}
\end{equation*}

When (Sub-I) holds, by Remark \ref{stable-leaf}(3), we have $\varphi(t,\cdot;\sigma_{a^{**}}u_2,g)\in M^{cu}(\Pi^{t}(\sigma_{a_1} u_1,g),\delta^*)$ for $t$ sufficiently negative. Thus, by Lemma \ref{zerocenter} (2) or (3) (depending on whether $\dim V^u(\Omega)$ is odd or even), there is $T>0$ such that
\begin{equation}\label{leq-estimate1}
  z(\varphi(t,\cdot;\sigma_{a_1}u_1,g)-\varphi(t,\cdot;\sigma_{a^{**}}u_2,g))\le N_u,
\end{equation}
for all $t<-T$.

When (Sub-II) holds, there exist $l_n\to -\infty$ and two distinct points $(\tilde u_1,\tilde g)\in \sigma_{a_1} M_1$, $(\tilde u_2,\tilde g)\in \sigma_{a^{**}} \Omega$, such that $\Pi^{l_n}(\sigma_{a_1} u_1,g) \to (\tilde u_1,\tilde g)$ and $\Pi^{l_n}(\sigma_{a^{**}} u_2,g) \to (\tilde u_2,\tilde g)$ as $n\to\infty$. By Lemma \ref{sequence-limit}, $\tilde u_1-\tilde u_2$ has only simple zeros on $S^1$. Let $N_1=z(\tilde u_1-\tilde u_2)$, Then Lemma \ref{zero-cons-local} implies that
\begin{equation*}
  z(\varphi(l_n,\cdot;\sigma_{a_1}u_1,g)-\varphi(l_n,\cdot;\sigma_{a^{**}}u_2,g))=N_1,
\end{equation*}
for $n$ sufficiently large. So, again by Corollary \ref{difference-lapnumber}(a), there is $T_1\in \mathbb{R}$ such that
\begin{equation*}
  z(\varphi(t,\cdot;\sigma_{a_1}u_1,g)-\varphi(t,\cdot;\sigma_{a^{**}}u_2,g))=N_1,
\end{equation*}
for all $t\leq T_1$. Choose some $\delta_1>0$ such that for any $a\in S^1$ with $|a-a_1|<\delta_0$, one has $z(\varphi(T_1,\cdot;\sigma_a u_1,g)-\varphi(T_1,\cdot;\sigma_{a^{**}}u_2,g))=N_1$. Noticing again that $u^{**}$ is spatially-inhomogeneous, there also exists $\tilde a\in S^1$ with $|\tilde a-a_1|<\delta_1$ satisfies $\sigma_{\tilde a}u^{**}\neq u^{**}$ and $z(\varphi(T_1,\cdot;\sigma_{\tilde a} u_1,g)-\varphi(T_1,\cdot;\sigma_{a^{**}_2}u_2,g))=N_1$. So, by \eqref{leq-estimate}, one has $N_1\leq N_u$. Thus, we also obtain \eqref{leq-estimate1} for subcase (Sub-II). Therefore, from \eqref{leq-estimate1}, we have
\begin{equation*}
 z(\varphi(t,\cdot;\sigma_au_1,g)-\varphi(t,\cdot;\sigma_{a^{**}}u_2,g))\le N_u,\quad \text{ for all }a\in S^1,\ t\in \mathbb{R}.
\end{equation*}
In other words,
\begin{equation}\label{leq-estimate2}
 z(\varphi(t,\cdot;\sigma_au_1,g)-\varphi(t,\cdot;u_2,g))\le N_u\quad \text{ for all }a\in S^1\text{and } t\in \mathbb{R}.
\end{equation}
Combing \eqref{leq-estimate2} with \eqref{geq-estimate}, we have proved the claim \eqref{constant-cent1}. The proof of Lemma \ref{inducedproxi} is completed.
\end{proof}

\section{Structure of $\omega$-limit set $\O$}

In this section, we will investigate the structure of the $\omega$-limit set $\O:=\omega(u_0,g_0)$ of any bounded positive orbit of $\Pi^{t}(u_0,g_0)$ for \eqref{equation-lim2}.
We first state three main Theorems of this paper, followed by the proofs of these theorems in three separated subsections.

\begin{theorem}\label{hyperbolic0}
    Assume that the $\omega$-limit set $\Omega$ satisfies {\bf (H0)}. Then $\Omega$ is spatially-homogeneous and $1$-cover of $H(f)$.
\end{theorem}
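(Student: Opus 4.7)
The plan is to show first that every minimal set $M\subset\Omega$ is a spatially-homogeneous $1$-cover of $H(f)$, and then that $\Omega$ itself equals such an $M$. The first step is immediate: by Remark \ref{invari-space}, $\dim V^c(M)\le\dim V^c(\Omega)=0$, so $M$ is hyperbolic, and Lemma \ref{hyperbolic-minimal}(1) yields the conclusion. Being a compact graph over $H(f)$, such $M$ has the form $M=\{(c(g),g):g\in H(f)\}$ with $c:H(f)\to\mathbb{R}$ continuous.

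For the second step, I would argue by contradiction. Suppose $(u,g)\in\Omega\setminus M$ for some minimal $M\subset\Omega$, and set out to construct a two-sided proximal pair in $\Omega$, contradicting Lemma \ref{hyperbolic2} when $\dim V^u(\Omega)>0$. The forward half is straightforward: choose $t_n\to\infty$ with $\Pi^{t_n}(u,g)\to(c'(g^*),g^*)$ lying in some minimal $M'\subset\omega(u,g)\subset\Omega$; by continuity of the section $c'$ associated with $M'$, one gets $\Pi^{t_n}(c'(g),g)=(c'(g\cdot t_n),g\cdot t_n)\to(c'(g^*),g^*)$, so $(u,g)$ and $(c'(g),g)$ are positively proximal. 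The analogous argument using a selected backward orbit of $(u,g)$ inside the compact invariant set $\Omega$ yields a negatively proximal pair $\{(u,g),(c''(g),g)\}$ for some minimal $M''\subset\Omega$.

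To align these two proximalities into a genuine two-sided pair (i.e.\ $M'=M''$), I would establish the uniqueness of the minimal set inside $\Omega$. Two distinct minimal $1$-covers $M_1,M_2\subset\Omega$ would have continuous sections $c_1,c_2$ with $c_1(g)\ne c_2(g)$ for all $g\in H(f)$ by Lemma \ref{induced1-1cover}(ii-a); continuity and connectedness of $H(f)$ then yield uniform separation (Lemma \ref{induced1-1cover}(ii-b)), and the connectedness of $\Omega$ together with the original orbit accumulating on both $M_1$ and $M_2$ would force intermediate orbits in $\Omega\setminus(M_1\cup M_2)$, whose proximal pairs (obtained by the construction above) would themselves contradict Lemma \ref{hyperbolic2}. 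With uniqueness in place one has $M=M'=M''$, so that $\{(u,g),(c(g),g)\}$ is the desired two-sided proximal pair. The residual subcase $\dim V^u(\Omega)=0$ is handled separately: then $V^s(\Omega)=X$ and the local unstable manifolds are trivial, so by Lemma \ref{hyperbolic1}(i) any two fiberwise close points in $\Omega$ belong to each other's stable manifold and merge exponentially forward; any backward orbit of a point $(u,g)\in\Omega\setminus M$ in $\Omega$ is then forced away from $M$ backward in an unbounded way, contradicting its boundedness inside $\Omega$. The main obstacle is precisely the uniqueness of the minimal set in $\Omega$, which relies on the quantitative separation provided by Lemma \ref{induced1-1cover}(ii) and a careful connecting-orbit analysis on the connected compact invariant set $\Omega$.
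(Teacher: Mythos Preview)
Your argument has a genuine gap in the step where you try to rule out two distinct minimal sets $M_1,M_2\subset\Omega$. You correctly observe that connectedness of $\Omega$ forces intermediate points $(u_{12},g)\in\Omega\setminus(M_1\cup M_2)$, but your ``construction above'' applied to such a point only yields a \emph{positively} proximal pair $\{(u_{12},g),(c'(g),g)\}$ with $c'$ the section of one minimal set $M'$, and a \emph{negatively} proximal pair $\{(u_{12},g),(c''(g),g)\}$ with $c''$ the section of another $M''$. Nothing prevents $M'=M_1$ and $M''=M_2$: a heteroclinic-type orbit running from $M_2$ backward to $M_1$ forward is precisely the situation you must exclude, and in that case neither pair is two-sided. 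So Lemma~\ref{hyperbolic2} simply does not apply, and your uniqueness argument is circular: it presupposes the very conclusion (coincidence of forward and backward limiting minimal sets) that it is meant to establish.

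The paper closes this gap with a zero-number argument rather than a proximal-pair one. Assuming $\dim V^u(\Omega)>0$ (hence odd, by Lemma~\ref{hyperbolic2}), a connecting orbit with $\Pi^t(u_{12},g)\to M_1$ as $t\to\infty$ enters $M^s(\Pi^t(u_1,g),\delta^*)$ for $t\gg1$, and Lemma~\ref{zerocenter}(1) gives $z(\varphi(t,\cdot;u_{12},g)-\varphi(t,\cdot;u_1,g))\ge N_u\ge 2$ for all $t$ by monotonicity. On the other hand, along $t_n\to-\infty$ with $\Pi^{t_n}(u_{12},g)\to(u_2,g)\in M_2$ one gets $z(\varphi(t_n,\cdot;u_{12},g)-\varphi(t_n,\cdot;u_1,g))\to z(u_1-u_2)=0$, since both $u_1,u_2$ are spatially constant. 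This contradiction is what actually eliminates case~(iii). Your proximal-pair idea does handle case~(ii) (unique minimal set, nontrivial remainder), but for case~(iii) you need the zero number. For the subcase $\dim V^u(\Omega)=0$, the paper invokes uniform stability of $\Omega$ and \cite[Theorem~II.2.8]{Shen1998} to conclude $\Omega$ is minimal directly; your backward-unboundedness sketch is in the right spirit but would need to be made precise.
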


\begin{theorem}\label{norma-hyper}
  Assume that the $\omega$-limit set $\Omega$ satisfies {\bf(H1)}. Then we have
\begin{itemize}
 \item[{ \rm (i)}] If $\dim V^u(\Omega)>0$, then there is a spatially-inhomogeneous minimal set $M\subset \O$ such that $\O\subset \Sigma M$.
Moreover, for any $g\in H(f)$, there exists $u_g\in X$ such that $p^{-1}(g)\cap \Omega\subset (\Sigma u_g,g)$;
 and there is a $C^1$-function $c^g:\mathbb{R}\to \mathcal{S}^1;t\mapsto c^g(t)$ (with its derivative $\dot{c}^g(t)$ being time-recurrent) such that
\begin{equation}\label{E:traveling-func}
\varphi(t,x,u_g,g)=u_{g\cdot t}(x+c^g(t)),
\end{equation}
where $\mathcal{S}^1=\mathbb{R}/L\mathbb{Z}$ and $L$ is the smallest common spatial-period of any element in $M$.

In particular, if $f$ in \eqref{equation-1} is uniformly almost-periodic in $t$, then the derivative  $\dot{c}^g(t)$ is almost-periodic in $t$.

 \item[{ \rm (ii)}] If $\dim V^u(\Omega)=0$, then $\Omega$ is spatially-homogeneous. Moreover, $\Omega$ contains at most two minimal sets and each minimal set is an almost $1$-cover of $H(f)$.
\end{itemize}
\end{theorem}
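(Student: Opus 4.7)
For Part (i), the plan is to pass to the quotient space $\tilde X\times H(f)$ and show that $\tilde\Omega$ coincides with a single minimal subset $\tilde M$; once this is established, $\Omega\subset\Sigma M$ follows, where $M\subset\Omega$ is the preimage of $\tilde M$. The spatial-inhomogeneity of $M$ then arises a posteriori: if $M$ were spatially-homogeneous, $\Omega=\Sigma M=M$ would give $\dim V^c(\Omega)=0$ by Lemma \ref{hyperbolic-minimal}, contradicting {\bf (H1)}. I start by picking any minimal $\tilde M\subset\tilde\Omega$; by Lemma \ref{induced1-1cover}(i) it is a $1$-cover of $H(f)$, so over each $g\in H(f)$ there is a unique fibre point.

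The core of the argument is to show $\tilde\Omega=\tilde M$. Suppose for contradiction $([u_2],g^*)\in\tilde\Omega\setminus\tilde M$, and let $([u_1^*],g^*)$ be the unique point of $\tilde M$ over $g^*$. I plan to derive two-sided proximality of this pair, contradicting Lemma \ref{inducedproxi}. For positive proximality, both points are approached by the forward orbit of $([u_0],g_0)$: there exist $t_n,s_n\to\infty$ with $\tilde\Pi^{t_n}([u_0],g_0)\to([u_2],g^*)$ and $\tilde\Pi^{s_n}([u_0],g_0)\to([u_1^*],g^*)$. By a diagonal choice I arrange $s_n-t_n\to\infty$ slowly enough (relative to the Lyapunov growth on $\tilde\Omega$) and with $g^*\cdot(s_n-t_n)\to g^*$; the semigroup identity $\tilde\Pi^{s_n-t_n}\circ\tilde\Pi^{t_n}([u_0],g_0)=\tilde\Pi^{s_n}([u_0],g_0)$ together with continuous dependence then gives $\tilde\Pi^{s_n-t_n}([u_2],g^*)\to([u_1^*],g^*)$, while the $1$-cover property of $\tilde M$ gives $\tilde\Pi^{s_n-t_n}([u_1^*],g^*)\to([u_1^*],g^*)$, establishing positive proximality. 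The symmetric argument applied to the backward orbit of $([u_2],g^*)$ in $\tilde\Omega$ (which exists by invariance and backward uniqueness of the parabolic semiflow on the compact invariant set) yields negative proximality.

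With $\tilde\Omega=\tilde M$ in hand, for each $g\in H(f)$ I select a representative $u_g$ of the unique class $[u_g]\in\tilde M\cap\tilde p^{-1}(g)$. By minimality of $M$, the stabilizer $\{a\in S^1:\sigma_au_g=u_g\}$ is a common closed subgroup $L\mathbb{Z}\subset 2\pi\mathbb{Z}$ for some $L$, defining $\mathcal{S}^1=\mathbb{R}/L\mathbb{Z}$. The identity $\varphi(t,\cdot;u_g,g)\in M\cap p^{-1}(g\cdot t)$ together with the $1$-cover property forces $\varphi(t,x;u_g,g)=u_{g\cdot t}(x+c^g(t))$ for a unique $c^g(t)\in\mathcal{S}^1$. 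The $C^1$-regularity of $c^g$ follows from the implicit function theorem applied to $F(t,c):=\varphi(t,0;u_g,g)-u_{g\cdot t}(c)$, using $u'_{g\cdot t}\not\equiv 0$ (spatial-inhomogeneity) and the joint smoothness of $\varphi$ in $(t,x)$. Substituting into the parabolic equation expresses $\dot c^g(t)$ as a continuous functional of $(u_{g\cdot t},g\cdot t)\in M$; since $t\mapsto(u_{g\cdot t},g\cdot t)$ is recurrent by minimality of $M$, so is $\dot c^g(t)$. In the uniformly almost-periodic case, $H(f)$ and $M$ are almost-periodic minimal flows, the functional is uniformly continuous on $M$, and Bochner's criterion yields almost-periodicity of $\dot c^g$.

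For Part (ii), with $\dim V^u(\Omega)=0$, every minimal $M\subset\Omega$ has $\dim V^u(M)=0$ by Remark \ref{invari-space}, hence is spatially-homogeneous and an almost $1$-cover by Lemma \ref{hyperbolic-minimal}. To show $\Omega$ itself is spatially-homogeneous, I use that every orbit in $\Omega$ is forward-attracted to such a minimal (no unstable direction), and Corollary \ref{difference-lapnumber} applied to the difference between a point of $\Omega$ and its spatially-homogeneous limit forces $z(u-v)\equiv 0$ and ultimately $u_x\equiv 0$ on $\Omega$. For the at-most-two-minimal-sets claim, two distinct spatially-homogeneous minimals $M_1,M_2$ give constant-in-$x$ sections $v_1(g)\ne v_2(g)$ that are strictly ordered (by Lemma \ref{constant-ontwoset}: the difference of two constants has zero number $0$, so the ordering persists for all $g$); a third minimal sandwiched between them would require a saddle-type connection, producing an unstable direction incompatible with $\dim V^u(\Omega)=0$. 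The principal obstacle in the whole proof is the argument of the second paragraph: making the diagonal/subsequence choice of $t_n$ and $s_n$ rigorous so that both the approximation error and the time-gap are jointly controlled, and verifying the $\alpha$-limit/backward-uniqueness step on $\tilde\Omega$ required for negative proximality.
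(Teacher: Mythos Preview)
Your plan for Part (i) has a genuine structural gap: you implicitly assume $\tilde\Omega$ contains a \emph{unique} minimal set. Your argument is: pick any minimal $\tilde M\subset\tilde\Omega$, then show every $([u_2],g^*)\in\tilde\Omega\setminus\tilde M$ is two-sided proximal to the fibre point of $\tilde M$, contradicting Lemma~\ref{inducedproxi}. But Lemma~\ref{gen-hyper2} allows a second alternative that you never exclude: $\tilde\Omega$ may contain \emph{two} distinct minimal sets $\tilde M_1,\tilde M_2$ (both $1$-covers) together with a nonempty connecting set $\tilde M_{12}$. For $([u_{12}],g)\in\tilde M_{12}$ one typically has $\omega([u_{12}],g)\subset\tilde M_1$ and $\alpha([u_{12}],g)\subset\tilde M_2$; such a point is \emph{not} two-sided proximal to either $\tilde M_i$, so Lemma~\ref{inducedproxi} gives no contradiction. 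Your diagonal $t_n,s_n$ construction cannot repair this: if $([u_2],g^*)$ itself lies in a second minimal set $\tilde M_2$, it is simply impossible for $\tilde\Pi^{\tau_n}([u_2],g^*)$ to approach $\tilde M_1$, since minimal sets are invariant and disjoint. The paper devotes the bulk of its proof (three Sub-Lemmas, one for each combination of spatial homogeneity/inhomogeneity of $M_1,M_2$) to ruling out precisely this two-minimal-set scenario, via delicate zero-number estimates combined with the stable/center/unstable foliations of Remark~\ref{stable-leaf} and Lemma~\ref{zerocenter}. None of that machinery appears in your outline.

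Even in the one-minimal-set case, your diagonal/continuous-dependence argument is both unnecessary and unjustified as written: you let $s_n-t_n\to\infty$ and then invoke continuous dependence over an unbounded time interval, which requires a contraction you do not have. The correct (and much simpler) route is the one the paper takes: once $\tilde M$ is the \emph{only} minimal set in $\tilde\Omega$, both $\omega([u_2],g^*)$ and $\alpha([u_2],g^*)$ must meet $\tilde M$ (they contain minimal subsets of $\tilde\Omega$), and the $1$-cover property immediately yields two-sided proximality. A smaller point: in your construction of $c^g$, the implicit-function step needs $u'_{g\cdot t}(c^g(t))\neq 0$, which is not guaranteed at an arbitrary point; the paper avoids this by normalising $u_g(0)=m(u_g)$ so that $u'_{g\cdot t}(0)=0$ and then using $u''_{g\cdot t}(0)\neq 0$ (from simplicity of zeros of $\varphi_x$) to run the IFT on $\varphi_x(t,-c;u_g,g)=0$.
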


\begin{remark}
  {\rm Theorems \ref{hyperbolic0}-\ref{norma-hyper} indicate that, {\it when $\dim V^c(\Omega)\le 1$}, $\Omega$ is either spatially-homogeneous or spatially-inhomogeneous; and moreover, {\it any spatially-inhomogeneous $\O$ can be embedded into an $H(f)$-time-recurrent forced circle flow on $S^1$.} In particular, {\it $\O$ can be embedded into an almost-periodically forced cicle flow on $S^1$ if $f$ in \eqref{equation-1} is uniformly almost-periodic in $t$.} On the other hand, some example will be presented in the Appendix to indicate that such imbedding property can not hold anymore when $\dim V^c(\Omega)>1$. Consequently, these phenomena yield that there are essential differences between time-periodic cases (see, e.g. \cite{SF1}) and time almost-periodic cases.}
\end{remark}

\begin{theorem}\label{structure-thm}
Assume that the $\omega$-limit set $\Omega$ satisfies one of the hypotheses {\bf (H0)-(H2)}.
Then one of the following alternatives must hold:
\begin{itemize}
\item[{ \rm (i)}] There is a minimal set $M\subset \Omega$ such that $\Omega\subset \Sigma M$;

\item[{ \rm (ii)}] There is a minimal set $M_1\subset \Omega$ such that $\Omega\subset \Sigma M_1\cup M_{11}$, where  $M_{11}\neq \emptyset$ and $M_{11}$ connects $\Sigma M_1$ in the sense that if $(u_{11},g)\in M_{11}$, then $\Sigma M_1\cap \omega(u_{11},g)\not =\emptyset$ and $\Sigma M_1\cap\alpha (u_{11},g)\not =\emptyset$.

\item[{ \rm (iii)}] There are two minimal sets $M_1,M_2\subset \Omega$ with $\Sigma M_1\cap \Sigma M_2=\emptyset$ such that
 $\Omega\subset \Sigma M_1\cup \Sigma M_2\cup M_{12}$, where $M_{12}\not =\emptyset$, and for any $(u_{12},g)\in M_{12}$, $\omega(u_{12},g)\cap (\Sigma M_1\cup\Sigma M_2)\not=\emptyset$ and  $\alpha(u_{12},g)\cap (\Sigma M_1\cup\Sigma M_2)\not=\emptyset$.
\end{itemize}
Furthermore, given any spatially-inhomogeneous  minimal set $M\subset \Omega$, there is a residual subset $H_0(f)\subset H(f)$ such that,
for any $g\in H_0(f)$, there exists $u_g\in X$ such that $p^{-1}(g)\cap M\subset (\Sigma u_g,g)$; and moreover, the $C^1$-function $c^g(\cdot)$ in Theorem \ref{norma-hyper} is well-defined for each $g\in H_0(f)$.

In particular, if $f$ in \eqref{equation-1} is uniformly almost-periodic in $t$, then the derivative  $\dot{c}^g(t)$ is almost-automorphic in $t$.
\end{theorem}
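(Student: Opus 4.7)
The plan is to prove the statement in two stages: first the trichotomy (i)--(iii), and second the residual-imbedding construction culminating in the almost-automorphic behaviour of $\dot c^g$.

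For the trichotomy, the crux is to bound the number of $\Sigma$-equivalence classes of minimal sets in $\Omega$ by two. Using Lemmas \ref{hyperbolic-minimal} and \ref{homogeneous-2d} to classify each minimal set $M\subset\Omega$, I would then invoke Lemma \ref{induced1-1cover}(ii) to pass to the quotient $\tilde X\times H(f)$, where any two distinct minimal sets $\tilde M,\tilde M'\subset\tilde\Omega$ have pairwise disjoint fibrewise max-value intervals $[m(g),M(g)]$ with a uniform positive gap persisting over $H(f)$. Suppose, for contradiction, that three pairwise $\Sigma$-disjoint minimal sets $M_1,M_2,M_3\subset \Omega$ exist. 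By Lemma \ref{induced1-1cover}(ii-b) they are totally ordered by max-value, so we may arrange $M_1(g)+\delta < m_2(g)$ and $M_2(g)+\delta < m_3(g)$ for all $g\in H(f)$. Picking representatives $(u_i,g)\in M_i$ and applying Lemma \ref{constant-ontwoset}, the pairwise zero numbers $N_{12},N_{23},N_{13}$ are constant in $t$ and in the chosen representatives. A careful zero-number count on $S^1$ that exploits the chain identity
\[
\varphi(t,\cdot;u_1,g)-\varphi(t,\cdot;u_3,g)=\bigl(\varphi(t,\cdot;u_1,g)-\varphi(t,\cdot;u_2,g)\bigr)+\bigl(\varphi(t,\cdot;u_2,g)-\varphi(t,\cdot;u_3,g)\bigr)
\]
together with the transversality of zeros from Corollary \ref{difference-lapnumber}(b) then yields the desired contradiction. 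With the bound of two $\Sigma$-classes secured, the trichotomy is obtained by standard invariance arguments: under (H0) and (H1) it reads off Theorems \ref{hyperbolic0}--\ref{norma-hyper}; under (H2), when $\Omega$ contains a single class $\Sigma M$, either $\Omega\subset\Sigma M$ (case (i)) or $M_{11}:=\Omega\setminus\Sigma M$ is non-empty and any $(u_{11},g)\in M_{11}$ has $\omega(u_{11},g)\subset\Omega$ containing a minimal set, necessarily in $\Sigma M$; since every point of an $\omega$-limit set admits a backward semi-orbit inside $\Omega$, the same reasoning yields $\alpha(u_{11},g)\cap\Sigma M\neq\emptyset$, giving case (ii); while if two $\Sigma$-classes $\Sigma M_1,\Sigma M_2$ occur, the connectedness of $\Omega$ forces $M_{12}:=\Omega\setminus(\Sigma M_1\cup\Sigma M_2)\neq\emptyset$ and the analogous argument with $\Sigma M_1\cup\Sigma M_2$ gives case (iii).

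For the residual imbedding, let $M\subset\Omega$ be spatially-inhomogeneous. By Lemma \ref{normhyper-minimal}(i) there is a residual invariant set $Y_0\subset H(f)$ such that $p^{-1}(g)\cap M\subset(\Sigma u_g,g)$ for every $g\in Y_0$; set $H_0(f):=Y_0$. Choosing a representative $u_g$ and using that $Y_0$ is invariant under the base flow, $\varphi(t,\cdot;u_g,g)\in p^{-1}(g\cdot t)\cap M\subset\Sigma u_{g\cdot t}$, which defines a phase $c^g(t)\in S^1$ by $\varphi(t,x;u_g,g)=u_{g\cdot t}(x+c^g(t))$. Smoothness of the parabolic semiflow combined with the $S^1$-spatial structure of $M$ yields the $C^1$-lift $c^g:\mathbb{R}\to\mathcal{S}^1=\mathbb{R}/L\mathbb{Z}$ exactly as in the proof of Theorem \ref{norma-hyper}(i).

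Finally, when $f$ is uniformly almost-periodic, $H(f)$ is an almost-periodic minimal flow, and the fibrewise assignment $g\mapsto[u_g]$ on the residual invariant $Y_0$ realizes $(\tilde M,\mathbb{R})$ as an almost-automorphic extension of $(H(f),\mathbb{R})$ (by the standard characterization of almost-automorphic extensions via a residual single-fibre section). Differentiating $\varphi(t,x;u_g,g)=u_{g\cdot t}(x+c^g(t))$ in $t$ at a point $x_0$ where $\partial_x u_{g\cdot t}(x_0+c^g(t))\neq 0$ expresses $\dot c^g(t)$ as a continuous function of $\tilde\Pi^t([u_g],g)$, so $\dot c^g$ inherits the almost-automorphy of the extension $(\tilde M,\mathbb{R})$. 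The principal obstacle is the exclusion of three pairwise $\Sigma$-disjoint minimal sets under (H2), where the zero-number combinatorics require delicate book-keeping; a secondary subtlety is translating the ``residual single-fibre section'' structure into Bochner almost-automorphy of the real-valued function $\dot c^g(t)$.
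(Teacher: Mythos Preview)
Your proposal has the right architecture—work in the quotient $\tilde X\times H(f)$, bound the number of $\tilde\Pi^t$-minimal subsets of $\tilde\Omega$ by two, then lift back—and the residual-imbedding and almost-automorphy parts are essentially what the paper does. But there is a genuine gap at the step you yourself flag as the principal obstacle: ruling out three pairwise $\Sigma$-disjoint minimal sets.

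Your proposed mechanism—the chain identity $u_1-u_3=(u_1-u_2)+(u_2-u_3)$ together with constancy of $N_{12},N_{23},N_{13}$ and transversality of zeros—does not by itself produce a contradiction. Three minimal sets with strictly ordered max-value intervals and constant pairwise zero numbers can coexist in an abstract compact invariant set; nothing in the zero-number combinatorics on $S^1$ forbids it (the zero number is neither additive nor subadditive under sums of functions). The paper's argument (Lemma \ref{gen-hyper2}) uses an ingredient you never invoke: that $\Omega=\omega(u_0,g_0)$ is the $\omega$-limit set of a \emph{single} forward orbit. Concretely, since $M_1\subset\Omega$, there is $t_n\to\infty$ with $\Pi^{t_n}(u_0,g_0)\to(u_1,g_0)\in M_1$; comparing the generating trajectory with a trajectory in $M_2$ via Lemma \ref{constant-ontwoset} and the simple-zeros property forces $m(\varphi(t,\cdot;u_0,g_0))<m(\varphi(t,\cdot;u_2,g_0))\le M_2(g_0\cdot t)$ for all large $t$. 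But $M_3\subset\Omega$ means the \emph{same} trajectory also approximates $M_3$, whose max-values lie uniformly above $M_2(\cdot)$ by Lemma \ref{induced1-1cover}(ii-b)—contradiction. Without bringing the generating orbit $(u_0,g_0)$ into play, your argument has no traction; you would be trying to bound the number of minimal sets in an arbitrary compact invariant set, which is false in general.

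A minor additional point: invoking Theorems \ref{hyperbolic0}--\ref{norma-hyper} to handle the {\bf (H0)}--{\bf (H1)} cases risks circularity in the paper's logical order, where Theorem \ref{structure-thm} is established first (via Lemma \ref{gen-hyper2}) and the other two are sharper refinements proved afterwards using it. The clean route is to prove the quotient-space trichotomy uniformly under {\bf (H0)}--{\bf (H2)} and then lift, as the paper does.
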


\begin{remark}
 {\rm Theorem \ref{structure-thm} gives a complete classification of all the possible structures of the $\omega$-limit set $\O$ under the assumption {\bf (H0)}, or {\bf (H1)}, or {\bf (H2)}. Note that assuming {\bf (H0)} (resp. {\bf (H1)}), Theorem \ref{hyperbolic0}
(resp. Theorem \ref{norma-hyper}) in fact implies  Theorem \ref{structure-thm}. But we will give a direct proof of  Theorem \ref{structure-thm}. By Remark A.1(i) in the appendix and Theorem \ref{structure-thm}, the structure of the $\omega$-limit set $\O$  under the assumption {\bf (H2)} can be more complicated; and moreover, residually imbedding and almost automorphically forced circle flow may occur.}
  \end{remark}

 \begin{remark}
{\rm
The above three main Theorems are generalizations from autonomous and time-periodic cases (\cite{Massatt1986,Matano,SF1}) to general systems with time-recurrent structure which includes almost periodicity and almost automorphy. It
also deserves to point out that an almost periodically (automorphically) forced circle
flow has interesting and fruitful dynamical behavior (see, e.g. \cite{HuYi,Yi} and the references therein). The new phenomena we
discovered here reinforce the appearance of the almost periodically (automorphically) forced circle
flow on the $\omega$-limit set $\O$ of
the infinite-dimensional dynamical systems generated by evolutionary equations.
}
\end{remark}

In the forthcoming three Subsections \ref{proof-Thm5.1}-\ref{Proof-Thm5.3}, we will first prove Theorems \ref{structure-thm} in Subsection-\ref{proof-Thm5.1}. Based on this, we will then prove Theorem \ref{norma-hyper} in Subsection-\ref{Proof-Thm5.2}. Finally, in Subsection-\ref{Proof-Thm5.3}, we will prove Theorem \ref{hyperbolic0}.

\subsection{Proof of Theorem \ref{structure-thm}}\label{proof-Thm5.1}
In this subsection, we will prove Theorem \ref{structure-thm}. For this purpose, we first present a lemma on the structure of $\omega$-limit sets of the skew-product semiflow $\tilde \Pi^{t}$ on the induced quotient space in Section \ref{S:SP-S-quotient}.

\begin{lemma}\label{gen-hyper2}
Assume that the $\omega$-limit set $\Omega$ satisfies one of
the hypotheses {\bf (H0)}-{\bf (H2)}. Let $\tilde\Omega$ be defined in \eqref{E:Induced-Omega}. Then $\tilde\Omega$ contains at most two minimal sets of $\tilde \Pi^{t}$; and moreover, one of the following  three alternatives must occur:
\begin{itemize}
\item[{ \rm (i)}] $\tilde \O$ is a minimal invariant set of $\tilde \Pi^{t}$;
\item[{ \rm (ii)}] $\tilde \O=\tilde M_1\cup \tilde M_{11}$, where $\tilde M_1$ is minimal, $\tilde M_{11}\neq \emptyset$, $\tilde M_{11}$ connects $\tilde M_1$ in the sense that if $([u_{11}],g)\in \tilde M_{11}$, then $\omega([u_{11}],g)\cap \tilde M_1\neq \emptyset$, and $\alpha ([u_{11}],g)\cap \tilde M_1\neq \emptyset$;
\item[{ \rm (iii)}] $\tilde \O=\tilde M_1\cup \tilde M_2\cup \tilde M_{12}$, where $\tilde M_1$, $\tilde M_2$ are minimal sets, $\tilde M_{12}\neq \emptyset$ and connects $\tilde M_1$, $\tilde M_2$ in the sense that if $([u_{12}],g)\in \tilde M_{12}$, then $\omega([u_{12}],g)\cap(\tilde M_1\cup \tilde M_2)\neq\emptyset$ and $\alpha([u_{12}],g)\cap(\tilde M_1\cup \tilde M_2)\neq\emptyset$.
\end{itemize}
 \end{lemma}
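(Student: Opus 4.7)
The plan is to prove the lemma by first decomposing $\tilde\Omega$ according to its minimal subsets and then showing that the number of such minimal subsets is at most two. Throughout I would work in the quotient space $\tilde X\times H(f)$, exploiting that the spatial-shift $S^1$-action has been modded out, so that one ``center direction'' of $\Omega$ associated with translation disappears in $\tilde\Omega$. Concretely, under (H0) the induced flow on $\tilde\Omega$ is effectively hyperbolic, under (H1) with $\dim V^u(\Omega)>0$ the one-dimensional center of $\Omega$ is generated by $\partial_x$ of a spatially-inhomogeneous profile and therefore collapses in $\tilde X$, while under (H2) an entirely analogous ``one direction drops'' phenomenon holds. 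This viewpoint will be the backbone of the whole argument.

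First I would show the trichotomy modulo the bound on minimal sets. By Zorn's lemma $\tilde\Omega$ contains at least one minimal set $\tilde M_1$; Corollary \ref{induced-property3} says it comes from a minimal set $M_1\subset\Omega$. If $\tilde\Omega=\tilde M_1$ we are in case (i). Otherwise pick $([u_{11}],g)\in \tilde\Omega\setminus\tilde M_1$. Because $\tilde\Omega$ is the $\omega$-limit set of $([u_0],g_0)$ by Lemma \ref{induced-property2}, every point of $\tilde\Omega$ admits a backward orbit in $\tilde\Omega$, so both $\omega([u_{11}],g)$ and $\alpha([u_{11}],g)$ are nonempty compact invariant subsets of $\tilde\Omega$ and hence contain minimal sets. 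If the only minimal set in $\tilde\Omega$ is $\tilde M_1$ then both limit sets must meet $\tilde M_1$, giving case (ii) with $\tilde M_{11}=\tilde\Omega\setminus\tilde M_1$. Otherwise there is a second minimal set $\tilde M_2\neq\tilde M_1$; by Corollary \ref{induced-property3} again $\tilde M_2$ comes from $M_2\subset\Omega$ with $\Sigma M_1\cap\Sigma M_2=\emptyset$ (otherwise $\tilde M_1=\tilde M_2$). Repeating the $\omega/\alpha$-limit argument for any $([u_{12}],g)\in\tilde\Omega\setminus(\tilde M_1\cup\tilde M_2)$ yields case (iii), provided we know no third minimal set can exist.

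The main obstacle, and the step I would spend the most effort on, is bounding the number of minimal sets by two. I would use Lemma \ref{induced1-1cover}(ii) to obtain a strict, uniform linear ordering on the collection of minimal sets in $\tilde\Omega$: for any two distinct minimal sets $\tilde M_i,\tilde M_j$, the intervals $[m_i(g),M_i(g)]$ and $[m_j(g),M_j(g)]$ are disjoint and their order is the same for every $g\in H(f)$, with a uniform gap. Suppose for contradiction that there is a third minimal set $\tilde M_3$, so that in this ordering we may assume $\tilde M_1<\tilde M_2<\tilde M_3$ with uniform separation. Using Lemma \ref{constant-ontwoset} each pair $(M_i,M_j)$ has an associated constant zero number $N_{ij}$ on differences of trajectories; combined with Corollary \ref{difference-lapnumber} (non-increase of $z$) and the ordering, I would show that the three zero numbers $N_{12},N_{23},N_{13}$ must simultaneously satisfy incompatible relations. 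The argument is essentially the one used to rule out two-sided proximal pairs in Lemma \ref{hyperbolic2} and Lemma \ref{inducedproxi}: the center (and center-unstable/center-stable) manifold dimensions controlled by (H0)--(H2) force $N_{ij}$ into a very narrow range (governed by $N_u$ as in \eqref{E:Nu-def}), but the simultaneous ordering of the three minimal sets would force a strict inequality $N_{13}<\min(N_{12},N_{23})$ or a strict inequality in the other direction, contradicting the fixed parity and dimension bounds coming from Lemma \ref{L:zero-inva} and Lemma \ref{zerocenter}. Hence the third minimal set cannot exist.

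Finally, for the ``connecting'' part of (ii) and (iii) I would verify that every point in $\tilde M_{11}$ (resp.\ $\tilde M_{12}$) has both its $\omega$- and $\alpha$-limit set in the union of the minimal sets already identified; this is automatic once we know there are no other minimal sets in $\tilde\Omega$, because any compact invariant set contains a minimal set. The hardest technical ingredient, as explained, is the simultaneous zero-number/ordering contradiction used to preclude a third minimal set; everything else is a clean assembly of the preliminary lemmas from Sections 2--4.
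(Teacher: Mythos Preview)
Your trichotomy argument (the ``assembly'' part) is fine and matches the paper's.  The gap is in your core step, the exclusion of a third minimal set.  You propose to compare the three pairwise zero numbers $N_{12},N_{23},N_{13}$ from Lemma~\ref{constant-ontwoset} and to derive, via Lemmas~\ref{L:zero-inva} and~\ref{zerocenter}, an inequality like $N_{13}<\min(N_{12},N_{23})$ contradicting the $N_u$-constraints.  But those lemmas bound zero numbers of differences \emph{within a local invariant manifold at a single point}; they say nothing about the zero number between two \emph{distinct} minimal sets $M_i,M_j$ with $\Sigma M_i\cap\Sigma M_j=\emptyset$.  There is no mechanism by which the $m(\cdot)$-ordering of the $\tilde M_i$ forces a monotone or parity relation among the $N_{ij}$, and invoking the proximal-pair arguments of Lemmas~\ref{hyperbolic2} and~\ref{inducedproxi} does not help here, since those concern a point and a nearby minimal set, not two separated minimal sets.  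As stated, the contradiction you assert is unmotivated.

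The paper's argument is both simpler and uses an ingredient you never touch: the fact that $\Omega=\omega(u_0,g_0)$ is the $\omega$-limit set of a \emph{single} orbit.  One orders the three minimal sets as $M_1<M_2<M_3$ via Lemma~\ref{induced1-1cover}(ii-b), and then compares the original orbit $\varphi(t,\cdot;u_0,g_0)$ with a chosen orbit $\varphi(t,\cdot;u_2,g_0)$ in the \emph{middle} set $M_2$.  Since the original orbit accumulates on $M_1$, Lemma~\ref{constant-ontwoset} plus Corollary~\ref{difference-lapnumber} force $z(\varphi(t,\cdot;u_0,g_0)-\varphi(t,\cdot;\sigma_a u_2,g_0))$ to be eventually constant in $a$ and $t$; simple zeros then give $m(\varphi(t,\cdot;u_0,g_0))<m(\varphi(t,\cdot;u_2,g_0))$ for all large $t$.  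But the same orbit must also accumulate on $M_3$, which lies strictly above $M_2$, giving the contradiction $m_3(g^*)\le M_2(g^*)$.  Note that only one zero-number constant (between the orbit and $M_2$) is used, not three, and no invariant-manifold dimension count enters at all.
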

\begin{proof}
Suppose that $\tilde\O$ contains three minimal sets $\tilde M_i$($i=1,2,3$) of $\tilde \Pi^{t}$. Then, by Corollary \ref{induced-property3}, one can find three  minimal sets $M_i\subset \O$($i=1,2,3$) such that $\tilde M_i=\{([u],g)|(u,g)\in M_i\}$ for $i=1,2,3$, respectively.

For each $g\in H(f)$ and $i=1,2,3$,  we define $m_i(g)$ and $M_i(g)$ as in \eqref{E:min-max}.
By virtue of Lemma \ref{induced1-1cover}(ii-b), we may assume without loss of generality that there is a $\delta>0$ such that
\begin{equation}\label{inequality}
  M_1(g)+\delta\leq m_2(g)\le M_2(g)\le m_3(g)-\delta,
\end{equation}
for all $g\in H(f)$.

Choose $(u_i,g_0)\in M_i\cap p^{-1}(g_0)\subset \O$, $i=1,2,3$. Recall that $\O:=\omega(u_0,g_0)$. Then there exists a sequence $t_n\to \infty$ such that $\Pi^{t_n}(u_0,g_0)\to(u_1,g_0)\in M_1$. Due to the compactness of $M_2$, one may also assume that
$\Pi^{t_n}(u_2,g_0)\to (\tilde u_2,g_0)$ for some $(\tilde u_2,g_0)\in M_2$. So,
Lemma \ref{constant-ontwoset} implies that there is $N_0\in \mathbb{N}$ such that $z(u_1-\sigma_a \tilde u_2)=N_0$ for all $a\in S^1$. Thus, by Corollary \ref{difference-lapnumber}(c) and compactness of $S^1$, there is a $T>0$ such that $z(\varphi(t,\cdot;u_0,g_0)-\varphi(t,\cdot;\sigma_au_2,g_0))\equiv N_0$, for all $a\in S^1$ and $t\ge T$. By Corollary \ref{difference-lapnumber}(b) and \eqref{inequality}, we obtain that $m(\varphi(t,\cdot;u_0,g_0))<m(\varphi(t,\cdot;u_2,g_0))$ for all $t\ge T$. Since $M_3\subset \omega(u_0,g_0)$, there exist some sequence  $t_n^\prime\to \infty$ and $g^*\in H(f)$ such that $m(\varphi(t_n^\prime,\cdot;u_0,g_0))\to m_3(g^*)$ as $n\to \infty$. Without loss of generality we may also assume that $m(\varphi(t_n^\prime,\cdot;u_2,g_0))\to \beta(g^*)$ with $\beta(g^*)\in [m_2(g^*),M_2(g^*)]$. As a consequence,
\[
m_3(g^*)\leq \beta(g^*)\le M_2(g^*),
\]
contradicting \eqref{inequality}. Thus, $\tilde\O$ contains at most two minimal sets.

Now, we can write $\tilde\O=\tilde M_1\cup\tilde M_2\cup\tilde M_{12}$, where $\tilde M_1$, $\tilde M_2$ are minimal sets of $\tilde \Pi^{t}$. When $\tilde M_1\neq \tilde M_2$, since $\tilde\O$ is connected, $\tilde M_{12}\neq \emptyset$. Choose some $([u_{12}],g)\in\tilde M_{12}$, then $\omega([u_{12}],g)\cap (\tilde M_1\cup \tilde M_2)$ and $\alpha([u_{12}],g)\cap (\tilde M_1\cup \tilde M_2)$ are nonempty. For otherwise, either $\omega([u_{12}],g)$ or $\alpha([u_{12}],g)$ will contain a new minimal set of $\tilde \Pi^{t}$; and hence, $\tilde\O$ will possess three minimal sets of $\tilde \Pi^{t}$, a contradiction. Thus, (iii) holds.  When $\tilde M_1=\tilde M_2$ (i.e., $\tilde \O$ contains a unique minimal set), then $\tilde M_{12}=\emptyset$ will imply (i); and if $\tilde M_{12}\neq \emptyset$, then a similar argument shows that $\omega([u_{12}],g)\cap\tilde M_1\neq \emptyset$, $\alpha([u_{12}],g)\cap \tilde M_1\neq \emptyset$ for any $([u_{12}],g)\in \tilde M_{12}$. The proof of this lemma is completed.
\end{proof}

We are ready to prove Theorem \ref{structure-thm}.

\begin{proof}[Proof of Theorem \ref{structure-thm}]
Recall that $\tilde \O=\{([u],g)|(u,g)\in \Omega\}$. When Lemma \ref{gen-hyper2}(i) holds, one has $\tilde \O=\tilde M$; and hence, Corollary \ref{induced-property3} implies that there is a minimal set $M\subset \O$ such that $\tilde M=\{([u],g)|(u,g)\in M\}$. Suppose that there is $(u_*,g)\in \O$, but $(u_*,g)\notin \Sigma M$. Then $u_*\neq \sigma_a u$ for any $a\in S^1$ and $(u,g)\in M$, which means that $([u_*],g)\notin \tilde M$, a contradiction to $([u_*],g)\in \tilde \O=\tilde M$. Thus, $\O\subset \Sigma M$.

When Lemma \ref{gen-hyper2}(ii) holds, that is, $\tilde \O=\tilde M_1\cup \tilde M_{11}$, where $\tilde M_1$ is a minimal set of $\tilde \Pi^{t}$, $\tilde M_{11}\neq \emptyset$. By Corollary \ref{induced-property3} again, one can choose a minimal set $M_1\subset \O$ such that $\tilde M_1=\{([u],g)|(u,g)\in M_1\}$. Let $M_{11}=\O\setminus \Sigma M_1$. Then it is easy to see that $\tilde M_{11}=\{([u],g)|(u,g)\in M_{11}\}$; and moreover, there is no minimal set in $M_{11}$.
So, we can assert that both $\Sigma M_1\cap \omega(u_{11},g)\ne\emptyset$ and $\Sigma M_1\cap \alpha(u_{11},g)\ne\emptyset$. In fact, suppose for instance that $\Sigma M_1\cap \omega(u_{11},g)=\emptyset$. Then one can find a minimal set $M_2\subset \omega(u_{11},g)$. So, $M_2\cap \Sigma M_1=\emptyset$; and hence, $\Sigma M_2\cap \Sigma M_1=\emptyset$. Let $\tilde M_2=\{([u],g)|(u,g)\in M_2\}$, then $\tilde M_2\neq \tilde M_1$ is also a minimal set of $\tilde\Pi^t$ contained in $\tilde \Omega$, a contradiction. Thus, we have proved (ii). Similarly, we can also prove (iii) as long as Lemma \ref{gen-hyper2}(iii) holds.

Now let $M\subset \Omega$ be any spatially-inhomogeneous minimal set. Since { one of } {\bf(H0)}-{\bf(H2)} holds,
Remark \ref{invari-space} entails that $\dim V^c(M)\le 2$. Since $M$ is spatially-inhomogeneous, Lemma \ref{hyperbolic-minimal}(1) implies that $\dim V^c(M)>0$; and moreover, Lemma \ref{hyperbolic-minimal}(2) further implies that if $\dim V^c(M)=1$ then we must have $\dim V^u(M)>0$. Thus, we have obtained that either $\dim V^c(M)=1$ with $\dim V^u(M)>0$,  or $\dim V^c(M)=2$ with $\dim V^u(M)$ being odd. As a consequence, it follows from Lemma \ref{normhyper-minimal}(i)-(ii) that there exists at least a residual subset $H_0(f)\subset H(f)$ such that
for any $g\in H_0(f)$, there exists $u_g\in X$ such that $M\cap p^{-1}(g)\subset (\Sigma u_g,g)$.

Finally, we will show the existence of $c^g(t)$ which satisfies \eqref{E:traveling-func}.
The following argument is essentially adapted from \cite{SWZ}. For completeness we give more detail here. By Lemma \ref{induced1-1cover}(i), we obtain the induced minimal set $\tilde{M}$, which is an almost $1$-cover of $H(f)$. Define the mapping
\begin{equation}\label{E:natu-proj-h}
h:\tilde{M}\to \mathbb{R}^1\times H(f); ([u],g)\mapsto (m(u),g).
\end{equation}
Let $\hat{M}=h(\tilde{M})$. Clearly, $h$ is well-defined and continuous onto $\hat{M}$. Moreover, $h$ is injective due to Lemma \ref{normhyper-minimal}(iv). Recall that $\tilde{M}$ and $\hat{M}$ are both compact, $h$ is also a closed mapping. Hence $h$ is a homeomorphism from $\tilde{M}$ onto $\hat{M}$. On such $\hat{M}\subset \mathbb{R}^1\times H(f)$, one can naturally define the skew-product flow
\begin{equation}\label{E:induced-flow-hat-M}
\hat{\Pi}^t:\hat{M}\to \hat{M};(m(u),g)\mapsto (m(\varphi(t,\cdot,u,g)),g\cdot t),
\end{equation}
which is induced by $\Pi^t$ restricted to $M$. So, a straightforward check yields that
$$h\circ\tilde{\Pi}^t([u],g)=\hat{\Pi}^t\circ h([u],g)\,\,\,
\text{ for any }([u],g)\in \tilde{M}.$$ This entails that $h$ is a topologically-conjugate homeomorphism between $\tilde{M}\to \hat{M}\subset \mathbb{R}^1\times H(f)$. Hence, $\hat{M}$ is an almost $1$-cover, since $\tilde{M}$ is an almost $1$-cover (with the residual subset $H_0(f)\subset H(f)$).

For each $g\in H_0(f)$, we choose some element, still denoted by $u_g(\cdot)$, from the $S^1$-group orbit $\Sigma u_g$ such that
\begin{equation}\label{E:u-g--base-trn1}
u_g(0)=m(u_g),\,\,\,\,\,\,\,\hat{M}\cap p^{-1}(g)=(m(u_g),g)\,\, \text{and }\tilde{M}\cap \tilde{p}^{-1}(g)=([u_g],g).
\end{equation}
Then it follows from \eqref{E:induced-flow-hat-M} and \eqref{E:u-g--base-trn1} that $$u_{g\cdot t}(0)=m(u_{g\cdot t})=m(\varphi(t,\cdot,u_g,g))\,\,\,\text{ for any }g\in H_0(f)\,\,\text{ and }t\in \mathbb{R}.$$
As a consequence, for each $g\in H_0(f)$, the function $t\mapsto u_{g\cdot t}(0)$  is clearly continuous and time-recurrent in $t$ (almost automorphic in $t$, if $f$ is uniformly almost periodic in $t$) due to the fact that $\hat{M}$ is an almost $1$-cover; and moreover, $u_{g\cdot t}(x)$ is time-recurrent (almost automorphic) in $t$ uniformly in $x$.

%
%
%({\bf Explanation: the continuity of $t\mapsto m^g(t)=u_{g\cdot t}(0)=m(u_{g\cdot t})$}: Let $t=t_0+\Delta t$. Then $m(\varphi(\Delta t,\cdot;u_{g\cdot t_0},g))\to m(u_{g\cdot t_0})$ as $\Delta t\to 0$. Since $\varphi(\Delta t,\cdot;u_{g\cdot t_0},g)\in \Sigma u_{g\cdot t}$, one has $m(u_{g\cdot t})=m(\varphi(\Delta t,\cdot;u_{g\cdot t_0},g))\to m(u_{g\cdot t_0})$ as $\Delta t\to 0$. {\bf Similarly, one can prove $g\mapsto m(u_g)=u_g(0)$ is continuous.})

Due to the spatial-inhomogeneity of $M$,  it follows that $\varphi_x(t,\cdot;u_g,g)\in V^c(\Pi^t(u_g,g))$ for any $t$. Recall that $M$ satisfies
  either $\dim V^c(M)=1$ with $\dim V^u(M)>0$,  or $\dim V^c(M)=2$ with $\dim V^u(M)$ being odd.
  Then Lemma \ref{L:zero-inva} implies that $\varphi_x(t,\cdot,u_g,g)$ only has simple zeros for any $t\in \mathbb{R}$. In particular, by letting $t=0$, one has $u_g^{'}(\cdot)$ only has simple zeros. Together with $u_g^\prime(0)=0$ (because $u_g(0)=m(u_g)$), this then implies that
  \begin{equation}\label{E:circle-flow-eq11}
u^{''}_g(0)\not =0\quad \text{ for any }g\in H_0(f).
\end{equation}

Now, define a nonnegative function $t\mapsto c^g(t)\ge 0$ (with $g\in H_0(f)$) such that
\begin{equation}\label{E:rotation-spiral1}
 \varphi(t,x;u_g,g)= u_{g\cdot t}(x+ c^g(t)),\quad \text{or equivalently,}\quad \varphi(t,x-c^g(t);u_g,g)=u_{g\cdot t}(x).
\end{equation}
Let $L\in (0,2\pi]$ be the smallest common spatial-period of the elements in the minimal set $M$ and $\mathcal{S}^1:=\mathbb{R}/L\mathbb{Z}$. Then for each $t\in \mathbb{R}$, one can further choose $c^g(t)\in \mathcal{S}^1$ so that $c^g(t)$ is continuous in $t$. Indeed, suppose that there is a sequence $t_n\to t_0$ such that $\abs{c^g(t_0)-c^g(t_n)}\ge \epsilon_0>0$ in $\mathcal{S}^1$. For the sake of simplicity, we assume $c^g(t_n)\to c^*$ with $c^*\in \mathcal{S}^1$. So, $c^g(t_0)\ne c^*$ in $\mathcal{S}^1$. On the other hand, by \eqref{E:rotation-spiral1}, one has
$u_{g\cdot t_0}(x+ c^g(t_0))=\varphi(t_0,x;u_g,g)=\lim_{n\to\infty}\varphi(t_n,x;u_g,g)=\lim_{n\to\infty}u_{g\cdot t_n}(x+ c^g(t_n))=u_{g\cdot t_0}(x+ c^*)$. This contradicts $c^g(t_0)\ne c^*$ with $c^g(t_0),c^*\in \mathcal{S}^1$, because $L$ is the minimal spatial-period.
So, the function $t\mapsto c^g(t)\in \mathcal{S}^1$ is continuous.

By \eqref{E:rotation-spiral1} and the property of $u_{g\cdot t}(x)$ in \eqref{E:circle-flow-eq11}, we observe that
$$
\varphi_x(t,-c^g(t);u_g,g)=u^{'}_{g\cdot t}(0)=0\,\,
\text{ and }\,\,\varphi_{xx}(t,-c^g(t);u_g,g)= u^{''}_{g\cdot t}(0)\not =0.
$$
Then by the continuity of $c^g(t)$ in $t$ and Implicit Function Theorem, we have $c^g(t)$ is differentiable in $t$; and moreover, we have
\begin{eqnarray}\label{circle-flow-eq41}
\dot {c}^g(t)=G(t,c^g(t)),
 \end{eqnarray}
 where
\begin{equation*}
\begin{split}
G(t,z)=&g_p(t,\varphi(t,-z;u_g,g),\varphi_x(t,-z;u_g,g))\\
&+\dfrac{\varphi_{xxx}(t,-z;u_g,g)+g_u(t,\varphi(t,-z;u_g,g),\varphi_x(t,-z;u_g,g)\varphi_x(t,-z;u_g,g)}{\varphi_{xx}(t,-z;u_g,g)},\,\,\,\text{ for }z\in \mathcal{S}^1.
\end{split}
\end{equation*}
It is easy to see that $G(t,z+L)=G(t,z)$ and the function $G(t,c^g(t))=g_p(t,u_{g\cdot t}(0),0)+\frac{u^{'''}_{g\cdot t}(0)}{u^{''}_{g\cdot t}(0))}$, and hence $\dot{c}^g(t)$, is time-recurrent (resp. time almost-automorphic in $t$ if $f$ is uniformly almost periodic in $t$). Thus, we have obtained that \eqref{E:rotation-spiral1} and \eqref{circle-flow-eq41}, which naturally induces
a time-recurrently (resp. almost automorphically) forced skew-product flow on $\mathcal{S}^1\times H(f)$.
The proof of this theorem is completed.
\end{proof}

\subsection{Proof of Theorem \ref{norma-hyper}}\label{Proof-Thm5.2}

In this subsection, we will prove Theorem \ref{norma-hyper}. Since the proof of Theorem \ref{norma-hyper}(ii) is similar to \cite[Theorem 5.1 (ii)]{SWZ2}, in the rest of this section we only prove Theorem \ref{norma-hyper}(i).

\begin{proof}[Proof of Theorem {\rm \ref{norma-hyper}(i)}]
Since {\bf (H1)} holds and $\dim V^u(\Omega)>0$, Lemma \ref{induced1-1cover}(i) implies that any minimal set $\tilde M$ of $\tilde{\Pi}^t$ is a $1$-cover of $H(f)$. In the following, we will show that $\tilde{\Omega}=\tilde M$ for some minimal set $\tilde M$ of $\tilde{\Pi}^t$; that is, there is a minimal set $M\subset \O$ of $\Pi^t$ such that $\O\subset \Sigma M$.
 To this end,
it suffices to show that cases (ii)-(iii) in Lemma \ref{gen-hyper2} can not occur.

Suppose that case (ii) in Lemma \ref{gen-hyper2} occurs. Then $\tilde \O=\tilde M_1\cup \tilde M_{11}$ where $\tilde M_1$ is minimal and $\tilde M_{11}\neq \emptyset$. So, Lemma \ref{gen-hyper2}(ii) implies that $\{([u_1],g),([u_{11}],g)\}$ is a two sided proximal pair for any $([u_1],g)\in \tilde M_1$ and $([u_2],g)\in \tilde M_{11}$. This contradicts to Lemma \ref{inducedproxi}. So, the case (ii) in Lemma \ref{gen-hyper2} can not happen.

Suppose that case (iii) in Lemma \ref{gen-hyper2} occurs.
Then $\tilde\O=\tilde M_1\cup\tilde M_2\cup\tilde M_{12}$, where $\tilde M_1$ and $\tilde M_2$ are minimal sets and $\tilde M_{12}\neq \emptyset$. By Corollary \ref{induced-property3}, there are two minimal sets $M_i\subset\O$($i=1,2$) with $\Sigma M_1\cap \Sigma M_2=\emptyset$, such that $\tilde M_i=\{([u_i],g)|(u_i,g)\in M_i\}$ for $i=1,2$. Since $\tilde M_i$($i=1,2$) are $1$-cover of $H(f)$, we may assume without loss of generality that
\begin{equation}\label{E:back-for-appro-11}
\tilde \Pi^t([u_1],g)-\tilde \Pi^t([u_{12}],g)\to 0\, \text{ and }\tilde \Pi^{-t}([u_2],g)-\tilde \Pi^{-t}([u_{12}],g)\to 0,\, \text{ as }t\to \infty,
\end{equation}
for any $([u_i],g)\in\tilde M_i$ ($i=1,2$) and $([u_{12}],g)\in\tilde M_{12}$. Moreover, $\Omega\subset\Sigma M_1\cup \Sigma M_2\cup M_{12}$, where $M_{12}\subset\Omega$ ($M_{12}$ contains no minimal set of $\Omega$) such that $\tilde{M}_{12}=\{([u],g)|(u,g)\in M_{12}\}$. We will discuss the following three alternatives separately:

(i) Both $M_1$ and $M_2$ are spatially-homogeneous;

(ii) Both $M_1$ and $M_2$ are spatially-inhomogeneous;

(iii) One is spatially-homogeneous, the other is spatially-inhomogeneous.

\noindent For each case, we will deduce certain contradiction (see the forthcoming three sub-lemmas) . This then makes that the case (iii) in Lemma \ref{gen-hyper2} can not happen.
\vskip 3mm

\noindent {\it Sub-Lemma 1: Alternative {\bf (i)} cannot occur.}
\begin{proof}\vskip -2mm
Suppose that both $M_1$ and $M_2$ are spatially-homogeneous. Then Lemma \ref{homogeneous} implies that $M_i$ ($i=1,2$) are hyperbolic and satisfying Lemma \ref{homogeneous}(b).

Let $(u_{12},g)\in M_{12}$ and $(u_i,g)\in p^{-1}(g)\cap M_i$ ($i=1,2$). Then Lemma \ref{proximalcons} entails that neither  $\{(u_1,g),(u_{12},g)\}$ nor $\{(u_1,g),(u_{12},g)\}$ is a two sided proximal pair. Thus, together with \eqref{E:back-for-appro-11}, we can assume

\begin{equation}\label{positive-ays}
 \Pi^{t}(u_{12},g)-\Pi^{t}(u_{1},g)\to 0 \ \text{as} \ t\to\infty
\end{equation}
and
\begin{equation*}\label{negative-ays}
 \Pi^{t}(u_{12},g)-\Pi^{t}(u_{2},g)\to 0 \ \text{as} \ t\to -\infty.
\end{equation*}

Since both $M_1$ and $M_2$ are spatially-homogenous, $z(\varphi(t,\cdot;u_1,g)-\varphi(t,\cdot;u_2,g))=0$ for all $t\in \mathbb{R}$. Let $t_n\to -\infty$ be such that $\Pi^{t_n}(u_{12},g)\to (u_2,g)$ and $\Pi^{t_n}(u_{1},g)\to(u_1,g)$, as $n\to\infty$. Then, by Lemma \ref{zero-cons-local}, there is $N\in\mathbb{N}$ such that,
 \begin{equation}\label{equa-limit}
   z(u_1-u_2)=z(\varphi(t_n,\cdot;u_{12},g)-\varphi(t_n,\cdot;u_1,g))=0
 \end{equation}
for any $n>N$.

We will consider the two cases that $\dim V^u(\Omega)$ is odd and $\dim V^u(\Omega)$ is even separately. When $\dim V^u(\Omega)$ is odd, by virtue of Lemma \ref{homogeneous}(b), one can choose $\delta^*>0$ so small that
\begin{equation*}\label{E:cs=unstable}
M^{u}(\omega,\delta^*)=\tilde M^u(\omega,\delta^*)\,\text{ and }\,M^{cs}(\omega,\delta^*)=\tilde M^{s}(\omega,\delta^*),
\end{equation*}
where $\tilde M^u,\tilde M^s$ denote respectively the local unstable and stable manifolds of $\omega\in M$ with respect to the Sacker-Sell spectrum $\sigma(M)$ (see more discussion in \cite[(5.10) and Remark 5.1]{SWZ2}). So, by virtue of \eqref{positive-ays} and Remark \ref{stable-leaf}(1), we have $\varphi(t,\cdot;u_{12},g)\in \tilde M^{s}(\Pi^t(u_{1},g),\delta^*)=M^{cs}(\Pi^t(u_{1},g),\delta^*)$ for $t\gg 1$. Recall that $N_u=\dim V^u(\O)+1$. Then Lemma \ref{zerocenter}(2) entails that
 \begin{equation*}
   z(\varphi(t,\cdot;u_{12},g)-\varphi(t,\cdot;u_1,g))\geq N_u\quad t\gg 1;
 \end{equation*}
and hence, { Corollary \ref{difference-lapnumber}(a) implies that}
\begin{equation*}
   z(\varphi(t,\cdot;u_{12},g)-\varphi(t,\cdot;u_1,g))\geq N_u \quad t\in \mathbb{R}^1,
 \end{equation*}
a contradiction to \eqref{equa-limit}.

When $\dim V^u(\Omega)$ is even, also by Lemma \ref{homogeneous}(b), one can choose $\delta^*>0$ so small that
\begin{equation}\label{E:cu=unstable}
M^{cu}(\omega,\delta^*)=\tilde M^u(\omega,\delta^*)\,\text{ and }\,M^{s}(\omega,\delta^*)=\tilde M^{s}(\omega,\delta^*).
\end{equation}
(see more discussion in \cite[(5.12) and Remark 5.1]{SWZ2}). Thus, by virtue of \eqref{positive-ays} and Remark \ref{stable-leaf}(1), we have $\varphi(t,\cdot;u_{12},g)\in \tilde M^{s}(\Pi^t(u_{1},g),\delta^*)=M^{s}(\Pi^t(u_{1},g),\delta^*)$ for $t\gg 1$. Note that $N_u=\dim V^u(\O)$. By Lemma \ref{zerocenter}(3), one has
\begin{equation*}
   z(\varphi(t,\cdot;u_{12},g)-\varphi(t,\cdot;u_1,g))\geq N_u+2\quad t\gg 1;
 \end{equation*}
and hence,  Corollary \ref{difference-lapnumber}(a) implies that
\begin{equation*}
   z(\varphi(t,\cdot;u_{12},g)-\varphi(t,\cdot;u_1,g))\geq N_u+2 \quad t\in \mathbb{R}^1,
 \end{equation*}
which is also a contradiction to \eqref{equa-limit}.

This implies that Alternative {\bf (i)} cannot occur.
\end{proof}

\noindent {\it Sub-Lemma 2: Alternative {\bf (ii)} cannot occur.}
\begin{proof}\vskip -2mm
By virtue of \eqref{E:back-for-appro-11}, one can find $ a^*_{12},a^{**}_{12}\in S^1$, $(u^*,g^*)\in M_1$, $(u^{**},g^{**})\in M_2$, and two subsequences $t_n\to\infty$,  $s_n\to-\infty$ such that
\begin{equation}\label{bnormalposi-proxi1}
\begin{split}
 \Pi^{t_n}(u_1,g)\to(u^*,g^*) \,\,\text{ and }\,\,
\Pi^{t_n}(\sigma_{a^{*}_{12}}u_{12},g)\to(u^*,g^*)
\end{split}
\end{equation}
and
\begin{equation}\label{bnormalnega-proxi1}
\begin{split}
 \Pi^{s_n}(u_2,g)\to(u^{**},g^{**}) \,\,\text{ and }\,\,
\Pi^{s_n}(\sigma_{a^{**}_{12}}u_{12},g)\to(u^{**},g^{**}).
\end{split}
\end{equation}
So, similarly as \eqref{geq-estimate} and \eqref{leq-estimate2}, one can obtain that
\begin{equation}\label{b3-large-estimate}
  z(\varphi(t,\cdot;u_1,g)-\varphi(t,\cdot;\sigma_au_{12},g))\geq N_u
\end{equation}
and
\begin{equation}\label{b3-small-estimate}
  z(\varphi(t,\cdot;u_2,g)-\varphi(t,\cdot;\sigma_bu_{12},g))\leq N_u
\end{equation}
for all $t\in \mathbb{R},a,b\in S^1$.
Moreover, by Lemma \ref{constant-ontwoset}, there exists some $N\in\mathbb{N}$ such that
\begin{equation*}
  z(\varphi(t,\cdot;\sigma_bu_1,g)-\varphi(t,\cdot;\sigma_au_2,g))= N
\end{equation*}
for all $t\in \mathbb{R},a,b\in S^1$. We will show that $N=N_u$. In fact, since $M_1$ is compact, there exist $\{s_{n_k}\}\subset \{s_n\}$ and $(u^{**}_1,g^{**})\in M_1$ such that $\Pi^{s_{n_k}}(u_1,g)\to (u^{**}_1,g^{**})$. Then it follows from \eqref{b3-large-estimate} that $z(u^{**}_1-u^{**})\geq N_u$. Similarly, by using \eqref{b3-small-estimate}, one has $z(u^{*}_2-u^{*})\leq N_u$, for some $(u^{*}_2,g^*)\in M_2$. Again by Lemma \ref{constant-ontwoset}, we have $z(u^{*}_2-u^{*})=z(u^{**}_1-u^{**})$. Therefore, $N=N_u$. Moreover, we have
\begin{equation}\label{constant-A}
  z(\varphi(t,\cdot;u_1,g)-\varphi(t,\cdot;\sigma_au_{12},g))= N_u
\end{equation}
and
\begin{equation}\label{constant-B}
  z(\varphi(t,\cdot;u_2,g)-\varphi(t,\cdot;\sigma_bu_{12},g))= N_u
\end{equation}
for all $t\in \mathbb{R}$ and $a,b\in S^1$.
\vskip 2mm

When $\dim V^u(\O)$ is odd (resp. $\dim V^u(\O)$ is even), it follows from \eqref{constant-B} (resp. \eqref{constant-A}), Lemma \ref{zero-cons-local} and the compactness of $S^1$ that there exists $\delta>0$ (independent of $a\in S^1$) such that
\begin{equation}\label{case-b2-const-1}
z(\sigma_a u_{12}-u_2+v)=N_u \quad(\mathrm{resp}.\ z(\sigma_a u_{12}-u_1+v)=N_u)
\end{equation}
for any $a\in S^1$ and $\|v\|<\delta$. According to \eqref{bnormalnega-proxi1} (resp. \eqref{bnormalposi-proxi1}) and Lemma \ref{hyperbolic1}, there exists some $v_n\in M^s(\varphi(s_n,\cdot;\sigma_{a^{**}_{12}}u_{12},g),g\cdot s_n,\delta^*)\cap M^{cu}(\varphi(s_n,\cdot;u_2,g),g\cdot s_n,\delta^*)$ (resp. $v_n\in M^u(\varphi(t_n,\cdot;\sigma_{a^{*}_{12}}u_{12},g),g\cdot t_n,\delta^*)\cap M^{cs}(\varphi(t_n,\cdot;u_1,g),g\cdot t_n,\delta^*)$) for $n$ sufficiently large. Similarly as the assertion between \eqref{case-A-const-1}-\eqref{backward-contracting-1}, we can also obtain that $v_n\notin M^{c}(\varphi(s_n,\cdot;u_2,g),g\cdot s_n,\delta^*)$ (resp. $v_n\notin M^{c}(\varphi(t_n,\cdot;u_1,g),g\cdot t_n,\delta^*)$). Recall that Lemma \ref{homogeneous}(a) implies $M^c(u_2,g,\delta^*)\subset\Sigma u_2$
(resp. $M^c(u_1,g,\delta^*)\subset \Sigma u_1$) for $\delta^*$ sufficiently small, the foliation statement in Remark \ref{stable-leaf}(4) entails that there is $\tilde a_n\in S^1$ such that $v_n\in M^u(\varphi(s_n,\cdot;\sigma_{\tilde a_n}u_2,g),g\cdot s_n,\delta^*)$ (resp. $v_n\in M^s(\varphi(t_n,\cdot;\sigma_{\tilde a_n}u_1,g),g\cdot t_n,\delta^*)$ ). So, by Lemma \ref{zerocenter}(2) (resp. Lemma \ref{zerocenter}(3)), we have
\begin{equation}\label{case-b2-less-1}
\begin{split}
& z(\varphi(-s_n,\cdot;v_n,g\cdot s_n)- \sigma_{\tilde a_n}u_2)\leq N_u-2\\
&(\mathrm{resp}.\ z(\varphi(-t_n,\cdot;v_n,g\cdot t_n)- \sigma_{\tilde a_n}u_1)\geq N_u+2).
\end{split}
\end{equation}
On the other hand, since $v_n\in M^s(\varphi(s_n,\cdot;\sigma_{a^{**}_{12}}u_{12},g),g\cdot s_n,\delta^*)$ (resp. $v_n\in M^u(\varphi(t_n,\cdot;\sigma_{a^{*}_{12}}u_{12},g),g\cdot t_n,\delta^*)$), one has
\begin{equation}\label{E:backward-contract-11}
\begin{split}
&\norm{\varphi(-s_n,\cdot;\sigma_{-\tilde a_n}v_n,g\cdot s_n)-\sigma_{a^{**}_{12}-\tilde a_n}u_{12}}\\
=&\norm{\varphi(-s_n,\cdot;v_n,g\cdot s_n)-\sigma_{a^{**}_{12}}u_{12}}\leq Ce^{\frac{\alpha}{2} s_n}\|v_n-\varphi(s_n,\cdot;\sigma_{a^{**}_{12}}u_{12},g)\|\le C\delta^*e^{\frac{\alpha}{2} s_n}\\
(\mathrm{resp}.\,\,\,\, &\norm{\varphi(-t_n,\cdot;\sigma_{-\tilde a_n}v_n,g\cdot t_n)-\sigma_{a^{*}_{12}-\tilde a_n}u_{12}}\\
=&\norm{\varphi(-t_n,\cdot;v_n,g\cdot t_n)-\sigma_{a^{*}_{12}}u_{12}}
\leq Ce^{-\frac{\alpha}{2} t_n}\|v_n-\varphi(t_n,\cdot;\sigma_{a^{*}_{12}}u_{12},g)\|\le C\delta^*e^{-\frac{\alpha}{2} t_n}).
\end{split}
\end{equation}
Together with \eqref{case-b2-const-1}, \eqref{E:backward-contract-11} immediately implies that
\begin{equation*}
\begin{split}
  &z(\varphi(-s_n,\cdot;v_n,g\cdot s_n)- \sigma_{\tilde a_n}u_2)=z(\varphi(-s_n,\cdot;\sigma_{-\tilde a_n}v_n,g\cdot s_n)-u_2)\\
  =&\, z(\varphi(-s_n,\cdot;\sigma_{-\tilde a_n}v_n,g\cdot s_n)-\sigma_{a^{**}_{12}-\tilde a_n}u_{12}+\sigma_{a^{**}_{12}-\tilde a_n}u_{12}-u_2)=z(\sigma_{a^{**}_{12}-\tilde a_n}u_{12}-u_2)=N_u\\
  (\mathrm{resp}.\,\,\,\, & z(\varphi(-t_n,\cdot;v_n,g\cdot t_n)- \sigma_{\tilde a_n}u_1)=z(\varphi(-t_n,\cdot;\sigma_{-\tilde a_n}v_n,g\cdot t_n)-u_1)\\
  =&\, z(\varphi(-t_n,\cdot;\sigma_{-\tilde a_n}v_n,g\cdot t_n)-\sigma_{a^{*}_{12}-\tilde a_n}u_{12}+\sigma_{a^{*}_{12}-\tilde a_n}u_{12}-u_1)=z(\sigma_{a^{*}_{12}-\tilde a_n}u_{12}-u_1)=N_u),
\end{split}
\end{equation*}
 for $n$ sufficiently large. Thus, we have obtained a contradiction to \eqref{case-b2-less-1}, which implies that Alternative {\bf (ii)} cannot occur.
\end{proof}

\noindent {\it Sub-Lemma 3: Alternative {\bf (iii)} cannot occur.}
\begin{proof}\vskip -2mm
Without loss of generality, we assume that $M_1$ is spatially-homogeneous and $M_2$ is spatially-inhomogeneous.  For any $\omega\in M_1$, we still denote by $\tilde M^u(\omega,\delta^*)$ and $\tilde M^s(\omega,\delta^*)$ the local unstable and stable manifolds of $\omega$ with respect to the Sacker-Sell spectrum $\sigma(M_1)$. Since $M_1$ is spatially-homogeneous, the first statement in \eqref{E:back-for-appro-11} implies that $\Pi^t(u_1,g)-\Pi^t(\sigma_{a} u_{12},g)\to 0$ as $t\to \infty$ for any $a\in S^1$. Due to Remark \ref{stable-leaf}(1), this means that
\begin{equation}\label{E:forward-homo-back-inhomo-1}
{ \varphi(t,\cdot;\sigma_{a}u_{12},g)\in \tilde M^{s}((\Pi^t(u_{1},g),\delta^*)\,\, \text{ for }t\gg 1.}
 \end{equation}
While in the backward time-direction,  by Lemma \ref{proximal-equivalent}, we have $\tilde \Pi^{-t}([u_2],g)-\tilde \Pi^{-t}([u_{12}],g)\to 0$ ($t\to +\infty$) implies that, there are  $a^*_{12}\in S^1$, $(u_2^*,g^*)\in M_2$ and a sequence $s_n\to-\infty$, such that
\begin{equation}\label{bnormalnega-proxi2}
\begin{split}
 \Pi^{s_n}(u_2,g)\to(u^{*}_2,g^{*}) \,\,\text{ and }\,\,
 \Pi^{s_n}(\sigma_{a^{*}_{12}}u_{12},g)\to(u^{*}_2,g^{*}).
\end{split}
\end{equation}
In the following, we will again consider two cases, that is, $\dim V^u(\O)$ is even or odd, separately.

{\it Case (A): $\dim V^u(\O)$ is even}. According to \cite[Remark 5.1(ii)]{SWZ2}, $\tilde M^u(\omega,\delta^*)=M^{cu}(\omega,\delta^*)$ and $\tilde M^s(\omega,\delta^*)=M^s(\omega,\delta^*)$ for any $\omega\in M_1$. So, \eqref{E:forward-homo-back-inhomo-1} implies that $\varphi(t,\cdot;\sigma_{a}u_{12},g)\in M^{s}(\Pi^t(u_{1},g),\delta^*)$ for all $a\in S^1$ and $t\gg 1$. So, Lemma \ref{zerocenter}(3) implies that
\begin{equation*}
   z(\varphi(t,\cdot;\sigma_au_{12},g)-\varphi(t,\cdot;u_1,g))\geq N_u+2,
 \end{equation*}
for all $a\in S^1$ and $t\gg 1$. Together with Corollary \ref{difference-lapnumber}(a), it follows that
 \begin{equation}\label{Case-A-estimate-geq}
   z(\varphi(t,\cdot;\sigma_au_{12},g)-\varphi(t,\cdot;u_1,g))\geq N_u+2\quad
   \text{ for all }t\in \mathbb{R}^1 \text{ and }a\in S^1.
 \end{equation}
For simplicity, we assume that $\Pi^{s_n}(u_1,g)\to (u^*_1,g^*)\in M_1$.
 Combining with \eqref{bnormalnega-proxi2}-\eqref{Case-A-estimate-geq}, we have
 \begin{equation}
 z(u_2^*-u_1^*)=z(\varphi(s_n,\cdot;\sigma_{a^*_{12}}u_{12},g)-\varphi(s_n,\cdot;u_1,g))\geq N_u+2,
 \end{equation}
for $n\gg 1$. Note also that $(u_i^*,g^*)\in M_i (i=1,2)$, then $N\ge N_u+2$, where $N$ is as defined in Lemma \ref{constant-ontwoset}.

On the other hand, similarly as \eqref{leq-estimate2}, one can also use \eqref{bnormalnega-proxi2} and Lemma \ref{zerocenter}(3) to obtain that
 \begin{equation}\label{Case-A-estimate-leq}
   z(\varphi(t,\cdot;\sigma_au_{12},g)-\varphi(t,\cdot;u_2,g))\leq N_u, \quad \text{ for all }t\in \mathbb{R}^1 \text{ and }a\in S^1.
 \end{equation}
Since $\Pi^t(u_1,g)-\Pi^t(u_{12},g)\to 0$ as $t\to \infty$, one can choose a subsequence $t_n\to \infty$ such that $\Pi^{t_n}(u_{12},g)\to (u^{**}_1,g^{**})\in M_1$. For this sequence $t_n\to \infty$, we can also assume that  $\Pi^{t_n}(u_{2},g)\to (u^{**}_2,g^{**})\in M_2.$ Again, by Lemma \ref{constant-ontwoset} and \eqref{Case-A-estimate-leq} , we obtain that
\begin{equation}\label{b3-small-estimate2}
N=z(u^{**}_2-u^{**}_1)=z(\varphi(t_n,\cdot;u_{12},g)-\varphi(t_n,\cdot;u_2,g))\leq N_u,
\end{equation}
a contradiction to $N\ge N_u+2$.

\vskip 3mm
{\it Case (B): $\dim V^u(\O)$ is odd.} According to \cite[Remark 5.1(i)]{SWZ2}, $\tilde M^s(\omega,\delta^*)=M^{cs}(\omega,\delta^*)$ for any $\omega\in M_1$. So, \eqref{E:forward-homo-back-inhomo-1} implies that $\Pi^t(\sigma_{a}u_{12},g)\in M^{cs}(\Pi^t(u_{1},g),\delta^*)$ for all $a\in S^1$ and $t\gg 1$.
 Thus, Lemma \ref{zerocenter}(2) implies that
\begin{equation*}
   z(\varphi(t,\cdot;\sigma_au_{12},g)-\varphi(t,\cdot;u_1,g))\geq N_u,
 \end{equation*}
for all $a\in S^1$ and $t\gg 1$. Together with Corollary \ref{difference-lapnumber}(a), this implies that
\begin{equation}\label{Case-B-estimate-geq}
   z(\varphi(t,\cdot;\sigma_au_{12},g)-\varphi(t,\cdot;u_1,g))\geq N_u, \quad\forall t\in \mathbb{R}^1,\ a\in S^1.
\end{equation}
Similarly as \eqref{leq-estimate}, it follows from \eqref{bnormalnega-proxi2}, that
\begin{equation}\label{Case-B-estimate-leq}
   z(\varphi(t,\cdot;\sigma_au_{12},g)-\varphi(t,\cdot;u_2,g))\leq N_u, \quad\forall t\in \mathbb{R}^1,\ a\in S^1.
 \end{equation}
Thus, by repeating the arguments between \eqref{Case-A-estimate-geq}-\eqref{b3-small-estimate2}, one can obtain that
 $$ z(\varphi(t,\cdot;u_{1},g)-\varphi(t,\cdot;\sigma_bu_2,g))= N_u$$
 for all $b\in S^1$ and $t\in\mathbb{R}$. As a consequence, it is also not difficult to see that
 $$ z(\varphi(t,\cdot;u_{2},g)-\varphi(t,\cdot;\sigma_bu_{12},g))= N_u$$
 for all $b\in S^1$ and $t\in\mathbb{R}$. Hence,
 \begin{equation}\label{trans-const}
   z(\varphi(t,\cdot;\sigma_au_{2},g)-\varphi(t,\cdot;\sigma_bu_{12},g))=z(\varphi(t,\cdot;u_{2},g)-\varphi(t,\cdot;\sigma_{b-a}u_{12},g))=N_u
 \end{equation}
 for all $a,b\in S^1$ and $t\in\mathbb{R}$.

By virtue of Lemma \ref{hyperbolic1} and \eqref{bnormalnega-proxi2}, there exists some $v_n\in M^{cu}(\varphi(s_n,\cdot;u_2,g),g\cdot s_n,\delta^*)\cap M^s(\varphi(s_n,\cdot;\sigma_{a^*_{12}}u_{12},g),g\cdot s_n,\delta^*)$ for $n\gg 1$. Similarly as the proof in Lemma \ref{inducedproxi}, we can also obtain $v_n\notin M^{c}(\varphi(s_n,\cdot;u_2,g),g\cdot s_n,\delta^*)$. Furthermore, by Lemma \ref{homogeneous}(a) and the foliation statement in Remark \ref{stable-leaf}(4), there is $a_n^*\in S^1$ such that $v_n\in M^u(\sigma_{a_n^*}\varphi(s_n,\cdot;u_2,g),g\cdot s_n,\delta^*)$. So, by Lemma \ref{zerocenter} (2), we have $z(v_n-\sigma_{a_n^*}\varphi(s_n,\cdot;u_2,g))\leq N_u-2$; and hence, Corollary \ref{difference-lapnumber}(a) implies that
 \begin{equation}\label{case-b3-less-1}
   z(\varphi(-s_n,\cdot;v_n,g\cdot s_n)-\sigma_{a_n^*}u_2)\leq N_u-2.
 \end{equation}
On the one hand, by Lemma \ref{zero-cons-local} and the compactness of $S^1$, there exists $\delta>0$ (independent of $a,b\in S^1$) such that for any $v\in X$ with $\|v\|<\delta$, one has
 \begin{equation}\label{case-b3-const-1}
z(\sigma_au_2-\sigma_bu_{12}+v)=N_u.
\end{equation}
On the other hand, since $v_n\in M^s(\sigma_{a^*_{12}}\varphi(s_n,\cdot;u_2,g),g\cdot s_n,\delta^*)$, by Remark \ref{stable-leaf}(1),
\begin{equation*}
\begin{split}
\norm{\varphi(-s_n,\cdot;v_n,g\cdot s_n)-\sigma_{a^{**}_{12}}u_{12}} \leq Ce^{\frac{\alpha}{2} s_n}\|v_n-\varphi(s_n,\cdot;\sigma_{a^{**}_{12}}u_{12},g)\|\le C\delta^*e^{\frac{\alpha}{2} s_n}.\\
\end{split}
\end{equation*}
It entails that $\|\varphi(-s_n,\cdot;v_n,g\cdot s_n)-\sigma_{a^{**}_{12}}u_{12}\|<\delta$ for $n$ sufficiently large. As a consequence, \eqref{case-b3-const-1} implies that $z(\varphi(-s_n,\cdot;v_n,g\cdot s_n)-\sigma_{a_n^*}u_2)=z(\varphi(-s_n,\cdot;v_n,g\cdot s_n)-\sigma_{a^{**}_{12}}u_{12}+\sigma_{a^{**}_{12}}u_{12}-\sigma_{a_n^*}u_2)=z(\sigma_{a^{**}_{12}}u_{12}-\sigma_{a_n^*}u_2)$. So, by \eqref{trans-const}, one obtain that $z(\varphi(-s_n,\cdot;v_n,g\cdot s_n)-\sigma_{a_n^*}u_2)=N_u$, a contradiction to \eqref{case-b3-less-1}.
\end{proof}

In summary, we have deduced certain contradiction in each of the above three sub-lemmas, which enables us to complete the proof of the fact that case-(iii) in Lemma \ref{gen-hyper2} can not happen. In other words,  we have proved the first statement of Theorem \ref{norma-hyper}(i), that is, there is a minimal set $M\subset \O$ of $\Pi^t$ such that $\O\subset \Sigma M$.

\vskip 3mm
As for the proof of the remaining part in Theorem \ref{norma-hyper}(i), we first claim that $M$ here is spatially-inhomogeneous (Otherwise, $\O\subset \Sigma M=M$, which implies that $\O=M$. Hence, by {\bf (H1)}, Lemma \ref{hyperbolic-minimal}(2) entails that ${\rm dim}V^u(\O)={\rm dim}V^u(M)=0$, a contradiction). So, we can repeat the same argument from the third paragraph of the proof Theorem \ref{structure-thm} to the end of Theorem \ref{structure-thm}. As a matter of fact, one can even obtain that $\hat{M}$ is a $1$-cover, because $\tilde{M}$ is a $1$-cover in Theorem \ref{norma-hyper}.
So,  for each $g\in H(f)$ (instead of just $g\in H_0(f)$), we can obtain all the statements from \eqref{E:u-g--base-trn1}-\eqref{circle-flow-eq41}. In particular, $u_{g\cdot t}(x)$ in \eqref{E:rotation-spiral1} is almost-periodic in $t$ uniformly in $x$; and the function $\dot {c}^g(t)=G(t,c^g(t))$ is time almost-periodic if $f$ is uniformly almost periodic in $t$. Thus, we have naturally induces
an almost-periodically forced skew-product flow on $\mathcal{S}^1\times H(f)$, which completes the proof of Theorem \ref{norma-hyper}.
\end{proof}

%%%%%%%%%%%%%%%%%%

\subsection{Proof of Theorem \ref{hyperbolic0}}\label{Proof-Thm5.3}
In this subsection, we will prove Theorem \ref{hyperbolic0}. We point out that,
  if $\dim V^u(\Omega)=0$ in Theorem \ref{hyperbolic0}, then $\Omega$ is uniformly stable because {\bf (H0)} holds. Then it follows from \cite[Theorem II.2.8]{Shen1998} that $\Omega$ is a uniformly stable minimal set. Moreover, by \cite[Theorem 4.1]{SWZ}, $\Omega$ is spatially-homogeneous minimal set and $1$-cover of the base $H(f)$. Thus, in the remaining part of this section we always assume that ``{\it {\bf (H0)} holds with $\dim V^u(\Omega)>0$."}

\begin{proof}[Proof of Theorem \ref{hyperbolic0}]
By Remark \ref{invari-space}, any minimal set $M\subset\O$ is hyperbolic. So, Lemma \ref{hyperbolic-minimal}(1) entails that $M$ is a spatially-homogeneous $1$-cover. In particular, $\Sigma M=M$.  By virtue of the statement in the beginning of proof of Theorem \ref{structure-thm}, one knows that Theorem \ref{structure-thm} still holds under {\bf (H0)}. As a consequence, one of the following must hold for $\Omega$:
\begin{itemize}
\item[{ \rm (i)}] $\Omega$ is a minimal invariant set.
\item[{ \rm (ii)}] $\Omega=M_1\cup M_{11}$, where $M_1$ is minimal, $M_{11}\neq \emptyset$, $M_{11}$ connects $M_1$ in the sense that if $(u_{11},g)\in M_{11}$, then $M_1\subset\omega(u_{11},g)\cap\alpha (u_{11},g)$.
\item[{ \rm (iii)}] $\Omega=M_1\cup M_2\cup M_{12}$, where $M_1$, $M_2$ are minimal sets, $M_{12}\not =\emptyset$, and for any $u_{12}\in M_{12}$, either  $M_1\subset \omega(u_{12},g)$ and $M_2\cap \omega(u_{12},g)=\emptyset$, or $M_2\subset \omega(u_{12},g)$ and $M_1\cap \omega(u_{12},g)=\emptyset$, or $M_1\cup M_2\subset \omega(u_{12},g)$ (and analogous for $\alpha(u_{12},g)$).
\end{itemize}

We only need to prove that neither (ii) nor (iii) can occur. In fact, when (ii) holds, let $\{(u_1,g)\}=M_1\cap p^{-1}(g)$. Choose any $(u_{11},g)\in M_{11}$. It then turns out that $\{(u_1,g),(u_{11},g)\}$ is a two sided proximal pair, which contradicts to Lemma \ref{hyperbolic2}.

When (iii) holds, then $\Omega=M_1\cup M_2\cup M_{12}$.  Let $\{(u_i,g)\}=M_i\cap p^{-1}(g)$ for $i=1,2$ and any $g\in H(f)$.
Given any $(u_{12},g)\in M_{12}$, Lemma \ref{hyperbolic2} implies that neither $\{(u_1,g),(u_{12},g)\}$ nor $\{(u_1,g),(u_{12},g)\}$ forms a two sided proximal pair. Therefore, without loss of generality, we may assume that $\omega(u_{12},g)\cap M_1\neq \emptyset$, $\alpha(u_{12},g)\cap M_2\neq \emptyset$. Consequently, it is easy to see that $\Pi^{t}(u_{12},g)-\Pi^{t}(u_{1},g)\to 0$ (resp. $\Pi^{t}(u_{12},g)-\Pi^{t}(u_{2},g)\to 0$) as $t\to\infty$ (resp. $t\to -\infty$). By Remark \ref{stable-leaf}(1), we have $\varphi(t,\cdot;u_{12},g)\in M^s((\Pi^t(u_{1},g),\delta^*)$ (resp. $\varphi(t,\cdot;u_{12},g)\in M^u((\Pi^t(u_{2},g),\delta^*)$) for $t\gg 1$ (resp. $t\ll-1$). Since {\bf (H0)} holds and $\dim V^u(\Omega)>0$, it follows from Lemma \ref{hyperbolic2} that $\dim V^u(\Omega)$ should be odd. As a consequence, by Lemma \ref{zerocenter}(1), one has
 \begin{equation*}
   z(\varphi(t,\cdot;u_{12},g)-\varphi(t,\cdot;u_1,g))\geq N_u\ge 2, \quad t\gg 1.
 \end{equation*}
Together with Corollary \ref{difference-lapnumber}(a), this implies that
\begin{equation}\label{positive-zero-estima}
   z(\varphi(t,\cdot;u_{12},g)-\varphi(t,\cdot;u_1,g))\geq N_u, \quad \forall t\in \mathbb{R}^1.
 \end{equation}
Noticing that both $M_1$ and $M_2$ are spatially-homogenous, it is easy to see that $z(\varphi(t,\cdot;u_1,g)-\varphi(t,\cdot;u_2,g))=0$ for any $t\in \mathbb{R}$. However, let $t_n\to -\infty$ be such that $\Pi^{t_n}(u_{12},g)\to (u_2,g)$ and $\Pi^{t_n}(u_{1},g)\to(u_1,g)$ as $n\to\infty$. Then Lemma \ref{zero-cons-local} implies that there is $N\in\mathbb{N}$ such that, for any $n>N$, one has
 \begin{equation}
   z(u_1-u_2)=z(\varphi(t_n,\cdot;u_{12},g)-\varphi(t_n,\cdot;u_1,g)).
 \end{equation}
So, by \eqref{positive-zero-estima}, $z(u_1-u_2)\geq N_u\geq 2$, a contradiction. Thus, we have completed the proof of this theorem.
\end{proof}

%%%%%%%%%%%%%%%%%%%

\section{Appendix}

In this Appendix, we will present an example to illustrate that, for the time almost-periodic cases, one can not expect that any omega-limit set is imbedded into an almost periodically forced circle flow on $S^1$.
Compared with the time periodic cases discussed in \cite[Theorem 1]{SF1}, this reveals that there are essential differences between time-periodic cases and non-periodic cases.

\vskip 2mm
Consider the following parabolic equation:
\begin{equation}\label{examp1}
  u_t=u_{xx}+u_x+(f(t)+1)u,\,\,t>0,\,x\in S^{1}=\mathbb{R}/2\pi \mathbb{Z},
\end{equation}
where $f(t)=-\sum_{k=1}^{\infty}2^{-k}\pi \mathrm{sin}(2^{-k}\pi t)$ is an almost periodic function.

The skew-product semiflow $\Pi^t$ on $X\times H(f)$ is
\begin{equation}
  \Pi^t(u,g)=(\varphi(t,\cdot; u,g),g\cdot t),
\end{equation}
where $X$ is the fractional power space defined in the introduction. Let $u_0=\sin x$, then $\varphi(t,\cdot; u_0,f)=e^{\int_{0}^t f(s) ds}\sin(x+t)$ is the solution of \eqref{examp1} with the initial value $\varphi(0,\cdot; u_0,f)=u_0$.

 Following the discussion in \cite{Sacker77,ShenYi-2}, the function $\phi(t)=e^{\int_{0}^t f(s) ds}$ satisfies the following properties:

(a) $\phi(t)$ is bounded for $t\geq 0$;

(b) There exists $t_n\to\infty$ such that $\phi(t_n)\to 0$ as $n\to\infty$, and $\phi(2^n)\geq e^{-2\pi-2}$ for $n=1,2,\cdots$;

(c) For any sequence $t_n\to\infty$ such that $\lim_{n\to\infty}\phi(t+t_n)=\phi^*(t)$ exists, $\phi^*(t)$ is not almost periodic if it is nonzero.

By virtue of (a)-(c), the $\omega$-limit set $\omega(u_0,f)$ is not minimal, and $M=\{0\}\times H(f)$ is the unique minimal set contained in $\omega(u_0,f)$. Moreover, $\omega(u_0,f)$ is an almost $1$-cover of $H(f)$ (see, e.g. the similar argument in \cite[p.396]{ShenYi-2}).

Let $\omega(\phi(0),c(0),f)$ be the $\omega$-limit set of the flow $\{(\phi(t),c(t),f\cdot t)\subset \mathbb{R}\times S^1\times H(f): t\in \mathbb{R}\}$, where the function $t\mapsto c(t):=t\text{ (mod }2\pi)\in S^1$. Then, for any $(u,g)\in \omega(u_0,f)$, one has $u=\phi^*_g\sin(x+c^*_g)$ with $(\phi^*_g,c^*_g,g)\in \omega(\phi(0),c(0),f)$. Therefore, whenever $(u,g)\in \omega(u_0,f)\setminus M=\omega(u_0,f)\setminus (\Sigma M)$, we have $\phi^*_g\neq 0$; and hence, $u=\phi^*_g\sin(x+c^*_g)$ is spatially-inhomogeneous. Moreover, let $H_1(f):=\{g\in H(f):\text{there exists some } (u,g)\in \omega(u_0,f)\setminus M\}$. Then, for any $g\in H_1(f)$, there does not exist $u_g\in X$ such that $\omega(u_0,f)\cap p^{-1}(g)\subset (\Sigma u_g,g)$, where $\Sigma u_g$ is the $S^1$-group orbit of $u_g$ defined in \eqref{E:group-orbit-11}.
As a consequence, we have:

\vskip 3mm
\noindent $\bullet$ {\it $\omega(u_0,f)$ cannot be imbedded into an almost periodically forced circle flow on $S^1$.}
\vskip 3mm

Moreover, we have the some further observations:\vskip 2mm

\noindent {\bf Remark A.1.} (i) The Sacker-Sell spectrum of $\omega(u_0,f)$ is $\sigma(\omega(u_0,f))=\{1,0,\cdots,1-k^2,\cdots\}$ with $\dim V^c(\omega(u_0,f))=2$ and $\dim V^u(\omega(u_0,f))=1$.

(ii) $\omega(u_0,f)$ is neither spatially-homogeneous nor spatially-inhomogeneous.

(iii) This example also reveals that, even if $\dim V^c(\O)=2$ and $\dim V^u(\Omega)$ is odd,
 $\O\subset \Sigma M$ (see Theorem \ref{structure-thm}(i)) does not always hold. As a matter of fact, this example satisfies Theorem \ref{structure-thm}(ii).

\end{document}